\newcommand{\indic}{1\negthickspace\text{I}}
\newcommand{\eps}{\varepsilon}
\newcommand{\fhi}{\varphi}
\newcommand{\dd}{\text{d}}
\DeclareMathOperator{\spec}{Spec}
\DeclareMathOperator{\dha}{d_H}\DeclareMathOperator{\diag}{diag}
\DeclareMathOperator{\Span}{Span}\DeclareMathOperator{\id}{Id}
\DeclareMathOperator{\ch}{ch}\DeclareMathOperator{\cl}{cl}
\numberwithin{equation}{section}
\newtheorem{Theorem}{Theorem}[section]
\newtheorem{Lemma}[Theorem]{Lemma}
\newtheorem{Proposition}[Theorem]{Proposition}
 { \theoremstyle{definition}
\newtheorem{Example}[Theorem]{Example}}
\begin{document}
\allowdisplaybreaks

\newcommand{\arXivNumber}{1803.06537}

\renewcommand{\PaperNumber}{085}

\FirstPageHeading

\ShortArticleName{Renormalization of the Hutchinson Operator}
\ArticleName{Renormalization of the Hutchinson Operator}

\Author{Yann DEMICHEL}
\AuthorNameForHeading{Y.~Demichel}

\Address{Laboratoire MODAL'X - EA3454, Universit\'{e} Paris Nanterre,\\ 200 Avenue de la R\'{e}publique, 92000 Nanterre, France}
\Email{\href{mailto:yann.demichel@parisnanterre.fr}{yann.demichel@parisnanterre.fr}}
\URLaddress{\url{https://www.parisnanterre.fr/m-yann-demichel--699219.kjsp}}

\ArticleDates{Received March 20, 2018, in final form August 10, 2018; Published online August 16, 2018}

\Abstract{One of the easiest and common ways of generating fractal sets in ${\mathbb R}^D$ is as attractors of affine iterated function systems (IFS). The classic theory of IFS's requires that they are made with contractive functions. In this paper, we relax this hypothesis considering a new operator $H_\rho$ obtained by renormalizing the usual Hutchinson operator $H$. Namely, the $H_\rho$-orbit of a given compact set $K_0$ is built from the original sequence $\big(H^n(K_0)\big)_n$ by rescaling each set by its distance from $0$. We state several results for the convergence of these orbits and give a geometrical description of the corresponding limit sets. In particular, it provides a way to construct some eigensets for $H$. Our strategy to tackle the problem is to link these new sequences to some classic ones but it will depend on whether the IFS is strictly linear or not. We illustrate the different results with various detailed examples. Finally, we discuss some possible generalizations.}

\Keywords{Hutchinson operator; iterated function system; attractor; fractal sets}

\Classification{28A80; 37C70; 47H10; 37C25; 37E05; 15A99}

\section{Introduction and notation}\label{sec:intro}

The theory and the use of fractal objects, introduced and developed by Mandelbrot (see, e.g.,~\cite{mand1}), still play an important role today in scientific areas as varied as physics, medicine or finance (see, e.g.,~\cite{falco} and references therein). Exhibit theoretical models or solve practical problems requires to produce various fractal sets. There is a long history of generating fractal sets using Iterated Function Systems. After the fundamental and theo\-re\-ti\-cal works by Hutchinson (see \cite{hutch1}), this method was popularized and developed by Barnsley in the 80s (see~\cite{barn, bardem}). Since these years very numerous developments and extensions were made (see, e.g.,~\cite{barw}) making even more enormous the literature related to these topics. Indeed, the simplicity and the efficiency of this approach have contributed to its success in a lot of domains, notably in image theory (see, e.g.,~\cite{fish}) and shape design (see, e.g.,~\cite{gent}).

\subsection{Background}\label{subsec:background}

Let us recall the mathematical context and give the main notation used throughout the paper. Let $(M,\dd)$ be a metric space. For any map $f\colon M \to M$, we define the $f$-orbit of a point $x_0\in M$ as the sequence $(x_n)_n$ given by
\begin{gather*}\label{defi:orbit}
x_n = (f\circ \cdots \circ f)(x_0) =f^n(x_0),
\end{gather*}
where $f^n$ is the $n$th iterate of $f$ with the convention that~$f^0$ is the identity function~$\id$. In particular, one has $x_{n+1}=f(x_n)$ hence, if $f$ is continuous and if $(x_n)_n$ converges to $z\in M$, then~$z$ is an invariant point for~$f$, i.e., $f(z)=z$.

We denote by ${\cal K}_M$ the set of all non-empty compact subsets of $M$. We obtain a metric space endowing it with the Hausdorff metric $\dha$ defined by
\begin{gather*}\label{def:dh}
\forall\,K,K'\in {\cal K}_M,\qquad \dha(K,K')=\inf\big\{\eps>0 \,|\, K\subset K'(\eps) \text{ and } K'\subset K(\eps)\big\},
\end{gather*}
where $K(\eps)$ is the set of points at a distance from $K$ less than $\eps$.

For every $K\subset M$ we define the set $f(K)=\{f(x) \colon x\in K\}$ and we will assume in the sequel that $f(K)\in {\cal K}_M$ if $K\in{\cal K}_M$.

Let us consider $p\geqslant1$ maps $f_1,\ldots,f_p$ with $f_i\colon M\to M$. Then we can define a new map $H\colon {\cal K}_M \to {\cal K}_M$ setting
\begin{gather*}
\forall\,K\in {\cal K}_M, \qquad H(K)= \bigcup_{i=1}^p f_i(K).
\end{gather*}
We say that $H$ is the \textit{Hutchinson operator} associated with the \textit{iterated function system} (IFS in short) $\{f_1,\ldots,f_p\}$ (see, e.g., \cite{barn,falco, hutch1}).

Basic questions about an IFS are the following: Does the orbit $\big(H^n(K_0)\big)_n$ converge for any compact set $K_0$? Does its limit depend on $K_0$? What are the geometrical properties of the limit sets?

The classic theory of IFS's is based on the contractive mapping principle (see, e.g., \cite{barn,falco, hutch1}). Let us recall that a map $f\colon M\rightarrow M$ is contractive if
\begin{gather*}\label{eq:cont}
\lambda_f = \sup \left\{\frac{\dd(f(x),f(y))}{\dd(x,y)}\colon x,y\in M \text{ with } x\neq y \right\} < 1.
\end{gather*}

Let us assume that $(M,\dd)$ is a complete metric space. Then, any contractive map is continuous, has a unique invariant point $z\in M$, and the $f$-orbit of any $x_0\in M$ converges to $z$ with the basic estimate
\begin{gather*}\label{eq:estimrate}
\forall\,n\geqslant0,\qquad \dd\big(f^n(x_0),z\big)\leqslant \lambda_f^n \dd(x_0,z).
\end{gather*}

If $f_1,\ldots,f_p$ are contractive then the associated Hutchinson operator $H$ is also contractive because of
\begin{gather*}\label{eq:contH}
\lambda_H= \max_{1\leqslant i \leqslant p}\{\lambda_{f_i}\}.
\end{gather*}
Since $({\cal K}_M,\dha)$ inherites the completeness of $(M,\dd)$, the map $H$ has then a unique invariant point $L\in {\cal K}_M$, called the \textit{attractor} of $H$, and for all $K_0\in {\cal K}_M$ the sequence $\big(H^n(K_0)\big)_n$ converges to $L$. One of the interests is that such sets $L$ are generally fractal sets.

In the sequel, the space $M$ will be essentially ${\mathbb R}^D$, $D\geqslant1$, endowed with the metric induced by the Euclidean norm $\|\cdot\|$. Writing simply ${\cal K}$ for ${\cal K}_M$, a subset $K\subset {\mathbb R}^D$ belongs to~${\cal K}$ if and only if it is closed and bounded. In particular, the closed ball with center $x\in {\mathbb R}^D$ and radius $r>0$ will be denoted by $B(x,r)$.

In this paper, we are interested in affine IFS's, i.e., when $f_i$ is defined by $f_i(x)= A_ix + b_i$ with $A_i$ a $D\times D$ matrix and $b_i\in{\mathbb R}^D$ a vector. Such a map satisfies $\lambda_{f_i}=\|A_i\|$ where $\|A_i\|$ is the norm of $A_i$ given by
\begin{gather*}\label{def:normmatrix}
\|A_i\| = \sup\big\{\|A_ix\| \colon x\in {\mathbb R}^D \text{ with } \|x\|=1\big\} = \inf\big\{r>0\,|\,\forall\,x\in {\mathbb R}^D,\, \|A_ix\|\leqslant r\|x\|\big\}.
\end{gather*}
In particular, classic IFS's consist of transformations involving rotations, symmetries, scalings and translations. In this case, if $H$ is contractive, the corresponding attractor $L$ is called a~self-affine set.
One obtains a nice subclass of such IFS's when the $f_i$'s are homotheties, i.e., when $f_i(x)=\alpha_i x + b_i$ with $\alpha_i\geqslant 0$. Indeed, contrarily to general affine maps, $f_i$ contracts the distances with the same ratio $\alpha_i$ in all directions. This enables a precise description of $L$. For example, if the sets $f_i(K_0)$ are mutually disjoints then $L$ is a Cantor set whose fractal dimension is the solution of a very simple equation (see \cite{falco,perso}). Cantor sets are fundamental and come naturally when one studies IFS's. A simple family of Cantor sets in ${\mathbb R}$ is $\big\{\Gamma_a \colon 0<a<\frac1{2}\big\}$ where $\Gamma_a$ is the attractor of the IFS $\{f_1,f_2\}$ with $f_1(x)=ax$ and $f_2(x)=ax+(1-a)$. For example, $\Gamma_{\frac1{3}}$ is the usual triadic Cantor set (see~\cite{edgar,falco, hutch1}). When $\frac1{2}\leqslant a<1$, the attractor of the previous IFS becomes the whole interval $[0,1]$. These basic examples will be extensively used in the sequel.

\subsection{Motivation}\label{subsec:motivation}

Let us point out two specific situations:
\begin{enumerate}\itemsep=0pt
\item[--] When $\lambda_H\geqslant 1$ the previous results become false: typical orbits fail to converge. Basically, the orbits of some points $x_0\in K_0$ may then satisfy $\|f_i^n(x_0)\|\to\infty$ for some $i$, preventing the sequence $\big(H^n(K_0)\big)_n$ from being bounded.
\item[--] When all the $f_i$'s are contractive linear maps, the attractor of $H$ is always $\{0\}$ so does not depend on the fine structure of the $A_i$'s but only on their norms.
\end{enumerate}

However, in these two degenerate situations we can observe an intriguing geometric structure of the sets $H^n(K_0)$. For example, let us consider the IFS $\{f_1,f_2\}$ where the $f_i\colon {\mathbb R}^2\to {\mathbb R}^2$ are the linear maps given by their canonical matrices
\begin{gather*} A_1 = \left[\begin{matrix}
a & a \\
a & 0
\end{matrix}\right] \qquad \text{and}\qquad
A_2 = \left[ \begin{matrix}
a & -a \\
-a & 0
\end{matrix}\right]\end{gather*}
with $a>0$. We focus on the $H$-orbit of the unit ball $B(0,1)$. For all $a$ large enough we have $\|A_1\|=\|A_2\|>1$ and the sequence $\big(H^n(B(0,1))\big)_n$ is not bounded: the diameter $d_n$ of $H^n(B(0,1))$ grows to infinity. At the contrary, for all $a$ small enough we have $\|A_1\|=\|A_2\|<1$. Thus $H$ is now contractive and $\big(H^n(B(0,1))\big)_n$ converges to $\{0\}$: $d_n$ vanishes to $0$. Nevertheless, whatever is $a$, one can observe that the sets $H^n(B(0,1))$ tend to a same limit shape looking like a `sea urchin'-shaped set (see Fig.~\ref{fig:oursin}). So one can wonder if there exists a critical value $\overline{a}$ for which $d_n$ do not degenerate so makes possible to observe this asymptotic set.
\begin{figure}[!h]\centering
\begin{minipage}{52mm}\centering
\includegraphics[scale=0.20]{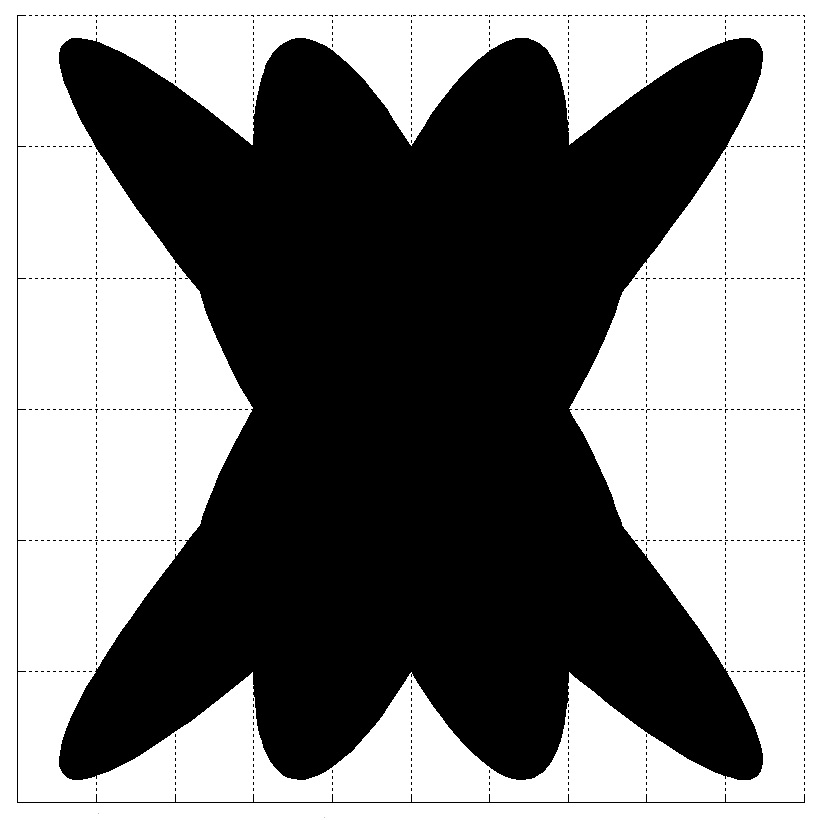}\\
{\footnotesize (a) $H^2(B(0,1))$}
\end{minipage}\
\begin{minipage}{52mm}\centering
\includegraphics[scale=0.20]{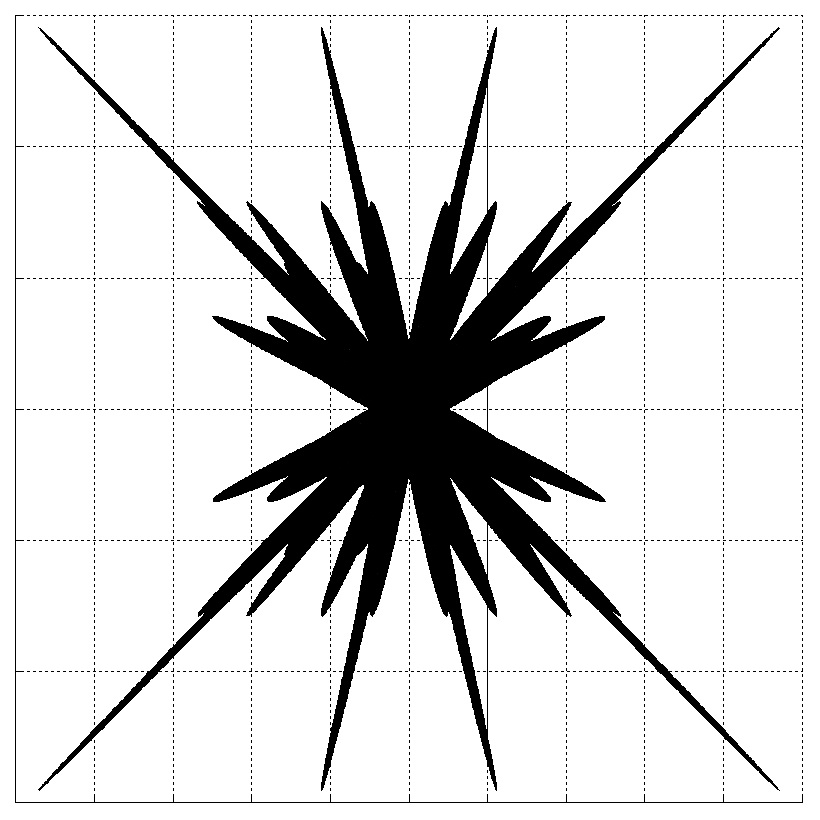}\\
{\footnotesize (b) $H^5(B(0,1))$}
\end{minipage}\
\begin{minipage}{52mm}\centering
\includegraphics[scale=0.20]{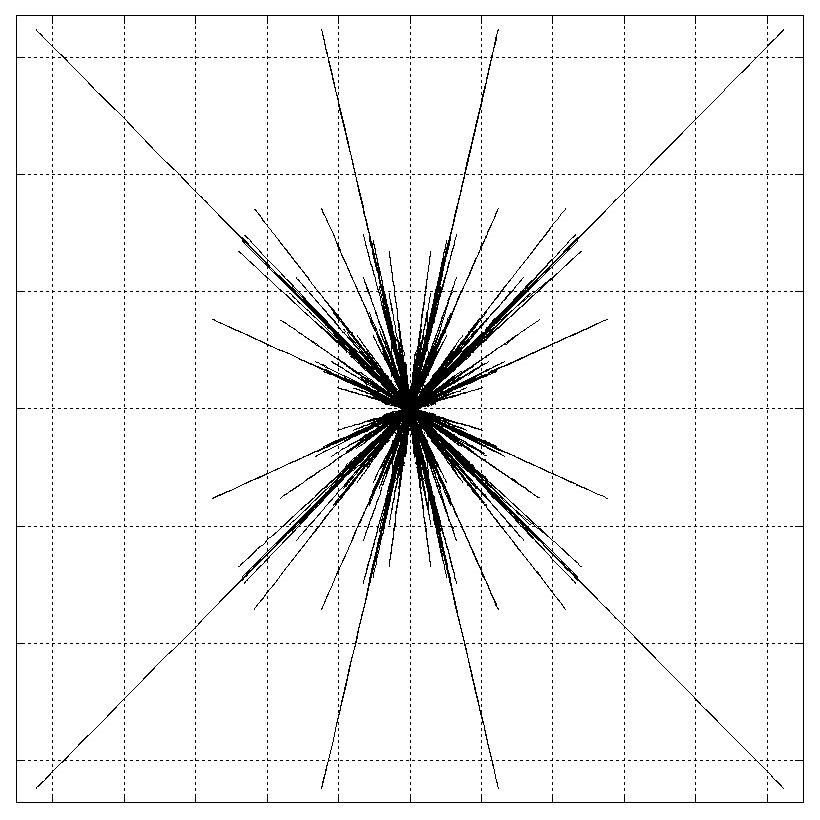}\\
{\footnotesize (c) $H^{10}(B(0,1))$}
\end{minipage}
\caption{Three sets of the $H$-orbit of $B(0,1)$ where $H$ is the Hutchinson operator associated with the IFS $\{f_1,f_2\}$. Maps $f_1$, $f_2$ are given above. Since the maps are linear, changing the parameter~$a$ gives the same sets for each~$n$ up to a scaling factor. Thus an adequate renormalization should reveal a common asymptotic limit-shape.}\label{fig:oursin}
\end{figure}

In this paper, we aim to modify the original Hutchinson operator to annihilate these two degenerate behaviors. We wish to obtain a limit set even if the IFS is not contractive, and a non zero limit set for contractive linear IFS's. Moreover, we would like this new operator to exhibit the typical `limit shape' observed above.

\subsection{Renormalization with the radius function}\label{subsec:strategy}

Our strategy is to rescale each set $H^n(K_0)$ by dividing it by its size. The idea of rescale a~sequence of sets $(K_n)_n$ to get its convergence to a non degenerate compact limit is not new and is particularly used in stochastic modeling (see, e.g., \cite{cox81, rich73} for famous examples of random growth models and more recently \cite{legall07, marck06} in the context of random graphs and planar maps). Probabilists usually consider the \textit{a posteriori} rescaled sets $\frac1{d_n}K_n$ where $d_n$ estimates the size of~$K_n$, often its diameter.

Here we proceed differently. First, in order to keep dealing with the orbit of an operator, we will do an \emph{a priori} renormalization. Secondly, we will measure the size of a compact set with its distance from $0$. Precisely, we consider the radius function $\rho$ defined on ${\cal K}$ by
\begin{gather*}
\forall\,K\in{\cal K},\qquad\rho(K)=\sup\{\|x\| \colon x\in K\}
\end{gather*}
and we denote by $H_{\rho}$ the operator defined by
\begin{gather}\label{def:normhutch}
\forall\,K\in{\cal K},\qquad H_{\rho}(K)= \frac1{\rho(H(K))} H(K).
\end{gather}

The radius function $\rho$ satisfies the three following basic properties:
\begin{enumerate} \itemsep=0pt
\item[--] continuity: $\rho$ is continuous with respect to $\dha$;
\item[--] monotonicity: If $K\subset K'$ then $\rho(K)\leqslant \rho(K')$;
\item[--] homogeneity: For all $\alpha\in{\mathbb R}$, $\rho(\alpha K)= |\alpha|\rho(K)$.
\end{enumerate}

Actually $\rho$ is a very nice function because it enjoys an additional stability property:
\begin{gather}\label{eq:rhostab}
\forall\,K,K'\in {\cal K}, \qquad \rho(K\cup K') = \max\{\rho(K),\rho(K')\}.
\end{gather}

The subject of interest of the paper is then the $H_{\rho}$-orbit of sets $K_0\in{\cal K}$. For simplicity, we will write in the sequel $K_n = H_{\rho}^n(K_0)$ so that
\begin{gather}\label{def:normhutchiter}
\forall\,n\geqslant 0,\qquad K_{n+1}= \frac1{d_n}\bigcup_{i=1}^p f_i(K_n)
\end{gather}
with
\begin{gather}\label{eq:defdn}
d_n=\rho\left(\bigcup_{i=1}^p f_i(K_n)\right)= \max_{1\leqslant i \leqslant p} \rho(f_i(K_n)).
\end{gather}

We will assume that $d_n>0$, i.e., $K_n\neq \{0\}$.

Observe that $\rho(K_n)=1$ for all $n\geqslant1$, thus:
\begin{enumerate}\itemsep=0pt
\item[--] $K_n\subset B(0,1)$ so that the orbit of any set $K_0$ is bounded;
\item[--] there exist at least one $x_n\in K_n$ such that $\|x_n\|=1$ so that $(K_n)_n$ cannot vanish to $\{0\}$.
\end{enumerate}
In particular, if $(K_n)_n$ converges to a set $K$ then $\rho(K)=1$ and $K\neq\{0\}$.

This new operator $H_{\rho}$ is then a good candidate to solve the problems discussed in Section \ref{subsec:motivation}. It will act by freezing the geometrical structure of $H^n(K_0)$ at each step $n$ of the construction of the orbit.

\subsection{Eigen-equation problem}\label{subsec:eigenequation}

Let us point out a very strong connection with the `eigen-equation problem' recently studied in~\cite{barvin11} for affine IFS's. Indeed, if $(K_n)_n$ converges to a set $K$ then $(d_n)_n$ converges to $d>0$ and taking the limit in \eqref{def:normhutchiter} leads to $H(K)=d K$. Hence $d$ is an eigenvalue of $H$ and $K$ a corresponding eigenset. Existence of solutions for this equation is discussed and proved in~\cite{barvin11}. The values for~$d$ are closely related to the joint spectral radius~$\sigma_{{\cal M}}$ of the~$A_i$'s (see~\eqref{eq:JSR}). In particular, for linear IFS's, $\sigma_{{\cal M}}$ was interpreted as a transition value for which exists a~corresponding eigenset~$K$ whose structure is similar to the one described in Section~\ref{subsec:motivation}. Unfortunately, these results don't hold for every IFS. In particular it rules out simple IFS's only made up with homotheties or some more interesting ones made up with stochastic matrices. However, the results stated in~\cite{barvin11} provide important clues to determine and study the possible limits of both sequences~$(d_n)_n$ and~$(K_n)_n$.

When studying the eigen-equation problem, an interesting question is to approximate any couple $(d,K)$ of solutions of equation $H(K)=dK$. Let us look at the special case when the IFS consists in only one linear map with matrix $A$ and set $K_0=\{x_0\}$. Then $K_n=\{x_n\}$ with
\begin{gather*}\label{def:poweriter}
\forall\,n\geqslant 0,\qquad x_{n+1}= \frac1{\|A x_n\|}\,A x_n.
\end{gather*}
One recognizes the famous \textit{power iteration algorithm}. With suitable assumptions it gives a~simple way to approximate the unit eigenvector associated with the dominant eigenvalue $\sigma_{{\cal M}}$ of $A$, this eigenvalue being the limit of $d_n= \|A x_n\|$. Therefore, iterating the operator $H_{\rho}$ from a~set~$K_0$ is nothing but a generalization of this algorithm and then provides a natural procedure to approximate both an eigenvalue of~$H$ and one of its associated eigenset.

From now on we are then interested in the convergence of $(K_n)_n$ and the geometric properties of its limit. Typically, $H_{\rho}$ is not contractive and the classic theory may not be applied. In particular, $H_{\rho}$ may have different invariant points so that the limit of $(K_n)_n$ may be no longer unique but deeply depend on $K_0$. Furthermore, it is clear that the $H_\rho$-orbits of $K_0$ may diverge for some $K_0$ (for example when the $A_i$'s are only rotations). We will expose different ways to state the convergence of $(K_n)_n$ depending on whether the IFS is affine (Section~\ref{sec:affine}) or strictly linear (Section~\ref{sec:linear}). Finally, some generalizations will be shown in the last section (Section~\ref{sec:general}).

\section{Results for affine IFS's}\label{sec:affine}

We suppose in this section that the IFS consists in $p\geqslant1$ affine maps $f_i\colon {\mathbb R}^D\rightarrow {\mathbb R}^D$ defined by $f_i(x)=A_i x+b_i$. We denote by ${\cal M}=\{A_1,\ldots,A_p\}$ the set of their canonical matrices. Let us recall that the joint spectral radius of ${\cal M}$ is defined by
\begin{gather}\label{eq:JSR}
\sigma_{{\cal M}} = \limsup_{n\to\infty} \big(\sup_{1\leqslant i_1,\ldots,i_n\leqslant p}\{\alpha(A_{i_1}\cdots A_{i_n})\}\big)^{\frac1 n}
= \limsup_{n\to\infty} \big(\sup_{1\leqslant i_1,\ldots,i_n\leqslant p}\{\|A_{i_1}\cdots A_{i_n}\|\}\big)^{\frac1 n},\!\!\!
\end{gather}
where $\alpha(M)$ denotes the usual spectral radius of the matrix $M$ (see \cite{theys}). Finally, we denote by $\spec(M)$ the set of the eigenvalues of $M$ so that $\alpha(M)=\max\{ |\alpha| \colon \alpha\in\spec(M)\}$.

\subsection{Strategy: a general result}\label{sec:mainsize}

Our strategy consists in linking the convergence of $(K_n)_n$ to the asymptotic behavior of the sequence of positive numbers $(d_n)_n$. If $(K_n)_n$ converges to a set $K$ then $(d_n)_n$ converges to $d>0$ and the eigen-equation $H(K)=dK$ shows that $K$ may be seen as an invariant set of the classical Hutchinson operator $H_d=\frac1{d}H$ associated with the IFS $\{\frac1{d}f_1,\ldots,\frac1{d}f_p\}$. In particular, if $d>\lambda_H$ then $K$ is unique: it is the attractor $L_d$ of this contractive operator $H_d$.

Conversely, if $(d_n)_n$ is a constant sequence, say $d_n=d$, one has $K_n=H_d^n(K_0)$ so that $(K_n)_n$ converges to $L_d$ if $d>\lambda_H$. Actually, when $(d_n)_n$ is no longer constant, but converges to a~positive number $d>\lambda_H$, then the convergence of $(K_n)_n$ to $L_d$ still happen.

\begin{Theorem}\label{theo:renorm}
Let $K_0\in{\cal K}$. Assume that the sequence $(d_n)_n$ converges to a positive number $d>\lambda_H$. Then the sequence $(K_n)_n$ converges to the attractor $L_d$ of the $\mathrm{IFS}$ $\big\{\frac1{d}f_1,\ldots,\frac1{d}f_p\big\}$.
\end{Theorem}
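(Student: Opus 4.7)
The plan is to compare the renormalized orbit $(K_n)_n$ with the orbit of the genuinely contractive Hutchinson operator $H_d=\frac1d H$ attached to the rescaled IFS $\{\frac1d f_1,\ldots,\frac1d f_p\}$. Since $d>\lambda_H$, we have $\lambda_{H_d}=\lambda_H/d<1$, so $H_d$ is a strict contraction on $({\cal K},\dha)$ with unique fixed point $L_d$.

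First I would quantify how close the one-step image $K_{n+1}=\frac1{d_n}H(K_n)$ is to $H_d(K_n)=\frac1d H(K_n)$. A direct estimate using the definition of the Hausdorff distance gives, for any $K\in{\cal K}$ and any scalars $\alpha,\beta$,
\begin{gather*}
\dha(\alpha K,\beta K)\leqslant |\alpha-\beta|\,\rho(K),
\end{gather*}
because $\|\alpha x-\beta x\|\leqslant |\alpha-\beta|\,\rho(K)$ for every $x\in K$. Applied with $K=H(K_n)$, and recalling from \eqref{eq:defdn} that $\rho(H(K_n))=d_n$, this yields
\begin{gather*}
\dha(K_{n+1},H_d(K_n))\leqslant \left|\frac1{d_n}-\frac1d\right|d_n = \left|1-\frac{d_n}{d}\right|=:\eps_n,
\end{gather*}
and by hypothesis $\eps_n\to 0$.

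Next, set $u_n=\dha(K_n,L_d)$ and $\lambda=\lambda_H/d<1$. Using that $L_d$ is the fixed point of the $\lambda$-contraction $H_d$, the triangle inequality gives
\begin{gather*}
u_{n+1}\leqslant \dha(K_{n+1},H_d(K_n))+\dha(H_d(K_n),H_d(L_d))\leqslant \eps_n+\lambda\,u_n.
\end{gather*}
The sequence $(u_n)_n$ is bounded (since $K_n\subset B(0,1)$ for $n\geqslant 1$ and $L_d$ is compact), and iterating the recursion yields
\begin{gather*}
u_{n+1}\leqslant \lambda^{n+1}u_0+\sum_{k=0}^n \lambda^{n-k}\eps_k.
\end{gather*}

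The final step is the routine observation that for $\lambda\in(0,1)$ and $\eps_k\to 0$, the convolution $\sum_{k=0}^n \lambda^{n-k}\eps_k$ tends to $0$: split at an index $N$ beyond which $\eps_k<\eta$, bound the tail by $\eta/(1-\lambda)$, and let the initial block decay as $\lambda^{n-N}$ times a constant. Hence $u_n\to 0$, which is exactly $K_n\to L_d$ in the Hausdorff metric. There is no real obstacle here; everything reduces to the scaling bound for $\dha$ and the standard treatment of a perturbed geometric recurrence, the whole argument working precisely because the condition $d>\lambda_H$ turns $H_d$ into a contraction.
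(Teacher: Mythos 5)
Your proof is correct. It follows the same overall strategy as the paper — compare the renormalized orbit with the dynamics of the contractive operator $H_d$ and run a perturbed geometric recursion $u_{n+1}\leqslant \lambda u_n+\eps_n$ with $\eps_n\to0$ — but the decomposition is genuinely different. The paper compares the two \emph{orbits}, setting $K'_n=H_d^n(K_0)$ and bounding $\dha(K_{n+1},K'_{n+1})$ by splitting at the intermediate set $\frac1{d_n}H(K'_n)$; this forces it to introduce a compact set $B$ containing all the $K'_n$ to control $\rho(H(K'_n))$, and to shift $d$ to $d-\eta$ so that the contraction factor $\lambda_H/d_n$ becomes a uniform $\mu<1$ for large $n$. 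You instead compare $K_n$ directly with the \emph{fixed point} $L_d$, splitting at $H_d(K_n)$; this buys a fixed contraction ratio $\lambda=\lambda_H/d$ from the outset and the clean explicit error term $\eps_n=\big|1-\frac{d_n}{d}\big|$, with no auxiliary bounding set. The one thing your route gives up is the generality the author points out right after the proof: the paper's argument never uses how $d_n$ is defined, only that $d_n\to d>\lambda_H$, which is what lets it be reused verbatim for other size functions $\fhi$ in Section~\ref{sec:general}; your error estimate leans on the identity $\rho(H(K_n))=d_n$, so in that more general setting you would need an extra (easy) argument to keep $\rho(H(K_n))$ bounded. For the theorem as stated, both proofs are complete and yours is arguably the tidier of the two.
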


\begin{proof} Let us set $K'_n=H_d^n(K_0)$. We have to prove that $\eps_n=\dha(K_n,K'_n)$ converges to $0$. We can write
\begin{align*}
\dha(K_{n+1},K'_{n+1}) & \leqslant \dha\left(\frac1{d_n}H(K_n),\frac1{d_n}H(K'_n)\right)+ \dha\left(\frac1{d_n}H(K'_n),\frac1{d}H(K'_n)\right) \\
& \leqslant \frac{\lambda_H}{d_n}\dha(K_n,K'_n)+\left|\frac1{d_n}-\frac1{d}\right|\rho(H(K'_n)).
\end{align*}

Since $(K'_n)_n$ converges, there exists $B\in{\cal K}$ such that $K'_n\subset B$ for all $n\geqslant0$. Then let us fix $\eta>0$ and $N\geqslant 0$ such that $0<\lambda_H<d-\eta\leqslant d_n \leqslant d+\eta$ for all $n\geqslant N$. We obtain $0\leqslant \eps_{n+1} \leqslant \mu \eps_n + m_n$ where $\mu=\frac{\lambda_H}{d-\eta}$ and $m_n=\big|\frac1{d_n}-\frac1{d}\big|\rho(H(B))$.
It follows that
\begin{gather*}
\forall\,n>N,\qquad 0\leqslant \eps_n\leqslant \mu^{n-N} \eps_N + \sum_{k=0}^{n-N-1} \mu^k m_{n-1-k}.
\end{gather*}
Since $\mu\in[0,1)$ and $m_n\rightarrow 0$ it follows that $\eps_n \rightarrow 0$.
\end{proof}

Let us emphasize that we did not use the definition of $(d_n)_n$ nor the fact that the $f_i$'s are affine. Hence the result is valid for \textit{any} pairs of sequences $(K_n)_n$ and $(d_n)_n$ satisfying~\eqref{def:normhutchiter}.

Let us notice that the sequence $(d_n)_n$ depends on $K_0$, so that the two limits $d$ and $L_d$ may also depend on $K_0$. If $d\leqslant\lambda_H$, the asymptotic behavior of $(K_n)_n$ is more delicate to derive directly from the one of $(d_n)_n$. Therefore, in view of Theorem \ref{theo:renorm}, we ask the following questions: Does the sequence $(d_n)_n$ always converge? Does its limit may not depend on $K_0$ or may be smaller than $\lambda_H$?

\subsection[Convergence of $(d_n)_n$]{Convergence of $\boldsymbol{(d_n)_n}$}\label{subsec:cvgdn}

Except for very special cases it is impossible to obtain the exact expression of $d_n$. Therefore we rather seek for bounds for $d_n$ and $d$. Let us begin with a basic result.

\begin{Lemma}\label{lem:bounddn}Let $(d_n)_n$ be the sequence defined in \eqref{eq:defdn}. Then,
\begin{gather}\label{eq:bounddn}
\forall\,n\geqslant 1,\qquad \max_{1\leqslant i \leqslant p}\{\|b_i\|-\|A_i\|\} \leqslant d_n \leqslant \max_{1\leqslant i \leqslant p}\{\|A_i\|+\|b_i\|\}.
\end{gather}
In particular, if $(d_n)_n$ converges to $d$, then $d$ also satisfies~\eqref{eq:bounddn}.
\end{Lemma}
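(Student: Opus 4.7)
The plan is to exploit the crucial normalization fact already pointed out after equation \eqref{eq:defdn}: for every $n\geqslant 1$ we have $\rho(K_n)=1$, hence $K_n\subset B(0,1)$ while simultaneously $K_n$ contains at least one unit vector. Both inequalities will follow by applying the triangle inequality (in its two directions) to the affine map $f_i(x)=A_i x+b_i$ evaluated on points of $K_n$, and then taking a supremum over $x\in K_n$ and a maximum over~$i$.

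For the upper bound, I would fix $n\geqslant 1$ and any $x\in K_n$, and write
\begin{gather*}
\|f_i(x)\|=\|A_i x+b_i\|\leqslant \|A_i\|\,\|x\|+\|b_i\|\leqslant \|A_i\|+\|b_i\|,
\end{gather*}
using $\|x\|\leqslant 1$. Taking the supremum over $x\in K_n$ yields $\rho(f_i(K_n))\leqslant \|A_i\|+\|b_i\|$, and the upper bound in \eqref{eq:bounddn} follows from the definition $d_n=\max_i \rho(f_i(K_n))$.

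For the lower bound, the reverse triangle inequality applied to the same decomposition gives
\begin{gather*}
\|f_i(x)\|=\|A_i x+b_i\|\geqslant \|b_i\|-\|A_i x\|\geqslant \|b_i\|-\|A_i\|\,\|x\|\geqslant \|b_i\|-\|A_i\|,
\end{gather*}
again because $\|x\|\leqslant 1$ for any $x\in K_n$ when $n\geqslant 1$. Since $K_n$ is non-empty, taking the supremum (here even a single point suffices) yields $\rho(f_i(K_n))\geqslant \|b_i\|-\|A_i\|$, and maximizing over $i$ gives the left-hand inequality. The final statement about $d$ is then immediate by passing to the limit, since both bounds are independent of $n$.

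There is essentially no obstacle: the only thing one has to remember is that $\rho(K_n)=1$ holds for $n\geqslant 1$ but not for $n=0$, which is precisely why the statement is restricted to $n\geqslant 1$. Everything else is a one-line application of the triangle inequality in a normed space.
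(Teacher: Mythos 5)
Your proof is correct and follows essentially the same route as the paper: both arguments reduce to the observation that $K_n\subset B(0,1)$ for $n\geqslant 1$ and then apply the triangle inequality in both directions to $A_ix+b_i$ before maximizing over $i$. The paper phrases this as the inclusion $f_i(K_n)\subset B(b_i,\|A_i\|)$, but the content is identical.
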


\begin{proof} Let $n\geqslant 1$. One has $f_i(K_n)\subset f_i(B(0,1)) \subset B(b_i,\|A_i\|)$ for all $i\in\{1,\ldots,p\}$. Thus, any $x\in f_i(K_n)$ satisfies $| \|x\| - \|b_i\| | \leqslant \| x- b_i\| \leqslant \|A_i\|$, that is
\begin{gather*}
\|b_i\|-\|A_i\| \leqslant \|x\| \leqslant \|A_i\|+\|b_i\|.
\end{gather*}
Since $d_n = \max\limits_{1\leqslant i \leqslant p} \{\|x\|\colon x\in f_i(K_n)\}$ we obtain \eqref{eq:bounddn}.
\end{proof}

The next result provides non trivial bounds for the possible limit $d$.

\begin{Proposition}\label{prop:majd}If $(d_n)_n$ converges to $d$, then
\begin{gather}\label{eq:propmajd1}
\forall\,i\in\{1,\ldots,p\}, \qquad 0\leqslant \|b_i\| \leqslant \|d\id-A_i\|.
\end{gather}
Moreover, if $i\in\{1,\ldots,p\}$ is such that $d\notin\spec(A_i)$, then
\begin{gather*}
0\leqslant \big\|(d\id-A_i)^{-1}b_i\big\| \leqslant 1.
\end{gather*}
In particular, if $d>\lambda_H$ then
\begin{gather}\label{eq:propmajd3}
\max_{1\leqslant i\leqslant p} \big\{\big\|(d\id-A_i)^{-1}b_i\big\|\big\} \leqslant 1.
\end{gather}
\end{Proposition}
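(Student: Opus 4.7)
The guiding idea is that the nested inclusion $\frac{1}{d_n} f_i(K_n) \subset K_{n+1} \subset B(0,1)$ delivers, for every $x\in K_n$ and every $i$, the pointwise estimate $\|A_i x + b_i\|\leq d_n$. To extract sharper information, one follows a carefully chosen orbit inside the $K_n$'s that tracks the fixed point of $\frac{1}{d} f_i$.

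For Part~(b), fix $i$ with $d\notin\spec(A_i)$ and set $y_i = (d\id-A_i)^{-1}b_i$, so that $\frac{1}{d}f_i(y_i)=y_i$. Pick any $z_0\in K_0$ and iterate $z_{n+1}=\frac{1}{d_n}f_i(z_n)$; the inclusion above guarantees $z_n\in K_n\subset B(0,1)$, hence $\|z_n\|\leq 1$. An elementary affine manipulation gives
\[
z_{n+1}-y_i = \frac{1}{d_n}\bigl[A_i(z_n-y_i) + (d-d_n)\,y_i\bigr],
\]
so that $\|z_{n+1}-y_i\|\leq \frac{\|A_i\|}{d_n}\|z_n-y_i\| + \frac{|d-d_n|}{d_n}\|y_i\|$. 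In the range $d>\|A_i\|$, the multiplicative factor is eventually $<1$ and the additive term tends to $0$, so a standard geometric-series argument yields $z_n\to y_i$; passing to the limit in $\|z_n\|\leq 1$ gives $\|y_i\|\leq 1$. Part~(c) then follows at once: $d>\lambda_H=\max_i\|A_i\|\geq \alpha(A_i)$ ensures both $d\notin\spec(A_i)$ and the contractivity condition $d>\|A_i\|$ for every $i$, so Part~(b) applies uniformly.

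Part~(a) splits in two. If $d\notin\spec(A_i)$, the identity $b_i=(d\id-A_i)y_i$ combined with Part~(b) yields $\|b_i\|\leq \|d\id-A_i\|\cdot\|y_i\|\leq \|d\id-A_i\|$. If $d\in\spec(A_i)$, finiteness of $\spec(A_i)$ and $d_n\to d$ give $d_n\notin\spec(A_i)$ for all large $n$; applying the same argument with $d_n$ in place of $d$ produces $\|b_i\|\leq \|d_n\id-A_i\|$, and continuity of the operator norm finishes the proof after $n\to\infty$. The main obstacle is that the recursive contraction used in Part~(b) genuinely requires $d>\|A_i\|$: to cover Part~(b) in its stated generality ($d\notin\spec(A_i)$ with possibly $d\leq\|A_i\|$), one must invoke the stable/unstable decomposition of $\frac{1}{d}A_i$ and observe that the constraint $z_n\in B(0,1)$ forces the orbit to stay in the stable manifold through $y_i$, which in turn pinches $y_i$ into the closed unit ball.
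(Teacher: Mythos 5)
Your contraction argument is sound exactly where $d>\|A_i\|$: the recursion $\|z_{n+1}-y_i\|\leqslant \frac{\|A_i\|}{d_n}\|z_n-y_i\|+\frac{|d-d_n|}{d_n}\|y_i\|$ does force $z_n\to y_i$, hence $\|y_i\|\leqslant 1$, and this correctly yields \eqref{eq:propmajd3} (it is in spirit the route the paper takes later, in Proposition~\ref{prop:limerho}, via the fixed point of $\frac1d f_i$ lying in $L_d$). But that is only a fragment of the proposition, and the two devices you propose for the remaining cases do not work. For the middle claim with $d\notin\spec(A_i)$ but $d\leqslant\|A_i\|$, the ``stable/unstable decomposition'' sketch fails: $d\notin\spec(A_i)$ does not exclude eigenvalues of $A_i$ of modulus exactly $d$. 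Take $D=1$ and $A_i=-d$, so $y_i=b_i/(2d)$; with $d_n\equiv d$ the orbit $z_{n+1}=\frac1d f_i(z_n)=-z_n+b_i/d$ is $2$-periodic and never approaches $y_i$, so there is no limit to pass to in $\|z_n\|\leqslant 1$ --- yet the conclusion $|y_i|\leqslant 1$ still holds, because $y_i=\frac12(z_n+z_{n+1})$: it comes from \emph{averaging} the orbit, not from its convergence. In addition, the assertion that boundedness forces the non-autonomous orbit $z_{n+1}=\frac1{d_n}f_i(z_n)$ onto a stable manifold is not proved. Your treatment of \eqref{eq:propmajd1} when $d\in\spec(A_i)$ is also broken: $d_n\to d$ does not give $d_n\notin\spec(A_i)$ for large $n$ (the sequence may be constantly equal to $d$), and ``the same argument with $d_n$ in place of $d$'' is not available anyway, since your Part~(b) needs the \emph{whole} sequence $(d_k)_k$ to converge to the value defining the fixed point. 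This case is not academic: the paper invokes \eqref{eq:propmajd1} precisely when $d=\alpha_j\in\spec(A_j)$ in the proof of Lemma~\ref{lem:limefhid}. Finally, your Part~(a) in the case $d\notin\spec(A_i)$ rests on Part~(b), so it inherits the gap whenever $d\leqslant\|A_i\|$.

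The paper's proof sidesteps all of this with a single uniform device: from $b_i=d_nx_{n+1}-A_ix_n$ it telescopes to $nb_i=(d_nx_{n+1}-A_ix_1)+\sum_{k=1}^{n-1}(d_k\id-A_i)x_{k+1}$, takes norms, and applies Ces\`aro's lemma using only $\|x_{k+1}\|\leqslant 1$ and $d_k\to d$; multiplying the identity by $(d\id-A_i)^{-1}$ first gives the middle claim under the sole assumption $d\notin\spec(A_i)$, with no contractivity and no dynamics. You should keep your fixed-point picture as an illustration of \eqref{eq:propmajd3} if you like, but replace the arguments for the first two claims by this averaging argument.
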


\begin{proof} Let $i\in\{1,\ldots,p\}$ and consider the sequence $(x_n)_{n\geqslant 1}$ defined by $x_1\in K_1$ and $x_{n+1}= \frac1{d_n}(A_i x_n + b_i)$. One has $x_n \in K_n$ and $b_i = d_n x_{n+1} - A_i x_n$. By summation we get
\begin{gather}\label{eq:proofpropmajd1}
nb_i = (d_n x_{n+1} - A_i x_1) + \sum_{k=1}^{n-1} (d_k\id-A_i)x_{k+1}.
\end{gather}
Therefore, for all $n>1$,
\begin{align*}
\|b_i\| & \leqslant \frac1{n}\|d_n x_{n+1} - A_i x_1\| + \frac1{n}\sum_{k=1}^{n-1} \|(d_k\id-A_i)x_{k+1}\| \\
 & \leqslant \frac{2}{n} (d_n + \|A_i\| ) + \left(\frac1{n-1}\sum_{k=1}^{n-1} \|d_k\id-A_i\|\right).
\end{align*}
The first term in the sum above goes to $0$ when $n\to\infty$ and Cesàro's lemma implies that the term into brackets goes to $\|d\id-A_i\|$. That gives~\eqref{eq:propmajd1}.

Now assume that $i$ is such that $d\notin\spec(A_i)$. Then the matrix $M_i=d\id-A_i$ is invertible and~\eqref{eq:proofpropmajd1} yields
\begin{gather*}\label{eq:proofpropmajd2}
n(M_i^{-1} b_i) = M_i^{-1}(d_n x_{n+1} - A_i x_1) + \sum_{k=1}^{n-1} M_i^{-1}(d_k\id-A_i)x_{k+1}.
\end{gather*}
Thus we obtain in a similar way
\begin{align*}
\big\|M_i^{-1}b_i\big\| \leqslant \frac{2}{n}\big\|M_i^{-1}\big\| (d_n + \|A_i\| ) + \left(\frac1{n-1}\sum_{k=1}^{n-1} \big\|M_i^{-1}(d_k\id-A_i)\big\|\right).
\end{align*}
We conclude as above using that $\big\|M_i^{-1}(d_k\id-A_i)\big\|\rightarrow \|\id\| =1$ as $k\rightarrow \infty$.

Finally, since $\|M\|\geqslant\alpha(M)$ holds for every matrix $M$, inequality $d>\lambda_H$ implies that $d\notin\cap_{i=1}^p \spec(A_i)$, which concludes the proof.
\end{proof}

We will now show that \eqref{eq:propmajd3} is an equality when the $A_i$'s are homotheties. Actually, we will prove again~\eqref{eq:propmajd3} but with a very different approach which can be generalized (see Theorem~\ref{theo:cvgfhi}$(i)$). We need the following result. We denote by $\ch(K)$ the convex hull of a non-empty set $K$.

\begin{Lemma}\label{lem:convhull}
Assume that $\|A_i\|<1$ for all $i\in\{1,\ldots,p\}$. Denote by $z_i$ the unique invariant point of $f_i$ and by $L$ the attractor of the $\mathrm{IFS}$ $\{f_1,\ldots,f_p\}$. If $f_j(z_i)\in \ch(\{z_1,\ldots,z_p\})$ for all $i,j\in\{1,\ldots,p\}$, then the convex hull of $L$ is the polytope $\ch(\{z_1,\ldots,z_p\})$.
\end{Lemma}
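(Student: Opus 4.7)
The plan is to prove the two inclusions $\ch(\{z_1,\ldots,z_p\})\subset \ch(L)$ and $\ch(L)\subset \ch(\{z_1,\ldots,z_p\})$ separately. Write $P=\ch(\{z_1,\ldots,z_p\})$.

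For the first inclusion I would show that each fixed point $z_i$ belongs to $L$. Starting from $K_0=\{z_i\}$, since $z_i=f_i(z_i)$, an immediate induction gives $z_i\in H^n(\{z_i\})$ for every $n\geqslant 0$. Because $(H^n(\{z_i\}))_n$ converges to $L$ in the Hausdorff metric and $z_i$ lies in every term of the sequence, we get $z_i\in L$. Consequently $\{z_1,\ldots,z_p\}\subset L$, and taking convex hulls gives $P\subset \ch(L)$.

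For the reverse inclusion, the key observation is that $P$ is forward-invariant under $H$. Indeed, $P$ is a compact convex polytope, and because each $f_i$ is affine we have
\begin{gather*}
f_i(P)=f_i\big(\ch(\{z_1,\ldots,z_p\})\big)=\ch\big(\{f_i(z_1),\ldots,f_i(z_p)\}\big).
\end{gather*}
By the hypothesis, each $f_i(z_j)$ lies in $P$, and since $P$ is convex this forces $f_i(P)\subset P$. Taking the union over $i$ yields $H(P)\subset P$, so by induction $H^n(P)\subset P$ for every $n\geqslant 0$. Since $H$ is contractive on $({\cal K},\dha)$ (all $\|A_i\|<1$), the sequence $\big(H^n(P)\big)_n$ converges to $L$, and the closedness of $P$ then gives $L\subset P$, whence $\ch(L)\subset P$.

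Combining the two inclusions yields $\ch(L)=P=\ch(\{z_1,\ldots,z_p\})$, as claimed. I do not expect any real obstacle: the only delicate point is justifying $z_i\in L$ (which uses that a point fixed throughout a convergent sequence of compacts belongs to the limit), and the crucial use of affinity in the identity $f_i(\ch(S))=\ch(f_i(S))$, which turns the pointwise hypothesis $f_j(z_i)\in P$ into the set-level invariance $H(P)\subset P$.
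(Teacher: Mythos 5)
Your proposal is correct and follows essentially the same route as the paper: both establish $\ch(\{z_1,\ldots,z_p\})\subset\ch(L)$ by observing that each fixed point $z_i$ lies in $L$, and both obtain the reverse inclusion by using affinity and the hypothesis to show the polytope is forward-invariant under $H$, whence $L$ is contained in it. The only cosmetic difference is that the paper verifies $f_i(C)\subset C$ by expanding a convex combination directly, while you package the same computation as the identity $f_i(\ch(S))=\ch(f_i(S))$.
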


\begin{proof} Let us write $C=\ch(\{z_1,\ldots,z_p\})$. Let $i\in\{1,\ldots,p\}$. Since $f_i(z_i)=z_i$, one has $z_i\in L \subset \ch(L)$ and then $C \subset\ch(L)$. To prove the reverse inclusion we have to state that $L\subset C$. It is enough to prove that $H(C) \subset C$, i.e., that $f_i(C)\subset C$. So let $z=\sum\limits_{j=1}^p t_j z_j$, $t_j\geqslant 0$ and $\sum\limits_{j=1}^p t_j=1$, a point in $C$. We have
\begin{gather*}
f_i(z) = \sum_{j=1}^p t_j \big ( A_i z_j + b_i \big) = \sum_{j=1}^p t_j f_j(z_i),
\end{gather*}
thus $f_i(z)\in C$.
\end{proof}

\begin{Proposition}\label{prop:limerho}
If $(d_n)_n$ converges to $d$ with $d>\lambda_H$ then $d$ satisfies the inequality
\begin{gather}\label{eq:ineqrho}
\rho\big(\big\{(d\id -A_1)^{-1}b_1,\ldots,(d\id -A_p)^{-1}b_p\big\}\big)\leqslant 1.
\end{gather}
Moreover, if $A_i=\alpha_i \id$ with $\alpha_i\geqslant 0$ for all $i\in\{1,\ldots,p\}$, then \eqref{eq:ineqrho} is an equality. In this case, there is at least one $b_i\neq 0$.
\end{Proposition}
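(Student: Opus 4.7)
The first inequality is essentially a repackaging of Proposition~\ref{prop:majd}. Since $d>\lambda_H$, the matrices $d\id-A_i$ are all invertible (as $d\notin\spec(A_i)$), so the vectors $v_i=(d\id-A_i)^{-1}b_i$ are well defined, and by the stability property \eqref{eq:rhostab} applied inductively we have $\rho(\{v_1,\ldots,v_p\})=\max_i\|v_i\|$, which is exactly bounded by $1$ in \eqref{eq:propmajd3}. So the first part requires only this translation.

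For the homothety case, the plan is to identify the $v_i$'s with fixed points and invoke the convex hull description from Lemma~\ref{lem:convhull}. Writing $A_i=\alpha_i\id$ with $0\leqslant\alpha_i<d$ (from $d>\lambda_H=\max_i\alpha_i$), one computes
\[
v_i=(d\id-A_i)^{-1}b_i=\frac{b_i}{d-\alpha_i},
\]
and a direct check shows that $v_i$ is precisely the unique fixed point $z_i$ of the contraction $\frac{1}{d}f_i$. By Theorem~\ref{theo:renorm}, the orbit $(K_n)_n$ converges to the attractor $L_d$ of $\bigl\{\frac1{d}f_1,\ldots,\frac1{d}f_p\bigr\}$. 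Since $\rho(K_n)=1$ for every $n\geqslant1$ and $\rho$ is continuous, this forces $\rho(L_d)=1$.

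Next I would verify the hypothesis of Lemma~\ref{lem:convhull} for the IFS $\bigl\{\frac1{d}f_1,\ldots,\frac1{d}f_p\bigr\}$. A short calculation using the homothety form gives
\[
\tfrac{1}{d}f_j(z_i)=\tfrac{\alpha_j}{d}\,z_i+\tfrac{b_j}{d}=\tfrac{\alpha_j}{d}\,z_i+\tfrac{d-\alpha_j}{d}\,z_j,
\]
which is a convex combination of $z_i$ and $z_j$ because $0\leqslant\alpha_j<d$. Hence Lemma~\ref{lem:convhull} yields $\ch(L_d)=\ch(\{z_1,\ldots,z_p\})$. Since $\|\cdot\|$ is convex, the supremum of the norm over a compact set is attained at an extreme point of its convex hull, so $\rho(K)=\rho(\ch(K))$ for any $K\in\mathcal{K}$; applying this to both sides gives
\[
1=\rho(L_d)=\rho(\ch(L_d))=\rho(\ch(\{z_1,\ldots,z_p\}))=\rho(\{v_1,\ldots,v_p\}),
\]
which is the claimed equality. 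Finally, if all $b_i$ were zero, then all $v_i$ would be zero and $\rho(\{v_1,\ldots,v_p\})=0\neq 1$, a contradiction, so at least one $b_i\neq 0$.

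I do not anticipate any real obstacle: the only non-cosmetic step is checking the convex combination identity needed to apply Lemma~\ref{lem:convhull}, and the passage $\rho(K)=\rho(\ch(K))$, both of which are elementary. The conceptual content is in recognising the abstract bound~\eqref{eq:propmajd3} as a statement about the fixed points $z_i$ of $\frac{1}{d}f_i$ and then extracting sharpness from the convex-hull geometry of~$L_d$.
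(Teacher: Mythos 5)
Your proposal is correct, and the equality case is argued exactly as in the paper: identify $z_i=(d\id-A_i)^{-1}b_i=b_i/(d-\alpha_i)$ as the fixed point of $\frac1d f_i$, check the convex-combination identity $\frac1d f_j(z_i)=\frac{\alpha_j}{d}z_i+\bigl(1-\frac{\alpha_j}{d}\bigr)z_j$ so that Lemma~\ref{lem:convhull} applies to $\bigl\{\frac1d f_1,\ldots,\frac1d f_p\bigr\}$, and then use $\rho(K)=\rho(\ch(K))$ together with $\rho(L_d)=1$. Where you diverge is the inequality~\eqref{eq:ineqrho}: you obtain it by observing that, for a finite set, $\rho$ is just the maximum of the norms, so \eqref{eq:ineqrho} is literally the same statement as \eqref{eq:propmajd3} from Proposition~\ref{prop:majd}. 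That is logically unimpeachable (the paper itself remarks on this equivalence right after the proposition), but the paper deliberately proves the inequality by a different mechanism: since $d>\lambda_H$, Theorem~\ref{theo:renorm} gives $K_n\to L_d$, each $z_i$ lies in $L_d$ as the fixed point of $\frac1d f_i$, and monotonicity of $\rho$ plus $\rho(L_d)=1$ yield the bound. The point of that detour, which the text flags explicitly, is that it uses only continuity, monotonicity and homogeneity of the size function and therefore transfers verbatim to an arbitrary size function $\fhi$ in Theorem~\ref{theo:cvgfhi}$(i)$, whereas your route leans on the Ces\`aro-summation argument of Proposition~\ref{prop:majd}, which is tied to the Euclidean norm and does not generalize. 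Your shortcut is cheaper for $\rho$ itself; the paper's argument buys the generalization (and incidentally already puts the fixed points $z_i$ inside $L_d$, which is the geometric fact the equality case exploits).
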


\begin{proof} Let $i\!\in\!\{1,{\ldots},p\}$. First, $d>\lambda_H$ implies that $d\id -A_i$ is invertible and $z_i\!=\!(d\id -A_i)^{-1}b_i$ is the unique invariant point of $\frac1{d}f_i$. Secondly, $d>\lambda_H$ implies that $(K_n)_n$ converges to $L_d$ so $z_i\in L_d$. Therefore $\{z_1,\ldots,z_p\} \subset L_d$, and, by monotonicity, $\rho(\{z_1,\ldots,z_p\})\leqslant \rho(L_d)=1$. That gives~\eqref{eq:ineqrho}.

Now, if all the $A_i$'s are homotheties, one has
\begin{gather*}
\forall\,j\in\{1,\ldots,p\},\qquad \frac{f_j}{d}(z_i)= \frac{\alpha_j}{d}\,z_i + \left(1-\frac{\alpha_j}{d}\right)z_j.
\end{gather*}
Since $0\leqslant \alpha_j < d$, one has $\frac{f_j}{d}(z_i)\in \ch(\{z_1,\ldots,z_p\})$. Thus, it follows from Lemma~\ref{lem:convhull} applied to the IFS $\big\{\frac1{d}f_1,\ldots,\frac1{d}f_p\big\}$ that $\ch(\{z_1,\ldots,z_p\})=\ch(L_d)$. Since $\rho(\ch(K))=\rho(K)$ for all $K\in{\cal K}$, we obtain
\begin{gather*}
1=\rho(L_d)=\rho(\ch(L_d))=\rho(\ch(\{z_1,\ldots,z_p\}))=\rho(\{z_1,\ldots,z_p\}),
\end{gather*}
hence \eqref{eq:ineqrho} becomes an equality. Finally, if $b_i=0$ for all $i\in\{1,\ldots,p\}$ then the left-hand side of \eqref{eq:ineqrho} is zero, hence a contradiction.
\end{proof}

Notice that using the stability property of $\rho$, \eqref{eq:ineqrho} gives \eqref{eq:propmajd3}.

We conclude now by giving another non trivial bounds for $d$ valid for a particular class of IFS's. The next result is only a rephrasing of Theorems~2 and~3 in \cite{barvin11}.

\begin{Proposition}\label{prop:limJSR} Assume that the $A_i$'s have no common invariant subspaces except~$\{0\}$ and~${\mathbb R}^D\!$. If $(K_n)_n$ converges to $K\in{\cal K}$, then $(d_n)_n$ converges to
\begin{gather*}\label{eq:limJSR}
d=\max_{1\leqslant i \leqslant p} \rho(f_i(K)) \geqslant \sigma_{{\cal M}}
\end{gather*}
and equality holds if $b_i=0$ for all $i\in\{1,\ldots,p\}$.
\end{Proposition}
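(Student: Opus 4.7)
The plan is to combine two ingredients: a soft continuity argument that identifies the limit $d$, and the cited eigen-equation results of Barnsley--Vince that control $d$ from below via the joint spectral radius.

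First I would extract the value of $d$ purely from continuity. Each $f_i$ is Lipschitz on ${\mathbb R}^D$, so the induced map $K\mapsto f_i(K)$ is continuous on $({\cal K},\dha)$; the radius function $\rho$ is continuous on $({\cal K},\dha)$ by the property recalled in Section~\ref{subsec:strategy}; and finite union is continuous for $\dha$. Hence from $K_n\to K$ and the stability identity \eqref{eq:rhostab} we get
\begin{gather*}
d_n=\max_{1\leqslant i\leqslant p}\rho(f_i(K_n))\ \longrightarrow\ \max_{1\leqslant i\leqslant p}\rho(f_i(K))=:d.
\end{gather*}
Since $\rho(K_n)=1$ for $n\geqslant 1$, passing to the limit yields $\rho(K)=1$, so $K\ne\{0\}$ and one of the $\rho(f_i(K))$ is positive; thus $d>0$ and in particular the normalization in \eqref{def:normhutchiter} is legitimate at the limit. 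Taking the limit in \eqref{def:normhutchiter} then gives $K=\frac1d H(K)$, i.e., the eigen-equation $H(K)=dK$ with $K\in{\cal K}$ and $d>0$.

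At this point the proposition reduces to a statement about admissible eigenvalues of $H$. The hypothesis that the $A_i$'s share no invariant subspace other than $\{0\}$ and ${\mathbb R}^D$ is precisely the irreducibility assumption under which Theorems~2 and~3 of \cite{barvin11} apply: they assert that any $d>0$ for which $H$ admits a non-empty compact eigenset satisfies $d\geqslant \sigma_{\cal M}$, and that in the strictly linear case $b_i=0$ for all $i$ one in fact has equality $d=\sigma_{\cal M}$ whenever such an eigenset exists. Applying those results to the pair $(d,K)$ produced above gives the claimed inequality and the equality case.

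The two statements then follow by simply combining the two steps; no further calculation is required. The only real work is verifying that the hypotheses of the Barnsley--Vince theorems are met by our $(d,K)$, namely that $K$ is a genuine non-empty compact eigenset and that $d$ is strictly positive, both of which we have already secured from $\rho(K)=1$. I do not foresee any obstacle beyond a careful citation of \cite{barvin11}; the delicate part of the argument — the lower bound by the joint spectral radius — is entirely outsourced to those theorems, which is why the author advertises the proposition as a rephrasing.
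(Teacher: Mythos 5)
Your proposal matches the paper's treatment: the paper gives no separate proof, stating explicitly that the proposition ``is only a rephrasing of Theorems~2 and~3 in \cite{barvin11}'', and your argument --- identifying $d=\max_i\rho(f_i(K))$ by continuity of $\rho$, $f_i$ and finite unions, passing to the limit in \eqref{def:normhutchiter} to get the eigen-equation $H(K)=dK$, and then invoking Barnsley--Vince under the irreducibility hypothesis --- is precisely the implicit content of that remark (the eigen-equation step is also spelled out in Section~\ref{subsec:eigenequation}). The only point worth tightening is your one-line claim that $\rho(K)=1$ forces some $\rho(f_i(K))>0$: this deserves a word (e.g., via the standing assumption $d_n>0$ together with the eigen-equation, or the irreducibility hypothesis), but it is a minor degeneracy check rather than a gap in the method.
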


The determination of $\sigma_{{\cal M}}$ is delicate but the basic estimates
\begin{gather*}\label{eq:basicestimJSR}
\max_{1\leqslant i\leqslant p}\{\alpha(A_i)\} \leqslant \sigma_{{\cal M}} \leqslant \max_{1\leqslant i\leqslant p}\{\|A_i\|\} = \lambda_H
\end{gather*}
always hold (see \cite{theys}). In particular for homotheties, i.e., when $A_i=\alpha_i \id$ with $\alpha_i\geqslant 0$, one obtains $\sigma_{{\cal M}}=\lambda_H = \max\limits_{1\leqslant i\leqslant p}\{\alpha_i\}$. Unfortunately, this simple case does not fulfill the hypotheses of Proposition~\ref{prop:limJSR}.

\subsection{Case of homotheties}\label{subsec:radhomo}

We can give a complete answer when all the $A_i$'s are homotheties: the sequence $(K_n)_n$ always converges and its limit may be explicited. First, we show that $(d_n)_n$ converges and we give the possible value for its limit~$d$.

\begin{Lemma}\label{lem:limefhid} Assume that $A_i=\alpha_i \id$ with $\alpha_i\geqslant 0$ for all $i\in\{1,\ldots,p\}$. Let $j$ be an index such that $\alpha_j=\lambda_H=\max\limits_{1\leqslant i\leqslant p}\{\alpha_i\}$. Then $(d_n)_n$ converges to a number $d>0$. If $d=\alpha_j$ then $b_j=0$ else $d\neq\alpha_j$ and satisfies
\begin{gather*}\label{eq:ineqfhif}
d= \begin{cases}
 \displaystyle\max_{1\leqslant i\leqslant p}\{\alpha_i+\|b_i\|\} & \text{if $d>\alpha_j$}, \\
 \alpha_j-\|b_j\| & \text{if $d<\alpha_j$.}
 \end{cases}
\end{gather*}
\end{Lemma}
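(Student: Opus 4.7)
The plan is to first extract subsequential limits of $(K_n,d_n)$ satisfying an eigen-equation, then to characterize the possible values of $d$ by a case split relative to $\alpha_j=\lambda_H$, and finally to argue for convergence of the full sequence.

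By Lemma~\ref{lem:bounddn} the sequence $(d_n)$ lies in a bounded interval, and $K_n\subset B(0,1)$ for all $n\geq 1$. Compactness of $\big({\cal K}_{B(0,1)},\dha\big)$ (Blaschke's theorem), combined with an extraction along consecutive pairs $(K_{n_k},K_{n_k+1})$, yields cluster triples $(K,K',d)$ with $\rho(K)=\rho(K')=1$, $d>0$ and $K'=H(K)/d$. A diagonal extraction along both forward and backward shifts of the subsequence then produces a cluster point $K$ satisfying the eigen-equation $H(K)=dK$ with $\rho(K)=1$.

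Next, I case-analyze the value of $d$. If $d>\alpha_j$, then $H/d$ is contractive with unique attractor $L_d$, so the eigen-equation forces $K=L_d$; the homothety-case derivation of Proposition~\ref{prop:limerho}, which only uses $K=L_d$ and Lemma~\ref{lem:convhull}, yields $\max_i \|b_i\|/(d-\alpha_i)=1$, which rearranges to $d=\max_i(\alpha_i+\|b_i\|)$. If $d=\alpha_j$, the inclusion $\alpha_j K+b_j\subset H(K)=\alpha_j K$ gives $K+b_j/\alpha_j\subset K$; iterating yields $K+nb_j/\alpha_j\subset K$ for all $n\in\mathbb{N}$, and boundedness of $K$ forces $b_j=0$. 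If $d<\alpha_j$, the inclusion $\alpha_j K+b_j\subset dK$ rewrites as $g(K)\subset K$, where $g(x)=(\alpha_j x+b_j)/d$ is a homothety of ratio $\alpha_j/d>1$ with fixed point $z_j=b_j/(d-\alpha_j)$; iterating a strictly dilating map inside a bounded set collapses $K$ to $\{z_j\}$, and $\rho(\{z_j\})=1$ then gives $d=\alpha_j-\|b_j\|$.

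The main obstacle is upgrading subsequential convergence to convergence of the full sequence $(d_n)$. In the case $d>\alpha_j$, Theorem~\ref{theo:renorm} applied along any subsequence where $d_n$ remains above $\lambda_H+\eta$ forces $K_n\to L_d$ and hence $d_n\to d$, with $d$ uniquely determined. In the case $d<\alpha_j$, the singleton rigidity of the limit $K=\{z_j\}$ plays the analogous role once the orbit comes close. The delicate case is $d=\alpha_j$, which lies exactly at the contractivity threshold where the estimate of Theorem~\ref{theo:renorm} degenerates; I expect this to be the crux, and the natural fix is to exploit $b_j=0$ directly: the linear map $f_j$ then contributes $\rho(f_j(K_n))=\alpha_j$ identically for $n\geq 1$, pinning $d_n\geq\alpha_j$, so it only remains to argue that the subdominant $f_i$'s cannot create an accumulation point strictly above $\alpha_j$ without forcing convergence to that higher value instead.
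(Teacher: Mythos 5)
There is a genuine gap, and it sits exactly where you flag it but is worse than you suggest. Your first step claims that a diagonal extraction produces a cluster point $K$ satisfying the eigen-equation $H(K)=dK$. Compactness of $\big({\cal K}_{B(0,1)},\dha\big)$ only gives you subsequential limits, and the $\omega$-limit set of the orbit of $H_{\rho}$ is invariant as a \emph{set} ($H_{\rho}(\omega)=\omega$), which is compatible with periodic or more complicated recurrent behaviour; it does not yield a single cluster point fixed by $H_{\rho}$. So the triple $(K,K',d)$ you extract only satisfies $K'=\frac1{d}H(K)$ with possibly $K'\neq K$, and the entire case analysis that follows (which is otherwise sound: the translation argument forcing $b_j=0$ when $d=\alpha_j$, the dilation argument collapsing $K$ to $\{z_j\}$ when $d<\alpha_j$, and the use of Lemma~\ref{lem:convhull} when $d>\alpha_j$) is conditional on an eigen-equation you have not established. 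Your closing paragraph then concedes that the passage from subsequential to full convergence of $(d_n)_n$ is open in the critical case $d=\alpha_j$ — but that convergence is precisely the main assertion of the lemma, so the proof is incomplete on its central point.

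The idea you are missing is that for homotheties $(d_n)_{n\geqslant1}$ is \emph{monotone increasing}, which makes all the compactness and extraction machinery unnecessary. Pick $y_n\in K_n$ and $i_n$ realizing $d_n=\|\alpha_{i_n}y_n+b_{i_n}\|$ and set $u_n=\frac1{d_n}(\alpha_{i_n}y_n+b_{i_n})\in K_{n+1}$, so $\|u_n\|=1$ and $d_nu_n-b_{i_n}=\alpha_{i_n}y_n$. Then
\begin{gather*}
d_{n+1}\geqslant\|\alpha_{i_n}u_n+b_{i_n}\|\geqslant\|(\alpha_{i_n}+d_n)u_n\|-\|d_nu_n-b_{i_n}\|=(\alpha_{i_n}+d_n)-\alpha_{i_n}\|y_n\|\geqslant d_n,
\end{gather*}
where the homothety structure is essential to make $\alpha_{i_n}u_n$ and $d_nu_n$ collinear. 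Boundedness (Lemma~\ref{lem:bounddn}) then gives convergence of $(d_n)_n$ outright, and the values of $d$ are pinned down by the already-proved inequality $\|b_j\|\leqslant|d-\alpha_j|$ of Proposition~\ref{prop:majd} (whose proof is a Ces\`aro argument needing only convergence of $(d_n)_n$, not of $(K_n)_n$) together with the lower bound $d\geqslant\alpha_j-\|b_j\|$ and, for $d>\alpha_j$, Proposition~\ref{prop:limerho}. Your case-by-case identification of the possible values of $d$ essentially reproduces these last steps, but without the monotonicity lemma the argument does not close.
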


\begin{proof}For all $n\geqslant 1$ we can find $y_n\in K_n$ and $i_n\in\{1,\ldots,p\}$ such that $d_n=\|\alpha_{i_n} y_n + b_{i_n}\|$. Then, $u_n=\frac1{d_n}(\alpha_{i_n} y_n + b_{i_n})$ satisfies $u_n\in K_{n+1}$ and $\|u_n\|=1$. Since $\|y_n\|\leqslant 1$ we obtain
\begin{gather*}
d_{n+1}\geqslant \|\alpha_{i_n} u_n + b_{i_n}\| \geqslant \|\alpha_{i_n} u_n + d_n u_n\| - \|d_n u_n \negthickspace - b_{i_n}\|
 = (\alpha_{i_n}\negthickspace + d_n)\|u_n\| - \alpha_{i_n}\|y_n\|\geqslant d_n.
\end{gather*}
Thus $(d_n)_{n\geqslant1}$ is increasing and bounded (see \eqref{eq:bounddn}), so it converges. Let $d$ be its limit.

For all $n\geqslant 1$, choosing $x_n\in K_n$ such that $\|x_n\|=1$ we get
\begin{gather}\label{eq:minordj}
d\geqslant d_n\geqslant \|\alpha_j x_n + b_j\| \geqslant \|\alpha_j x_n\| - \|b_j\| = \alpha_j - \|b_j\|.
\end{gather}

Inequality \eqref{eq:propmajd1} with $i=j$ writes $\|b_j\|\leqslant |d-\alpha_j|$ so $d=\alpha_j$ implies $b_j=0$. If $d<\alpha_j$ then $d\leqslant \alpha_j-\|b_j\|$. In addition with~\eqref{eq:minordj} we obtain $d=\alpha_j-\|b_j\|$.

If $d>\alpha_j$, it follows from Proposition~\ref{prop:limerho} that $d$ is a solution of $\max\limits_{1\leqslant i \leqslant p} \frac{\|b_i\|}{|t-\alpha_i|}= 1$. We can consider only the $b_i\neq 0$. Then, since $d>\lambda_H$ and the functions $t\mapsto \frac{\|b_i\|}{|t-\alpha_i|}$ are strictly decreasing on $(\lambda_H,+\infty)$, the unique solution is $\max\limits_{1\leqslant i\leqslant p}\{\alpha_i+\|b_i\|\}$.
\end{proof}

We can state now the precise result. We denote by $\cl(K)$ the closure of a non-empty set $K$.

\begin{Theorem}\label{theo:radius}Assume that $A_i=\alpha_i \id$ with $\alpha_i\geqslant 0$ for all $i\in\{1,\ldots,p\}$. Let $j$, $k$ two indices such that $\alpha_j =\max\limits_{1\leqslant i\leqslant p}\{\alpha_i\}$ and $\alpha_k+\|b_k\|=\max\limits_{1\leqslant i\leqslant p}\{\alpha_i+\|b_i\|\}$. Then, for all $K_0\in {\cal K}$, the sequence~$(K_n)_n$ converges to a set $K\in{\cal K}$. Precisely,
\begin{enumerate}\itemsep=0pt
\item[$(i)$] if $b_j\neq 0$ then
\begin{enumerate}\itemsep=0pt
\item[$(a)$] either $\alpha_j-\|b_j\|>0$, $f_i\big({-}\frac1{\|b_j\|}b_j\big)=(\alpha_j-\|b_j\|)\big({-}\frac1{\|b_j\|}b_j\big)$ for all $i\in\{1,\ldots,p\}$ and $K_0=H_{\rho}^{-1}\big(\big\{{-}\frac1{\|b_j\|}b_j\big\}\big)$: in this case $K=\big\{{-}\frac1{\|b_j\|}b_j\big\}$,
\item[$(b)$] or else $K$ does not depend on $K_0$: it is the attractor $L_d$ with $d=\alpha_k+\|b_k\|$ and $\frac1{\|b_k\|}b_k \in L_d$;
\end{enumerate}
\item[$(ii)$] if $b_j=0$ then
\begin{enumerate}\itemsep=0pt
\item[$(a)$] either $\alpha_j\geqslant \alpha_k+\|b_k\|$ and then $K=\cl\big({\bigcup}_{n\geqslant 1} K_n\big)$,
\item[$(b)$] or else $\alpha_j<\alpha_k+\|b_k\|$ and then $K$ does not depend on $K_0$: it is the attractor $L_d$ with $d=\alpha_k+\|b_k\|$ and $\frac1{\|b_k\|}b_k \in L_d$.
\end{enumerate}
\end{enumerate}
\end{Theorem}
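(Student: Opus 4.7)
The plan is to combine Lemma \ref{lem:limefhid}, which already guarantees $d_n \to d > 0$ and enumerates the three possible values of $d$, with Theorem \ref{theo:renorm}. Since $\lambda_H = \alpha_j$, the sign of $d - \alpha_j$ naturally partitions the analysis into three regimes that, after matching, will give the four sub-cases of the statement. If $d > \alpha_j = \lambda_H$, Theorem \ref{theo:renorm} yields $K_n \to L_d$, and Lemma \ref{lem:limefhid} identifies $d = \alpha_k + \|b_k\|$; the fixed point of $\frac{1}{d} f_k$ is $z_k = \frac{1}{d - \alpha_k} b_k = \frac{1}{\|b_k\|} b_k$, which lies in $L_d$ by Proposition \ref{prop:limerho}. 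This covers the limits asserted in (i)(b) and (ii)(b).

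If $d = \alpha_j$, then Lemma \ref{lem:limefhid} forces $b_j = 0$, so we are in case (ii). The lower bound $d_n \geq \rho(f_j(K_n)) = \alpha_j \rho(K_n) = \alpha_j$ combined with $d_n \uparrow \alpha_j$ forces $d_n = \alpha_j$ for all $n \geq 1$. Since $\frac{1}{\alpha_j} f_j = \id$, we get $K_n \subset K_{n+1}$, so $(K_n)$ is an increasing sequence in $B(0,1)$ and converges to $\cl\big(\bigcup_{n \geq 1} K_n\big)$, matching (ii)(a).

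If $d < \alpha_j$, then Lemma \ref{lem:limefhid} gives $d = \alpha_j - \|b_j\| > 0$ with $b_j \neq 0$, so we are in case (i), and the increasing sequence $(d_n)$ must be constant equal to $d$ from $n = 1$ on. The main step is to show $K_n = \{w\}$, where $w := -b_j/\|b_j\|$ is the fixed point of the expanding map $\frac{1}{d} f_j$, whose ratio $\alpha_j/d$ exceeds $1$. Indeed, if some $y \in K_n$ were distinct from $w$, iterating would give $\big(\frac{1}{d} f_j\big)^m(y) \in K_{n+m}$ with $\big\|\big(\frac{1}{d} f_j\big)^m(y) - w\big\| = (\alpha_j/d)^m \|y - w\| \to \infty$, contradicting $K_{n+m} \subset B(0,1)$. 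Once $K_n = \{w\}$ is established, $K_{n+1} = \frac{1}{d} H(\{w\}) = \{w\}$ yields the algebraic identity $f_i(w) = d w$ for each $i$, together with $K_0 \in H_\rho^{-1}(\{w\})$ — exactly the conditions of (i)(a).

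It remains to match the three regimes to the stated sub-cases. In case (i), $b_j \neq 0$ rules out $d = \alpha_j$, so we sit in either the first regime (giving (i)(b)) or the third (giving (i)(a)). In case (ii), the assumption $\alpha_j \geq \alpha_k + \|b_k\|$ combined with Lemma \ref{lem:bounddn} gives $d \leq \alpha_k + \|b_k\| \leq \alpha_j$, forcing $d = \alpha_j$ and placing us in (ii)(a). Under the opposite assumption $\alpha_j < \alpha_k + \|b_k\|$, the same escape idea rules out $d = \alpha_j$: if $\alpha_k < \alpha_j$, then $w_k := \frac{1}{\alpha_j - \alpha_k} b_k$ is the attractive fixed point of $\frac{1}{\alpha_j} f_k$ with norm $\|b_k\|/(\alpha_j - \alpha_k) > 1$, so iterating would place $w_k$ in the bounded limit $\cl\big(\bigcup K_n\big) \subset B(0,1)$; the border case $\alpha_k = \alpha_j$ (which forces $b_k \neq 0$) is similar, since $\frac{1}{\alpha_j} f_k$ is then a nontrivial translation whose iterates leave every bounded set. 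The main obstacle is precisely this expansion/translation dynamics argument, which is what pins down $K_n$ as a singleton in the third regime and rules out the second regime in (ii)(b).
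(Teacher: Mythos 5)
Your proof is correct and follows essentially the same strategy as the paper's: Lemma~\ref{lem:limefhid} for the convergence of $(d_n)_n$ and its possible limits, the trichotomy on the sign of $d-\alpha_j$, Theorem~\ref{theo:renorm} in the regime $d>\alpha_j$, the stationarity $d_n=d$ plus the expansion argument around $u=-\frac1{\|b_j\|}b_j$ forcing $K_n=\{u\}$ when $d<\alpha_j$, and monotonicity of $(K_n)_n$ when $d=\alpha_j$. The single point of divergence is in (ii)(b): the paper rules out $d=\alpha_j$ by invoking the Ces\`aro-type bound $\|b_k\|\leqslant|d-\alpha_k|$ of Proposition~\ref{prop:majd}, whereas you give a self-contained dynamical argument (the attractive fixed point of $\frac1{\alpha_j}f_k$ would have norm $\frac{\|b_k\|}{\alpha_j-\alpha_k}>1$ yet be a limit of points of the $K_{n+m}\subset B(0,1)$, and in the border case $\alpha_k=\alpha_j$ the map is a nontrivial translation whose iterates leave $B(0,1)$); both are valid, and yours has the minor advantage of not needing Proposition~\ref{prop:majd} at that step.
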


\begin{proof}(i) Assume that $b_j\neq 0$. Hence by Lemma \ref{lem:limefhid} we have $d\neq \alpha_j$.

(a) Suppose first that $d<\alpha_j$. Then, use of Lemma \ref{lem:limefhid} again shows that $d=\alpha_j-\|b_j\|$. In particular $\alpha_j-\|b_j\|\neq 0$ and $\alpha_j\neq 0$. Moreover, it follows from~\eqref{eq:minordj} that $d_n=d$ for all $n\geqslant1$. Let $x_n\in K_n$ and consider the sequence $(x_{n+k})_k$ defined by $x_{n+k+1}=\frac1{d_{n+k}}(\alpha_j x_{n+k} + b_j)=\frac1{d}f_j(x_{n+k})$ for all $k\geqslant0$. Notice that $x_{n+k} \in K_{n+k}$ so in particular $\|x_{n+k}\|\leqslant 1$. Let us introduce $u=-\frac1{\|b_j\|}b_j$ the unique point such that $f_j(u)=du$. Noticing that $x_{n+k+1}-u=\frac{\alpha_j}{d}(x_{n+k}-u)$ we obtain by induction that
\begin{gather*}
\forall\,k\geqslant0, \qquad 2\geqslant\|x_{n+k}-u\|=\left(\frac{\alpha_j}{d}\right)^{k}\|x_n-u\|\geqslant0.
\end{gather*}
Since $d<\alpha_j$ we must have $\|x_n-u\|=0$. It follows that $K_n=\{u\}$ for all $n\geqslant1$. Thus $f_i(u)=du$ for all $i\in\{1,\ldots,p\}$ and $K=\{u\}$. Therefore conditions of (a) are all fulfilled. Conversely, if they are satisfy we have obviously $K_n=K_1$ for all $n\geqslant0$ and the result.

(b) Suppose now that $d>\alpha_j$. Then Lemma \ref{lem:limefhid} implies that $d=\alpha_k+\|b_k\|$. Since $d>\lambda_H$ the convergence to $L_d$ follows from Theorem \ref{theo:renorm}. Since $L_d$ is the attractor of the IFS $\big\{\frac1{d}f_1,\ldots,\frac1{d}f_p\big\}$, it contains the invariant point $z_k$ of $\frac1{d}f_k$ which is $z_k=\frac1{\|b_k\|}b_k$.

\vspace{0.1cm}(ii) Assume that $b_j=0$. Hence by \eqref{eq:minordj} we have $d\geqslant\alpha_j$.

(a) Suppose first that $\alpha_j\geqslant \alpha_k+\|b_k\|$. Then it follows from Lemma \ref{lem:bounddn} that $d=\alpha_j$ and then by~\eqref{eq:minordj} that $d_n=d$. Therefore, for all
$n\geqslant1$,
\begin{gather*}
K_{n+1} = \frac1{\alpha_j} \bigcup_{i=1}^p f_i(K_n) = K_n \cup \bigcup_{i\neq j}^p f_i(K_n).
\end{gather*}
Thus $(K_n)_{n\geqslant1}$ is increasing. Since it is bounded it converges to $\cl(\bigcup_{n\geqslant 1} K_n)$.

(b) Suppose now that $\alpha_j<\alpha_k+\|b_k\|$. Assume that $d=\alpha_j$. Then inequality~\eqref{eq:propmajd1} with $i=k$ yields either $d\geqslant \alpha_k+\|b_k\|$ or $d\leqslant \alpha_k-\|b_k\|$. This latter being the unique possibility we obtain $\alpha_k\leqslant \alpha_j\leqslant \alpha_k-\|b_k\|$. Thus $b_k=0$ and $\alpha_j=\alpha_k$ which is a contradiction. Therefore $d>\alpha_j$ and we conclude along the same lines as for~(i)(b).
\end{proof}

Let us note that, when $D=1$, the unit sphere being finite, we can prove that $(d_n)_n$ is always stationary.

\begin{Example}\label{exa:limhomo1} Let us consider the IFS $\{f_1,f_2,f_3\}$ where the $f_i\colon {\mathbb R}^2\to {\mathbb R}^2$ are given by $f_i(x)=2x+b_i$ with
\begin{gather*} b_1 = \left[\begin{matrix}
0 \\
0
\end{matrix}\right] ,\qquad b_2 = \left[ \begin{matrix}
2 \\
0
\end{matrix}\right]
\qquad \text{and}\qquad b_3 = \left[ \begin{matrix}
0 \\
2 \\
\end{matrix}\right].\end{gather*}
Then, for all $K_0\in {\cal K}$, the sequence $(K_n)_n$ converges to the attractor of the IFS $\big\{\frac1{4}f_1,\frac1{4}f_2,\frac1{4}f_3\big\}$. It is a classical Sierpinski gasket (see Fig.~\ref{fig:limhomo}(a)).
\end{Example}

\begin{proof}We apply Theorem \ref{theo:radius} with $\max\limits_{1\leqslant i\leqslant 3}\{\alpha_i+\|b_i\|\}=4$ and $\max\limits_{1\leqslant i\leqslant 3}\{\alpha_i\}=2$ (notice here that indices $k$ and $j$ are not unique). Whatever is the choice of $k$ and $j$, we are here in the case (b). We have $d=4$ and $(1,0)\in L_d$, $(0,1)\in L_d$.
\end{proof}

\begin{Example}\label{exa:limhomo2}Let us consider the IFS $\{f_1,f_2,f_3\}$ where the $f_i\colon {\mathbb R}^2\to {\mathbb R}^2$ are given by $f_1(x)=6x+b_1$, $f_2(x)=4x+b_2$ and $f_3(x)=3x+b_3$ with
\begin{gather*} b_1 = \left[\begin{matrix}
0 \\
0
\end{matrix}\right],\qquad b_2 = \left[ \begin{matrix}
0 \\
1
\end{matrix}\right]
\qquad \text{and}\qquad b_3 = \left[ \begin{matrix}
-2 \\
2
\end{matrix}\right].\end{gather*}
Then, for all $K_0\in {\cal K}$, the sequence $(K_n)_n$ is increasing and converges to the set $\cl\big(\bigcup_{n\geqslant 1} K_n\big)$ (see Fig.~\ref{fig:limhomo}(b) for $K_0=\{0\}\times[0,1]$).
\end{Example}

\begin{proof}We apply Theorem \ref{theo:radius} with $\max\limits_{1\leqslant i\leqslant 3}\{\alpha_i+\|b_i\|\}=5$ and $\max\limits_{1\leqslant i\leqslant 3}\{\alpha_i\}=6$. Thus we are here in the case~(ii)(a).
\end{proof}

\begin{figure}[!h]\centering
\begin{minipage}{75mm}\centering
\includegraphics[scale=0.27]{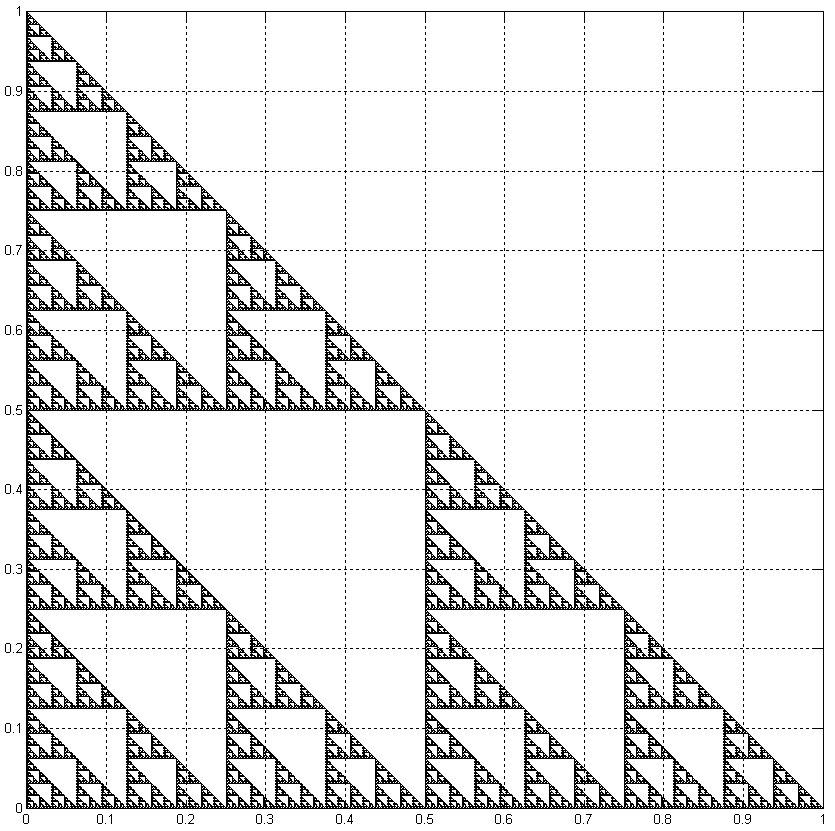}\\
{\footnotesize (a) a classical Sierpinski gasket}
\end{minipage}
\qquad
\begin{minipage}{75mm}\centering
\includegraphics[scale=0.275]{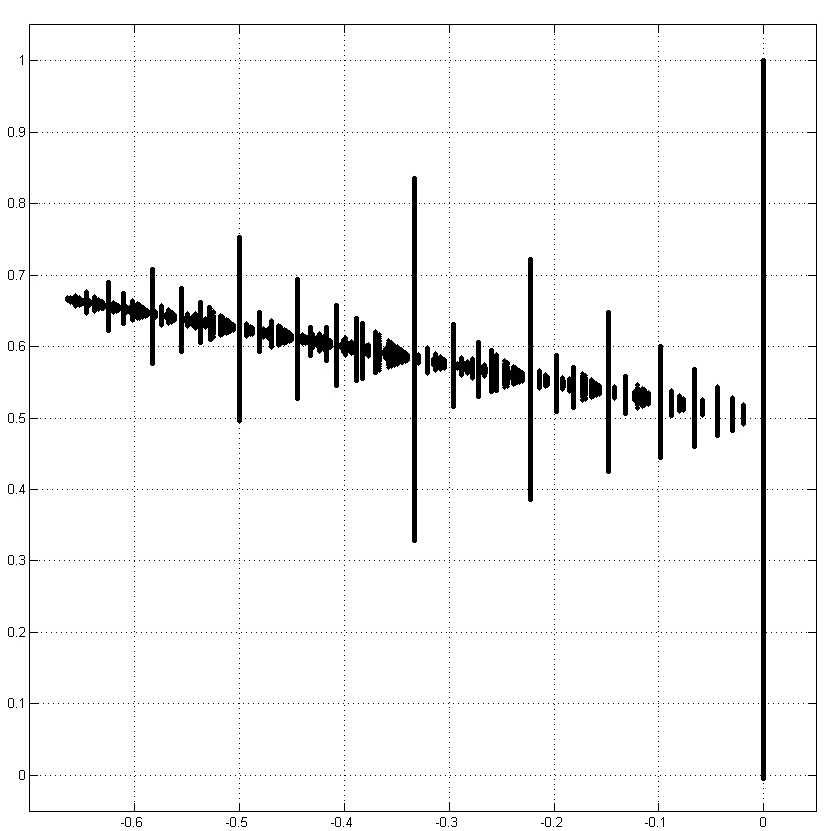}\\
{\footnotesize (b)~a union of vertical segments}
\end{minipage}
\caption{The limit set $K$ obtained by renormalizing the IFS $\{f_1,f_2,f_3\}$ with the radius function~$\rho$. Maps $f_1$, $f_2$, $f_3$ are given in Example~\ref{exa:limhomo1} for (a) and Example~\ref{exa:limhomo2} for~(b).}\label{fig:limhomo}
\end{figure}

We can observe that in the previous theorem the asymptotics of $(d_n)_n$ was given by the points of $B(0,1)$ whose image by the $f_i$'s have the largest norm. Exploiting this remark we can obtain a more general result assuming only one function $f_i$ has a homothety linear part but which is responsible of large norms.

\begin{Proposition}\label{prop:homosolo} Assume that $f_p(x)=\alpha x+b$ with $\alpha\geqslant 0$ and that
\begin{gather*}
\forall\,i\in\{1,\ldots,p-1\},\qquad f_i(B(0,1)) \subset B(0,|\alpha-\|b\||).
\end{gather*}
Then, for all $K_0\in {\cal K}$, the sequence $(K_n)_n$ converges to a set $K\in{\cal K}$. Precisely,
\begin{enumerate}\itemsep=0pt
\item[$(i)$] if $b\neq 0$ then
\begin{enumerate}\itemsep=0pt
\item[$(a)$] either $\alpha-\|b\|>0$, $f_i\big({-}\frac1{\|b\|}b\big)=(\alpha-\|b\|)\big({-}\frac1{\|b\|}b\big)$ for all $i\in\{1,\ldots,p\}$ and $K_0=H_{\rho}^{-1}\big(\big\{{-}\frac1{\|b_j\|}b_j\big\}\big)$: in this case $K=\big\{{-}\frac1{\|b\|}b\big\}$,
\item[$(b)$] or else $K$ does not depend on $K_0$: it is the attractor $L_d$ with $d=\alpha+\|b\|$ and $\frac1{\|b\|}b \in L_d$;
\end{enumerate}
\item[$(ii)$] if $b=0$ then $K=\cl\big({\bigcup}_{n\geqslant 1} K_n\big)$.
\end{enumerate}
\end{Proposition}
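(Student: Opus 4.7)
The plan is to reduce the proof to the single-homothety case treated in Lemma \ref{lem:limefhid} and Theorem \ref{theo:radius}, by showing that under the hypothesis the map $f_p$ alone drives the sequence $(d_n)_n$. First I would establish the dominance of $f_p$: for $n \geqslant 1$ the set $K_n$ lies in $B(0,1)$ and contains some $x_n$ with $\|x_n\|=1$, so for each $i<p$ one has $\rho(f_i(K_n)) \leqslant |\alpha - \|b\||$ by hypothesis, while $\rho(f_p(K_n)) \geqslant \|\alpha x_n + b\| \geqslant |\alpha - \|b\||$. Hence $d_n = \rho(f_p(K_n))$. The monotonicity argument used in the proof of Lemma \ref{lem:limefhid} then shows that $(d_n)_n$ is increasing and bounded, so it converges to some $d > 0$, and Proposition \ref{prop:majd} applied with $i=p$ (where $A_p = \alpha\id$) delivers the key bound $\|b\| \leqslant |d - \alpha|$.

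Next I would treat each case in parallel with Theorem \ref{theo:radius}. If $b = 0$ (case (ii)), then $f_p(K_n) = \alpha K_n$, so $d_n = \alpha\rho(K_n) = \alpha$, and the identity $K_{n+1} = K_n \cup \bigcup_{i<p}(1/\alpha)f_i(K_n)$ shows that $(K_n)_{n\geqslant 1}$ is increasing; being bounded in $B(0,1)$, it converges to $\cl\big(\bigcup_{n\geqslant 1} K_n\big)$. If $b \neq 0$, then $d \neq \alpha$. In sub-case (i)(a), $d < \alpha$ combined with $\|b\| \leqslant \alpha - d$ forces $d = \alpha - \|b\|$ (so in particular $\alpha > \|b\|$); since $(d_n)_n$ is increasing with lower bound $\alpha - \|b\|$, one has $d_n = d$ for every $n \geqslant 1$. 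Following the iteration argument of Theorem \ref{theo:radius}(i)(a) with the expansive map $(1/d)f_p$ and its fixed point $u = -b/\|b\|$: for any $x_n \in K_n$ the orbit $x_{n+k+1} = (1/d)f_p(x_{n+k})$ satisfies $\|x_{n+k}-u\| = (\alpha/d)^k \|x_n - u\|$ and stays in $B(0,1)$, which forces $x_n = u$. Hence $K_n = \{u\}$ for all $n \geqslant 1$, which imposes $f_i(u) = du$ for every $i$ together with $K_0 = H_\rho^{-1}(\{u\})$; the converse implication is immediate.

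In sub-case (i)(b), $d > \alpha$ combined with $\|b\| \leqslant d - \alpha$ and the upper bound $d_n \leqslant \alpha + \|b\|$ coming from Lemma \ref{lem:bounddn} forces $d = \alpha + \|b\|$. To invoke Theorem \ref{theo:renorm} and identify the limit with $L_d$, I need the strict inequality $d > \lambda_H$, and this is the main technical point: I would use the averaging identity $A_i x = \tfrac{1}{2}(f_i(x) - f_i(-x))$, together with the hypothesis $f_i(B(0,1)) \subset B(0,|\alpha-\|b\||)$, to deduce $\|A_i\| \leqslant |\alpha - \|b\||$ for every $i<p$. It follows that $\lambda_H \leqslant \max(\alpha,|\alpha-\|b\||) < \alpha + \|b\| = d$, so Theorem \ref{theo:renorm} yields convergence of $(K_n)_n$ to the attractor $L_d$ of $\{(1/d)f_1,\ldots,(1/d)f_p\}$; the invariant point $(d\id - A_p)^{-1}b = b/\|b\|$ of $(1/d)f_p$ lies in $L_d$ by construction.

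The main obstacle is verifying the strict inequality $d > \lambda_H$ in case (i)(b), since a priori $\lambda_H$ could be as large as $|\alpha - \|b\||$; the averaging identity above is what closes that gap. Every other step is a direct adaptation of the single-homothety analysis already performed in the proofs of Lemma \ref{lem:limefhid} and Theorem \ref{theo:radius}.
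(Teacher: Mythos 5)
Your proposal is correct and follows essentially the same route as the paper: establish that $f_p$ realizes $d_n$, reuse the monotonicity argument of Lemma~\ref{lem:limefhid} and the bound $\|b\|\leqslant|d-\alpha|$ from Proposition~\ref{prop:majd} to force $d\in\{\alpha-\|b\|,\alpha+\|b\|\}$, and then transplant the three cases of Theorem~\ref{theo:radius}. Your averaging identity $A_ix=\tfrac12\big(f_i(x)-f_i(-x)\big)$ is simply an explicit justification of the inequality $\|A_i\|\leqslant\sup_{\|x\|=1}\|A_ix+b_i\|$ that the paper asserts without comment in case~(i)(b), so it is a welcome clarification rather than a different argument.
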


\begin{proof} Actually the proof is very similar to the one of Theorem~\ref{theo:radius} thus we will only detail the key-points.
The first point is to show that $d_n$ is always obtained with the function~$f_p$. Indeed, let $n\geqslant 1$ and $x_n\in K_n$ with $\|x_n\|=1$. Then, for all $i\in\{1,\ldots,p-1\}$,
\begin{gather*}
\|f_p(x_n)\| = \|\alpha x_n+ b\| \geqslant | \|\alpha x_n\| - \|b\| | = | \alpha - \|b\| | \geqslant \rho(f_i(B(0,1))) \geqslant \rho(f_i(K_n)).
\end{gather*}
It follows that $d_n=f_p(y_n)$ for some $y_n\in K_n$ and $d_n\geqslant | \alpha - \|b\| |$.

Considering now $u_n=\frac1{d_n}(\alpha y_n + b)$ we get $u_n\in K_{n+1}$, $\|u_n\|=1$ and, since $\|y_n\|\leqslant 1$,
\begin{align*}
d_{n+1}\geqslant \|\alpha u_n + b\| \geqslant \|\alpha u_n + d_n u_n\| - \|d_n u_n - b\| = (\alpha + d_n)\|u_n\| - \alpha\|y_n\|\geqslant d_n.
\end{align*}
Thus $(d_n)_{n\geqslant1}$ is increasing, bounded, so it converges. Let $d$ be its limit.

In particular, we have proved that
\begin{gather}\label{eq:ineghomosolo}
\alpha - \|b\| \leqslant | \alpha - \|b\| | \leqslant d_n \leqslant d \leqslant \alpha + \|b\|.
\end{gather}

Besides, inequality \eqref{eq:propmajd1} with $i=p$ writes $\|b\|\leqslant |d-\alpha|$ hence either $d\geqslant \alpha+\|b\|$ or $d\leqslant \alpha-\|b\|$. Finally, $d\in\{\alpha-\|b\|,\alpha+\|b\|\}$.

(i) Assume that $b\neq 0$. Hence $d\neq \alpha$.

(a) Suppose first that $d=\alpha-\|b\|$. This case is similar to the case (i)(a) of Theorem \ref{theo:radius}.

(b) Suppose now that $d=\alpha+\|b\|$. First, $d>\alpha$. Next, for $i\in\{1,\ldots,p-1\}$,
\begin{gather*}
\|A_i\| \leqslant \sup_{\|x\|=1}\{ \|A_i x +b_i \| \} \leqslant | \alpha-\|b\| | < \alpha + \|b\|.
\end{gather*}
It follows that $d>\lambda_H$ and we conclude as for the case (i)(b) of Theorem \ref{theo:radius}.

(ii) Assume that $b=0$. It follows from \eqref{eq:ineghomosolo} that $d_n=d=\alpha$ for all $n\geqslant1$. This case is then similar to the case (ii)(a) of Theorem \ref{theo:radius}.
\end{proof}

\section{Results for linear IFS's}\label{sec:linear}

We suppose in this section that the IFS consists in $p\geqslant1$ linear maps $f_i\colon {\mathbb R}^D\rightarrow {\mathbb R}^D$ defined by $f_i(x)=A_ix$. We still denote by ${\cal M}=\{A_1,\ldots,A_p\}$ the set of their canonical matrices.

\subsection{New strategy}\label{subsec:cvge}

If $(d_n)_n$ converges to $d$, it follows from Lemma \ref{lem:bounddn} that $d\leqslant \lambda_H$ so we cannot apply Theo\-rem~\ref{theo:renorm}. Actually the convergence of $(d_n)_n$ may not imply the convergence of $(K_n)_n$. Consider for example $D=2$ and the functions $f_1$, $f_2$ defined by
\begin{gather*} A_1=\left[\begin{matrix}
2 & 0 \\
0 & -3
\end{matrix}\right]\qquad \text{and}\qquad A_2=\left[\begin{matrix}
1 & 0 \\
0 & -1
\end{matrix}\right].\end{gather*}
If $K_0=\{(1,0)\}$ then $K_n=\big\{ \big(2^{-k},0\big)\colon 0\leqslant k \leqslant n\big\}$ hence converges to $\cl\big(\bigcup_{n\geqslant0} K_n\big)$ and $(d_n)_n$ is constant to $2<3=\lambda_H$. If $K_0=\{(0,1)\}$ then $K_n=(-1)^n\big\{ \big(0,3^{-k}\big) \colon 0\leqslant k \leqslant n\big\}$ hence diverges but~$(d_n)_n$ is constant to~$\lambda_H$.

Therefore we adopt here a new strategy, taking advantage of both the linearity of the $f_i$'s and the homogeneity of $\rho$.

\begin{Proposition}\label{prop:linred}Let $K_0\in{\cal K}$. Assume that there exists $d>0$ such that $\big(H_d^n(K_0)\big)_n$ converges to a set $L\in{\cal K}$ with $\rho(L)\neq0$. Then,
\begin{enumerate}\itemsep=0pt
\item[$(i)$] $(d_n)_n$ converges to $d$ and $d\leqslant \sigma_{{\cal M}}$,
\item[$(ii)$] $(K_n)_n$ converges to $K=\frac1{\rho(L)}\,L$.
\end{enumerate}
\end{Proposition}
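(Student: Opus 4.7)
The strategy is to exploit the linearity of the maps $f_i$ (so that $H(\alpha K)=\alpha H(K)$ for $\alpha>0$) in order to relate the renormalized orbit $(K_n)_n$ to the classical orbit $(H_d^n(K_0))_n$ by a single time-dependent scaling factor, and then pin down the scaling using the fact that $\rho(K_n)=1$ for $n\geqslant 1$.

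More precisely, set $D_n=d_0d_1\cdots d_{n-1}$ (with $D_0=1$). Iterating the recursion $K_{n+1}=\frac{1}{d_n}H(K_n)$ and using the positive homogeneity of $H$ yields $K_n=\frac{1}{D_n}H^n(K_0)$; similarly $H_d^n(K_0)=\frac{1}{d^n}H^n(K_0)$, so
\begin{gather*}
K_n=\frac{d^n}{D_n}\,H_d^n(K_0).
\end{gather*}
Applying the radius function and using $\rho(K_n)=1$ for $n\geqslant1$ together with the homogeneity of $\rho$ gives $r_n:=D_n/d^n=\rho(H_d^n(K_0))$. Continuity of $\rho$ for $\dha$ (listed among its basic properties) implies $r_n\to\rho(L)>0$.

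This immediately yields both parts of the conclusion. For (i), write $d_n=D_{n+1}/D_n=d\cdot r_{n+1}/r_n$, which tends to $d$ since $r_n\to\rho(L)\neq 0$. For (ii), $K_n=r_n^{-1}H_d^n(K_0)$ is the product of a convergent scalar sequence $(r_n^{-1})\to 1/\rho(L)$ and a convergent sequence of compacts $H_d^n(K_0)\to L$; since scalar multiplication is jointly continuous on $(\mathbb{R}\times{\cal K},\dha)$ (using the homogeneity of $\dha$), the limit is $K=L/\rho(L)$, which has $\rho(K)=1$ as required.

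It remains to establish $d\leqslant\sigma_{{\cal M}}$; this is the only non-formal part of the argument. I would argue by contradiction: suppose $d>\sigma_{{\cal M}}$ and pick $\sigma\in(\sigma_{{\cal M}},d)$. The $\limsup$ definition \eqref{eq:JSR} furnishes an index $N$ with $\sup_{i_1,\ldots,i_n}\|A_{i_1}\cdots A_{i_n}\|\leqslant \sigma^n$ for all $n\geqslant N$. Expanding $H^n(K_0)=\bigcup_{i_1,\ldots,i_n}(A_{i_1}\cdots A_{i_n})(K_0)$ and using $K_0\subset B(0,R)$ for some $R>0$ gives $H^n(K_0)\subset B(0,R\sigma^n)$, so
\begin{gather*}
H_d^n(K_0)\subset B\big(0,R(\sigma/d)^n\big)\longrightarrow\{0\}
\end{gather*}
in Hausdorff distance. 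Hence $L=\{0\}$ and $\rho(L)=0$, contradicting the hypothesis. This last step is the one requiring care, since the joint spectral radius is defined via a $\limsup$ rather than a uniform estimate, so one must first shrink $\sigma$ slightly above $\sigma_{{\cal M}}$ to obtain a genuine geometric bound that dominates the supremum over all length-$n$ products.
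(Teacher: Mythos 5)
Your proof is correct and follows essentially the same route as the paper: both exploit linearity to write $K_n = \frac{1}{d_0\cdots d_{n-1}}H^n(K_0)$, identify $\frac{d_0\cdots d_{n-1}}{d^n}$ with $\rho\big(H_d^n(K_0)\big)$, and let continuity and homogeneity of $\rho$ deliver both the convergence $d_n\to d$ and the limit $K=\frac{1}{\rho(L)}L$. The one place you diverge is the bound $d\leqslant\sigma_{{\cal M}}$: the paper observes that $d>\sigma_{{\cal M}}$ would make the joint spectral radius of $\big\{\frac1{d}A_1,\ldots,\frac1{d}A_p\big\}$ strictly less than $1$ and cites Barnsley--Vince for the conclusion $H_d^n(K_0)\to\{0\}$, whereas you prove that convergence directly from the $\limsup$ definition by extracting a uniform bound $\sup\|A_{i_1}\cdots A_{i_n}\|\leqslant\sigma^n$ for some $\sigma\in(\sigma_{{\cal M}},d)$ and all large $n$, giving $H_d^n(K_0)\subset B\big(0,R(\sigma/d)^n\big)\to\{0\}$. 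Your version is self-contained, and the care you take in first shrinking to a strict $\sigma>\sigma_{{\cal M}}$ before claiming a geometric bound is exactly the right precaution; the paper reaches the same contradiction with less work by leaning on the literature.
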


\begin{proof} Let $n\geqslant 1$. We obtain by linearity
\begin{gather*}\label{eq:linprod}
K_n = \frac1{d_0\cdots d_{n-1}} H^n(K_0).
\end{gather*}
Since $\rho(K_n)=1$, it follows by homogeneity that $\rho\big(H^n (K_0)\big) = d_0\cdots d_{n-1}$. Using linearity again we observe that $H_d^n(K_0)= \frac1{d^n} H^n(K_0)$. Thus $\rho\big(H_d^n(K_0)\big) = \prod\limits_{k=0}^{n-1}\frac{d_k}{d}$. By hypothesis, this last sequence is a proper convergent product, hence $\frac{d_k}{d} \to 1$. Moreover, if $d>\sigma_{{\cal M}}$ then the joint spectral radius of $\big\{\frac1{d}A_1,\ldots,\frac1{d}A_p\big\}$ is $\frac{\sigma_{{\cal M}}}{d}<1$, hence $\big(H_d^n(K_0)\big)_n$ converges to $\{0\}$ (see~\cite{barvin11}) which is a contradiction. Thus we get $(i)$. Finally,
\begin{gather*}\label{eq:prooflinred}
K_n = \frac1{\rho\big(H^n (K_0)\big)} H^n (K_0) = \frac1{\rho\big(H_d^n (K_0)\big)} H_d^n (K_0).
\end{gather*}
Hypotheses and continuity of $\rho$ allow us to take the limit in the right-hand side above. That gives~(ii).
\end{proof}

As we saw in Proposition \ref{prop:limJSR} if such a $d$ exists then $d=\sigma_{{\cal M}}$ for a large class of IFS's. We can expect that this is true in general. However, the first example in this section shows that the strict inequality is possible even for very simple IFS's. Actually the hypothesis on the common invariant subspaces of the $A_i$'s is essential. Notice that the $A_i$'s share a common non trivial invariant subspace if and only if there exists an invertible matrix $P$ such that, for all $i\in\{1,\ldots,p\}$,
\begin{gather*} A_i = P\left[\begin{matrix}
A'_i & M_i \\
0 & A''_i
\end{matrix}\right]P^{-1} \end{gather*}
with $A'_i$ and $A''_i$ square and some matrix $M_i$ (see~\cite{theys}). This in particular the case of diagonal matrices, where the numerous invariant spaces will provide very special behaviors for $(K_n)_n$. In the rest of this section, we will look at such IFS's focusing on the convergence of $\big(H_d^n(K_0)\big)_n$ especially for $d=1$.

\subsection{LCP sets of matrices}\label{subsec:linlcpsets}

We say that ${\cal M}$ is a \textit{left convergent product set of matrices} (LCP set in short) if the infinite products $A_{i_n}\cdots A_{i_1}$ converge for all sequences $(i)=(i_1,i_2,\dots)\in{\cal I}=\{1,\ldots,p\}^{\infty}$. In this case, we set $A_{(i)} = \lim\limits_{n\to \infty} A_{i_n}\cdots A_{i_1}$ (see~\cite{daub1,hart}). The theory of LCP sets was popularized in the 90s (see~\cite{daub1}) and it is still of interest nowadays (see~\cite{hart}) for example in the study of inhomogeneous Markov chains (see, e.g.,~\cite{sene}). One can always associate a canonical IFS with a LCP set. The next result gives sufficient conditions to obtain its convergence.

\begin{Lemma}\label{lem:cvglcp}Assume that
\begin{enumerate}\itemsep=0pt
\item[$(i)$] ${\cal M}$ is a $\mathrm{LCP}$ set,
\item[$(ii)$] There exists a sequence $(\eps_n)_n$ of positive numbers such that $\eps_n \to 0$ and
\begin{gather}\label{eq:cvglcp}
\forall\,(i)=(i_1,i_2,\dots)\in{\cal I},\qquad \forall\,n\geqslant 1, \qquad \|A_{(i)}-A_{i_n}\cdots A_{i_1}\|\leqslant \eps_n.
\end{gather}
\end{enumerate}
Then, $\big(H^n(K_0)\big)_n$ converges for all $K_0\in {\cal K}$ to the limit set
\begin{gather}\label{eq:limlcp}
L= \cl\left(\bigcup_{(i)\in{\cal I}} A_{(i)}(K_0)\right).
\end{gather}
\end{Lemma}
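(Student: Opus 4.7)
The plan is to estimate the Hausdorff distance $\dha(H^n(K_0),L)$ directly using the uniform bound on partial products provided by hypothesis~(ii). Iterating $H(K)=\bigcup_{i=1}^p A_i(K)$ yields
$$H^n(K_0)=\bigcup_{i_1,\ldots,i_n\in\{1,\ldots,p\}} A_{i_n}\cdots A_{i_1}(K_0),$$
a finite union of compact sets, hence an element of~${\cal K}$. That $L$ itself lies in~${\cal K}$ follows from the uniform norm bound $\|A_{(i)}\|\leqslant \|A_{i_1}\|+\|A_{(i)}-A_{i_1}\|\leqslant \max_j\|A_j\|+\eps_1$, valid for every $(i)\in{\cal I}$, which makes $\bigcup_{(i)\in{\cal I}}A_{(i)}(K_0)$ bounded.

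Both inclusions in the Hausdorff bound rest on the same elementary inequality. For any $x\in K_0$ and any finite word $(i_1,\ldots,i_n)$, extend it to an infinite word $(i)=(i_1,\ldots,i_n,j_{n+1},\ldots)\in{\cal I}$ (for instance by padding with~$1$'s). Hypothesis~(ii) then gives
$$\|A_{i_n}\cdots A_{i_1}(x)-A_{(i)}(x)\|\leqslant \|A_{i_n}\cdots A_{i_1}-A_{(i)}\|\,\|x\|\leqslant \eps_n\rho(K_0).$$
Since $A_{(i)}(x)\in L$, this proves $H^n(K_0)\subset L\bigl(\eps_n\rho(K_0)+\delta\bigr)$ for every $\delta>0$.

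For the reverse inclusion, let $y\in L$: by definition of the closure there exist $(i^{(k)})\in{\cal I}$ and $x_k\in K_0$ with $A_{(i^{(k)})}(x_k)\to y$. For each $k$ the truncated point $A_{i^{(k)}_n}\cdots A_{i^{(k)}_1}(x_k)$ lies in $H^n(K_0)$ and is within $\eps_n\rho(K_0)$ of $A_{(i^{(k)})}(x_k)$ by the estimate above. Since $H^n(K_0)$ is compact one extracts a convergent subsequence with limit $z\in H^n(K_0)$, and passing to the limit in $k$ gives $\|y-z\|\leqslant \eps_n\rho(K_0)$. Combining the two inclusions yields $\dha(H^n(K_0),L)\leqslant \eps_n\rho(K_0)\to 0$, as required. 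The only subtlety I anticipate is this last passage to the limit: one must fix $n$ and first let $k\to\infty$, exploiting the compactness of $H^n(K_0)$, rather than trying to interchange the two limits; everything else reduces to the uniformity of hypothesis~(ii).
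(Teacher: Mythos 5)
Your proof is correct and follows essentially the same route as the paper: the uniform estimate $\|A_{i_n}\cdots A_{i_1}(x)-A_{(i)}(x)\|\leqslant \eps_n\rho(K_0)$ yields both inclusions and hence $\dha\big(H^n(K_0),L\big)\leqslant \eps_n\rho(K_0)\to 0$. The only cosmetic differences are that you bound $L$ using hypothesis~(ii) at $n=1$ where the paper invokes product-boundedness of LCP sets, and that you handle the closure by a compactness/subsequence argument where the paper simply works with the un-closed union $L'$ and uses $\dha\big(H^n(K_0),L'\big)=\dha\big(H^n(K_0),\cl(L')\big)$.
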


\begin{proof} Let us write $K_n'=H^n(K_0)$ and $L'= \bigcup_{(i)\in{\cal I}} A_{(i)}(K_0)$. Hypothesis (i) implies that ${\cal M}$ is product bounded (see \cite{berg}), then there exists $R>0$ such that $\|A_{(i)}\|\leqslant R$ for all $(i)\in{\cal I}$. Since $K_0$ is compact, it follows that $L'$ is bounded, hence $L$ is compact. We claim that $\dha(K_n',L')\leqslant C\eps_n$ for all $n\geqslant1$, $C>0$. Let $n\geqslant1$ be fixed. We have
\begin{gather*}
K_n'=\{A_{i_n}\cdots A_{i_1}(x_0) \colon x_0\in K_0 \text{ and } 1\leqslant i_1,\ldots,i_n\leqslant p\}.
\end{gather*}
Let $x'\in L'$. One has $x'=A_{(i)}(x_0)$ with $x_0\in K_0$ and $(i)=(i_1,i_2,\dots,i_n,\dots)\in{\cal I}$. Let $x=A_{i_n}\cdots A_{i_1}(x_0)$. One has $x\in K_n'$ and $\|x'-x\| \leqslant \|A_{(i)}-A_{i_n}\cdots A_{i_1}\| \|x_0\| \leqslant C\eps_n$ where $C=\rho(K_0)$. Thus $L'\subset K_n'(C\eps_n)$. We prove in a similar way that $K_n'\subset L'(C\eps_n)$, hence $\dha(K_n',L')\leqslant C\eps_n$. It follows that $\dha(K_n',L')\to 0$ and, since $\dha(K_n',L')=\dha(K_n',L)$, that $K_n'\to L$.
\end{proof}

We illustrate this result with the family of positive stochastic matrices in ${\mathbb R}^2$. In this case, we can give a precise description of the limit set $L$. Let us recall that a positive stochastic matrix is a matrix whose rows consist of positive real numbers, with each row summing to~$1$.

\begin{Proposition}\label{prop:limstoc2} Let $p\geqslant1$ linear maps $f_i\colon {\mathbb R}^2\to{\mathbb R}^2$ of the form $f_i(x)=A_ix$ where $A_i$ is a~positive stochastic matrix. Then, for all $K_0\in {\cal K}$, the sequence $\big(H^n(K_0)\big)_n$ converges to the set
\begin{gather*}
L=\cl\left(\bigcup_{v_0\in K_0} \big\{(x,x) \colon x\in h_{v_0}(\Gamma)\big\}\right),
\end{gather*}
where $h_{v_0}\colon {\mathbb R}\to {\mathbb R}$ is an affine map which depends on $v_0$ and $f_i$, and $\Gamma$ is the attractor of an $\mathrm{IFS}$ $\{g_1,\ldots,g_p\}$ where $g_i\colon {\mathbb R}\to {\mathbb R}$ is an affine map which only depends on $f_i$.
\end{Proposition}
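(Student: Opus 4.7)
The plan is to verify the hypotheses of Lemma~\ref{lem:cvglcp} and then to identify~\eqref{eq:limlcp} explicitly. Parametrize each matrix by $A_i=\left[\begin{matrix}1-a_i & a_i\\ b_i & 1-b_i\end{matrix}\right]$ with $a_i,b_i\in(0,1)$, so that $e=(1,1)^T$ is always an eigenvector with eigenvalue~$1$ while the second eigenvalue is $\alpha_i=1-a_i-b_i\in(-1,1)$. The linear functional $h(v)=v_1-v_2$ then satisfies $h(A_iv)=\alpha_i h(v)$, so that it contracts geometrically at rate $\overline{\alpha}=\max_i|\alpha_i|<1$ along every backward product.

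First I would prove that ${\cal M}$ is an LCP set with a uniform geometric rate. Fix $(i)\in{\cal I}$ and $v_0\in{\mathbb R}^2$, and set $v_n=A_{i_n}\cdots A_{i_1}v_0$. Decompose $v_n=c_n e+\frac{1}{2}h(v_n)(1,-1)^T$. Then $|h(v_n)|\leqslant\overline{\alpha}^n|h(v_0)|$, and a direct computation yields $c_{n+1}-c_n=\frac{1}{2}(b_{i_{n+1}}-a_{i_{n+1}})h(v_n)$, which is geometrically summable. Hence $(c_n)$ is Cauchy and $v_n\to v_\infty:=c_\infty e$, with $\|v_n-v_\infty\|\leqslant C\overline{\alpha}^n\|v_0\|$ for a constant $C$ depending only on~${\cal M}$. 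This provides both hypothesis~(i) and the uniform bound~\eqref{eq:cvglcp} of Lemma~\ref{lem:cvglcp} with $\eps_n=C\overline{\alpha}^n$.

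Next I would identify the limit matrices. Each $A_{(i)}$, being a limit of products of positive stochastic matrices, is stochastic, and by the previous step all its columns are multiples of~$e$. Thus $A_{(i)}$ has both rows equal to a probability vector $(y_{(i)},1-y_{(i)})$, and for $v_0=(u,v)^T$ one obtains $A_{(i)}v_0=h_{v_0}(y_{(i)})\,e$ with $h_{v_0}(y)=yu+(1-y)v$ affine in~$y$. Writing $A_{(i)}=A_{\sigma(i)}A_{i_1}$ (with $\sigma$ the shift) and multiplying out gives the recursion $y_{(i)}=b_{i_1}+\alpha_{i_1}y_{\sigma(i)}$. Setting $g_i(y)=b_i+\alpha_i y$ defines an affine contraction on~${\mathbb R}$ (since $|\alpha_i|<1$); the classical theory then gives a unique attractor $\Gamma\subset[0,1]$ for the IFS $\{g_1,\dots,g_p\}$, and the standard address-map description yields $\Gamma=\{y_{(i)}\colon(i)\in{\cal I}\}$.

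Finally, Lemma~\ref{lem:cvglcp} asserts $L=\cl\bigl(\bigcup_{(i)\in{\cal I}}A_{(i)}(K_0)\bigr)$, which by the description of $A_{(i)}v_0$ above rewrites as the announced $\cl\bigl(\bigcup_{v_0\in K_0}\{(x,x)\colon x\in h_{v_0}(\Gamma)\}\bigr)$. The main technical point is the uniform geometric estimate~\eqref{eq:cvglcp}; once the height--mean decomposition is available it reduces to routine bookkeeping, but it is the step one cannot skip, and the reason this clean picture is specific to dimension~$2$.
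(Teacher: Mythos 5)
Your proof is correct, and it reaches the same destination via the same key lemma (Lemma~\ref{lem:cvglcp}) but with genuinely different mechanics. The paper conjugates each $A_i$ to a triangular matrix $T_i=PT_iP^{-1}$... more precisely writes $A_i=PT_iP^{-1}$ with $T_i$ upper triangular, cites the literature for the LCP property, and then computes the finite products $T_{i_n}\cdots T_{i_1}$ in closed form: the top-right entry is $(g_{i_1}\circ\cdots\circ g_{i_{n-1}})(a_{i_n})$, so the set of $n$-fold products is controlled by the $n$th forward image of a finite set under the contractive Hutchinson operator of $\{g_1,\ldots,g_p\}$, which simultaneously yields the uniform rate $\eps_n=C'b^n$ and the identification of the limit matrices with points of $\Gamma$. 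You instead decompose vectors along the two eigendirections of a stochastic matrix, observe that the functional $h(v)=v_1-v_2$ is an exact eigenfunctional with eigenvalue $\alpha_i=1-a_i-b_i$, and get the LCP property, the uniform geometric bound, and the rank-one form of $A_{(i)}$ from scratch, then identify $\Gamma$ via the shift recursion $y_{(i)}=g_{i_1}(y_{\sigma(i)})$ and the address map rather than via explicit finite products. Your route is more self-contained (no appeal to the convergence theorem for products of positive stochastic matrices) and isolates cleanly why the rate is uniform, which, as you note, is the one hypothesis of Lemma~\ref{lem:cvglcp} that cannot be waved away; the paper's route has the advantage of producing an explicit formula for every finite product, hence a concrete constant in $\eps_n$ and a description of the pre-limit sets $H^n(K_0)$ as well. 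The two resulting parametrizations of $\Gamma$ and $h_{v_0}$ differ by an affine change of variable, which is immaterial to the statement.
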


\begin{proof} We will apply Lemma \ref{lem:cvglcp}. First, since each product $A_{i_n}\cdots A_{i_1}$ contains a positive stochastic matrix then it converges (see~\cite{bru}) and ${\cal M}$ is a LCP set. Next, there exists a matrix~$P$ of the form
\begin{gather*}
P = \left[\begin{matrix}
1 & u \\
1 & v
\end{matrix}\right] \qquad \text{with} \quad u,v\in {\mathbb R},
\end{gather*}
such that, for all $i\in\{1,\ldots,p\}$, $A_i=P T_i P^{-1}$ where $T_i$ is a matrix of the form
\begin{gather*}
T_i = \left[\begin{matrix}
1 & a_i \\
0 & b_i
\end{matrix}\right] \qquad \text{with} \quad a_i\in {\mathbb R} \qquad \text{and}\qquad b_i\in[0,1).
\end{gather*}
Notice that it is also proved in \cite{daub1} that $\{T_1,\ldots,T_p\}$ is a LCP set with a continuous limit function. Here we want more and describe precisely the limit set of matrices. Let us define $g_i\colon {\mathbb R}\to {\mathbb R}$ by $g_i(x)=b_ix+a_i$ and set $b=\max\limits_{1\leqslant i\leqslant p} b_i<1$. We obtain by induction that, for all sequences of indices $i_1,\ldots,i_n$, $n\geqslant 2$,
\begin{gather*}
T_{i_n}\cdots T_{i_1} = \left[\begin{matrix}
1 & (g_{i_1}\circ g_{i_2} \circ \cdots \circ g_{i_{n-1}})(a_{i_n}) \\
0 & b_{i_1}\cdots b_{i_n}
\end{matrix}\right].
\end{gather*}
Hence, considering the contractive Hutchinson operator $G$ associated with the IFS $\{g_1,\ldots,g_p\}$, its attractor $\Gamma$, and the orbit $\big(G^n(A)\big)_n$ of the compact set $A=\{a_1,\ldots,a_p\}$, we obtain, for all $n\geqslant 2$,
\begin{gather*}\label{eq:proofpropstoch}
\big\{ T_{i_n}\cdots T_{i_1}\big\}_{1\leqslant i_1,\ldots,i_n\leqslant p} = \left\{ \left[\begin{matrix}
1 & c_{i_1 \cdots i_n} \\
0 & b(c_{i_1 \cdots i_n})
\end{matrix}\right], \, c_{i_1 \cdots i_n}\in G^{n-1}(A)\right\}
\end{gather*}
with $0\leqslant b(c_{i_1 \cdots i_n}) \leqslant b^n$ and $\dd(c_{i_1 \cdots i_n},\Gamma)\leqslant C b^n$ for a constant $C>0$ (see~\cite{hutch1}).

It follows first that, for all $(i)\in{\cal I}$ and all $n\geqslant1$,
\begin{gather*}
\|A_{(i)}-A_{i_n}\cdots A_{i_1}\|=\big\|P\big(T_{(i)}-T_{i_n}\cdots T_{i_1}\big)P^{-1}\big\|\leqslant \|P\| \|T_{(i)}-T_{i_n}\cdots T_{i_1}\| \big\|P^{-1}\big\| \leqslant C' b^n
\end{gather*}
with $C'>0$. Hence \eqref{eq:cvglcp} and all the hypotheses of Lemma~\ref{lem:cvglcp} are satisfied with $\eps_n=C'b^n$.

Moreover, letting $n$ goes to $\infty$ we obtain the following set of limit matrices:
\begin{gather*}
\left\{ T_{(i)}\right\}_{(i)\in{\cal I}} = \left\{ \left[\begin{matrix}
1 & c \\
0 & 0
\end{matrix}\right], \, c\in \Gamma\right\}.
\end{gather*}

Therefore, if $v_0=(x_0,y_0)\in K_0$ we get
\begin{gather*}
A_{(i)}(v_0) = \left(P \left[\begin{matrix}
1 & c \\
0 & 0
\end{matrix}\right] P^{-1}\right) \left[\begin{matrix}
x_0 \\
y_0
\end{matrix}\right] = \left[\begin{matrix}
\dfrac{y_0-x_0}{v-u} c + \dfrac{x_0v-y_0u}{v-u} \vspace{1mm}\\
\dfrac{y_0-x_0}{v-u} c + \dfrac{x_0v-y_0u}{v-u}
\end{matrix}\right].
\end{gather*}
The result follows by taking $h_{v_0}(x)=\frac{y_0-x_0}{v-u} x + \frac{x_0v-y_0u}{v-u}$ and using~\eqref{eq:limlcp}.
\end{proof}

Notice that if $K_0\subset \Span\{(1,1)\}$ then $f_i(v_0)=v_0$ and $L=K_0$. Actually, $\Span\{(1,1)\}$ is a~common invariant space of the $A_i$'s so that $K_n=K_0$ for all $n\geqslant 0$. In particular, one cannot apply Proposition~\ref{prop:limJSR} but it follows from the decomposition of the $A_i$'s that $d=1=\sigma_{{\cal M}}$.

\begin{Example}\label{exa:limstoc}Let us consider the IFS $\{f_1,f_2\}$ where the $f_i\colon {\mathbb R}^2\to {\mathbb R}^2$ are the linear maps given by their canonical matrices
\begin{gather*} A_1 = \frac1{4}\left[\begin{matrix}
1+3a & 3-3a \\
1-a & 3+a
\end{matrix}\right] \qquad \text{and}\qquad A_2 = \frac1{2}\left[ \begin{matrix}
1+a & 1-a \\
1-a & 1+a
\end{matrix}\right] \end{gather*}
with $0<a<1$. With $P=\left[\begin{smallmatrix}
1 & 3 \\
1 & -1
\end{smallmatrix}\right]$ one obtains
\begin{gather*} T_1 = \left[\begin{matrix}
1 & 0 \\
0 & a
\end{matrix}\right] \qquad \text{and}\qquad T_2 = \left[ \begin{matrix}
1 & 1-a \\
0 & a
\end{matrix}\right]. \end{gather*}
Thus $g_1(x)=ax$, $g_2(x)=ax + (1-a)$ and $\Gamma$ is the Cantor set $\Gamma_a$ when $0<a<\frac1{2}$ and the interval $[0,1]$ when $\frac1{2}\leqslant a<1$. The limit $L$ of the sequence $\big(H^n(K_0)\big)_n$ depends on the starting set $K_0$. One has
\begin{gather*}
L=\cl\left(\bigcup_{(x_0,y_0)\in K_0} \big\{(x,x)\colon x\in h_{(x_0,y_0)}(\Gamma)\big\}\right),
\end{gather*}
where $h_{(x_0,y_0)}\colon {\mathbb R}\to {\mathbb R}$ is the affine map defined by $h_{(x_0,y_0)}(x)=\frac{x_0-y_0}{4} x+\frac{3y_0+x_0}{4}$. For example, when $a=\frac1{3}$, $L=\big({-}\frac1{4}\Gamma_{\frac1{3}} +\frac{7}{4},-\frac1{4} \Gamma_{\frac1{3}} +\frac{7}{4}\big)$ if $K_0=\{(1,2)\}$ (see Fig.~\ref{fig:limstoc}(a)) whereas $L=\big({-}\frac1{4} \Gamma_{\frac1{3}} +\frac{7}{4},-\frac1{4} \Gamma_{\frac1{3}} +\frac{7}{4}\big) \bigcup \big(\frac1{2} \Gamma_{\frac1{3}} +\frac1{2},\frac1{2} \Gamma_{\frac1{3}} +\frac1{2}\big)$ if $K_0=\{(1,2),(2,0)\}$ (see Fig.~\ref{fig:limstoc}(b)). More $K_0$ contains points then more complicated is the limit set $L$, with unions of overlapping Cantor sets.
\end{Example}

\begin{figure}[!h]\centering
\begin{minipage}{75mm}\centering
\includegraphics[scale=0.4]{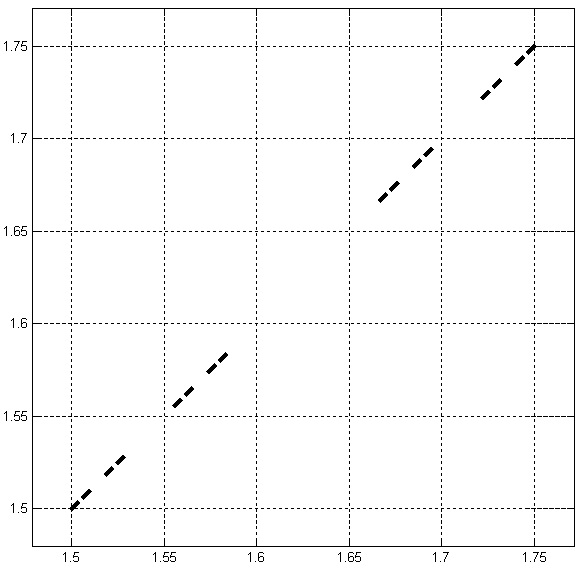}\\
{\footnotesize (a) one Cantor set}
\end{minipage}
\qquad
\begin{minipage}{75mm}\centering
\includegraphics[scale=0.4]{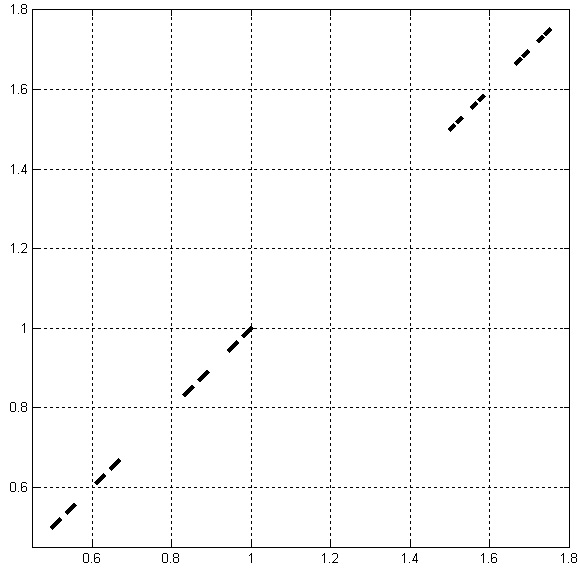}\\
{\footnotesize (b) two Cantor sets}
\end{minipage}
\caption{The limit set $L$ of $\big(H^n(K_0)\big)_n$ where $H$ is the Hutchinson operator associated with the IFS $\{f_1,f_2\}$. Maps $f_1$, $f_2$ are given in Example~\ref{exa:limstoc} with parameter $a=\frac1{3}$. Figure (a): the starting set is $K_0=\{(1,2)\}$ and $L$ is a Cantor set. Figure (b): the starting set is $K_0=\{(1,2),(2,1)\}$ and~$L$ is a union of two disjoint Cantor sets (one in bottom on the left and a~second in top on the right).}\label{fig:limstoc}
\end{figure}

Several necessary and sufficient conditions for a finite set of matrices to be a LCP set have been given (see \cite{beyn,bru,els1} and \cite{hart} for a survey). Not surprisingly, they require to evaluate the joint spectral radius of~${\cal M}$ or determine the generalized eigenspaces of the $A_i$'s.

\subsection{Identity-block matrices}\label{subsec:lincvgproj}

Hypothesis \eqref{eq:cvglcp} implies that the address function ${\cal A}\colon (i) \mapsto A_{(i)}$ is continuous. Unfortunately, very simple LCP sets may not fulfill this condition preventing from applying Lemma~\ref{lem:cvglcp}. This situation happens for example adding the matrix~$\id$ to a LCP set with a continuous function~${\cal A}$ (see~\cite{daub1}). However this simple case can be solved directly.

\begin{Lemma}\label{lem:limidlin} Assume that $A_p=\id$ and $\|A_i\|\leqslant 1$ for all $i\in\{1,\ldots,p-1\}$. Then, for all $K_0\in{\cal K}$, the sequence $\big(H^n(K_0)\big)_n$ converges to the set
\begin{gather*}\label{eq:limidlin}
L = \cl\left( \bigcup_{n\geqslant0} H^n(K_0)\right).
\end{gather*}
Moreover, if $\|A_i\|<1$ for all $i\in\{1,\ldots,p-1\}$, then denoting by $L'$ the attractor of the contractive $\mathrm{IFS}$ $\{f_1,\ldots,f_{p-1}\}$, we have $L=L'$ as soon as $K_0\subset L'$.
\end{Lemma}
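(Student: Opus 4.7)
The main observation is that the assumption $A_p = \id$ forces $f_p(K) = K$ for every $K \in \mathcal{K}$, hence $H(K) = K \cup \bigcup_{i=1}^{p-1} f_i(K) \supseteq K$. Thus the sequence $\bigl(H^n(K_0)\bigr)_n$ is increasing in the inclusion order. Moreover, since $\|A_i\| \leqslant 1$ for every $i$, each $f_i$ maps any closed ball $B(0,R)$ into itself; choosing $R$ so that $K_0 \subset B(0,R)$ gives $H^n(K_0) \subset B(0,R)$ for all $n \geqslant 0$, so the sequence is uniformly bounded.

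Now I would invoke the following standard fact for the Hausdorff metric: an increasing sequence $(K_n)_n$ in $\mathcal{K}$ whose union is bounded converges to $L = \cl\bigl(\bigcup_n K_n\bigr)$. Indeed, monotonicity forces $K_n \subset L$ for all $n$, while any $x \in L$ is either in some $K_n$ or is a limit of points $x_{n_k} \in K_{n_k}$, so the Hausdorff distance $d_H(K_n, L)$ tends to $0$. This gives the first claim of the lemma.

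For the second assertion, assume $\|A_i\|<1$ for $i\in\{1,\ldots,p-1\}$ and let $H'$ be the contractive Hutchinson operator associated with $\{f_1,\ldots,f_{p-1}\}$, with attractor $L'$. I would prove the two inclusions separately. For $L \subset L'$: since $H'(L') = L'$ and $f_p(L') = L'$, the set $L'$ is $H$-invariant, so $K_0 \subset L'$ and an immediate induction give $H^n(K_0) \subset L'$; closing up yields $L \subset L'$. For $L' \subset L$: a straightforward induction using monotonicity of $H'$ shows
\begin{gather*}
H'^n(K_0) \subset H^n(K_0) \subset L \qquad \text{for all } n \geqslant 0.
\end{gather*}
Since $H'^n(K_0) \to L'$ in the Hausdorff metric and $L$ is closed, taking the limit preserves the inclusion and gives $L' \subset L$.

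I don't expect any real obstacle here; the only subtle point is recalling that increasing bounded sequences of compacts converge in the Hausdorff metric to the closure of their union, and that Hausdorff limits respect inclusion into a fixed closed set. Both are routine, so the proof reduces essentially to the two observations $H(K) \supseteq K$ (from $A_p = \id$) and boundedness (from $\|A_i\| \leqslant 1$).
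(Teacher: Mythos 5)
Your proposal is correct and follows essentially the same route as the paper: increasing from $A_p=\id$, bounded via invariance of a large ball $B(0,R)$, monotone convergence to the closure of the union, and $L\subset L'$ by the $H$-invariance of $L'$. The only (cosmetic) difference is in the reverse inclusion, where you iterate $H'$ from $K_0$ inside $L$ while the paper iterates $H'$ from $L$ itself using $L=H(L)\supset H'(L)$; both rest on $(H')^n(\cdot)\to L'$ and closedness of $L$.
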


\begin{proof} Since $f_p=\id$, the sequence $\big(H^n(K_0)\big)_n$ is clearly increasing. So it is enough to prove that it is bounded to get the convergence to $L$. Let $R>0$ be large enough to ensure $K_0\subset B(0,R)$. Then, $f_i(B(0,R))\subset B(0,R)$ for all $i\in\{1,\ldots,p\}$. Therefore $H^n(K_0)\subset B(0,R)$ for all $n\geqslant0$. Now, observe that, for all $K\in{\cal K}$, $H(K)=H'(K)\cup K$ where $H'$ is the Hutchinson operator associated with the $\mathrm{IFS}$ $\{f_1,\ldots,f_{p-1}\}$. Assume that $K_0\subset L'$. Then, $H(K_0)\subset H(L')=H'(L')\cup L' = L'\cup L' = L'$. By induction it follows that $H^n(K_0)\subset L'$ for all $n\geqslant 0$. Taking the limit we get $L\subset L'$. Next, we have $L=H(L)=H'(L) \cup L \supset H'(L)$. By induction it follows that $L\supset (H')^n(L)$ for all $n\geqslant 0$. Taking the limit we get $L\supset L'$, and finally $L=L'$.
\end{proof}

These latter sequences are related to the so-called inhomogeneous IFS's (see, e.g.,~\cite{fra}). Indeed, one has $H^n(K_0)=H_0^n(K_0)$ where $H_0$ is the Hutchinson operator associated with the contractive IFS $\{f_0,f_1,\ldots,f_{p-1}\}$ where $f_0\colon K\in{\cal K}\mapsto K_0$.

We can generalize the previous result to matrices $A_i$ which contain an identity-block, that is when the restriction of $f_i$ to a certain subspace of~${\mathbb R}^D$ is the identity function. We will state two results dealing with two special cases of such families.

\begin{Theorem}\label{theo:linproj} Assume that there exists two subspaces $V,W\subset {\mathbb R}^D$ satisfying $V\oplus W={\mathbb R}^D$ and, for all $i\in \{1,\ldots,p\}$,
\begin{enumerate}\itemsep=0pt
\item[$(i)$] $f_i(V)\subset V$ and the linear function $f_{i,V}\colon V\to V$ induced by $f_i$ is a contraction or the identity function $\id$,
\item[$(ii)$] $f_i(W)\subset W$ and the linear function $f_{i,W}\colon W\to W$ induced by $f_i$ is a contraction.
\end{enumerate}
Then, $\big(H^n(K_0)\big)_n$ converges for all $K_0\in {\cal K}$ to a set $L$. Precisely,
\begin{enumerate}\itemsep=0pt
\item[--] either there is at least one $i\in\{1,\ldots,p\}$ such that $f_{i,V}=\id$, then
\begin{gather*}
L=\cl\left(\bigcup_{n\geqslant 0} p_{V,W} \big(H^n(K_0)\big)\right),
\end{gather*}
where $p_{V,W}$ is the projection onto $V$ along $W$,
\item[--] or else $L=\{0\}$.
\end{enumerate}
Moreover, $L$ is the limit of the sequence $\big(H_V^n(p_{V,W}(K_0))\big)_n$ where $H_V$ is associated with the $\mathrm{IFS}$ $\{f_{1,V},\ldots,f_{p,V}\}$
\end{Theorem}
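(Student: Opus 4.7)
The strategy is to decompose everything along $V \oplus W = {\mathbb R}^D$ via the projections $p_V := p_{V,W}$ and $p_W := p_{W,V}$. The invariance assumptions yield the intertwining relations $p_V \circ f_i = f_{i,V} \circ p_V$ and $p_W \circ f_i = f_{i,W} \circ p_W$ on ${\mathbb R}^D$, and since projections commute with unions, a straightforward induction gives
\begin{gather*}
p_V\big(H^n(K_0)\big) = H_V^n(p_V(K_0)) \qquad \text{and}\qquad p_W\big(H^n(K_0)\big) = H_W^n(p_W(K_0))
\end{gather*}
for all $n \geqslant 0$, where $H_W$ denotes the Hutchinson operator of the IFS $\{f_{1,W},\ldots,f_{p,W}\}$. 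The plan is then to control the two projected orbits separately and to recombine them into a statement about $\big(H^n(K_0)\big)_n$ itself.

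On $W$, every $f_{i,W}$ is a contraction, so $H_W$ is contractive on ${\cal K}_W$ with unique attractor $\{0\}$ (since $0$ is the only fixed point of any linear contraction); thus $H_W^n(p_W(K_0)) \to \{0\}$ in $({\cal K}_W,\dha)$. On $V$ I distinguish two cases. If some $f_{i,V}$ equals $\id$, reindexing so that it is $f_{p,V}$ gives $\|f_{i,V}\|\leqslant 1$ for every $i$, and Lemma~\ref{lem:limidlin} applies directly to $H_V$ on $V$, yielding
\begin{gather*}
H_V^n(p_V(K_0)) \to L_V := \cl\bigg(\bigcup_{k\geqslant 0} H_V^k(p_V(K_0))\bigg) = \cl\bigg(\bigcup_{k\geqslant 0} p_V\big(H^k(K_0)\big)\bigg).
\end{gather*}
Otherwise every $f_{i,V}$ is a contraction, so $H_V$ is itself contractive with attractor $\{0\}$, and one sets $L_V := \{0\}$. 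In both cases the ``moreover'' clause of the theorem is nothing but the identification of the claimed $L$ with the limit of $\big(H_V^n(p_V(K_0))\big)_n$.

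The concluding step is to show $\dha\big(H^n(K_0),L\big)\to 0$ for $L := L_V\subset V$. Given $\eps > 0$, pick $N$ large enough that, for all $n \geqslant N$, both $\dha\big(p_V(H^n(K_0)),L\big) < \eps/2$ and $\max\{\|y\| \colon y\in p_W(H^n(K_0))\} < \eps/2$ hold. Each $x \in H^n(K_0)$ decomposes as $x = p_V(x) + p_W(x)$ with $\|x - p_V(x)\| = \|p_W(x)\| < \eps/2$ and $p_V(x)$ within $\eps/2$ of some $l \in L$; conversely, every $l \in L$ sits within $\eps/2$ of some $v = p_V(x) \in p_V(H^n(K_0))$ with $x \in H^n(K_0)$, and then $\|x - l\| \leqslant \|x - v\| + \|v - l\| < \eps$. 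The two one-sided inclusions $H^n(K_0) \subset L(\eps)$ and $L \subset H^n(K_0)(\eps)$ follow, giving $\dha\big(H^n(K_0),L\big) < \eps$.

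The main obstacle I expect is this recombination step: the two projected convergences do not automatically imply convergence of the original orbit, and one must carefully lift points of $L\subset V$ back to $H^n(K_0)$ through $p_V$ while keeping the $W$-coordinate uniformly small. Everything else reduces to a direct appeal to Lemma~\ref{lem:limidlin} (or to the classical contractive IFS theory) applied to the operators $H_V$ and $H_W$.
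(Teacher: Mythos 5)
Your proof is correct and follows essentially the same route as the paper: you identify $p_{V,W}\big(H^n(K_0)\big)=H_V^n(p_{V,W}(K_0))$, handle the $V$-component via Lemma~\ref{lem:limidlin} (or contractivity of $H_V$), and absorb the $W$-component by noting it lies in $H_W^n(p_{W,V}(K_0))\to\{0\}$, which is exactly the paper's estimate $\dha\big(H^n(K_0),H_V^n(V_0)\big)\leqslant\widetilde{\lambda}_H^n\sup_{w_0}\|w_0\|$ in a slightly different packaging. The recombination step you flag as the main obstacle is carried out correctly.
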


\begin{proof} Let us write $V_0= p_{V,W}(K_0)$, $W_0=p_{W,V}(K_0)$ where $p_{W,V}$ is the projection onto $W$ along $V$, and set $\widetilde{\lambda}_H=\max\limits_{1\leqslant i \leqslant p} \|f_{i,W}\|<1$. Finally let us write $K'_n = H_V^n(V_0)$.

(a) Let $x=v+w\in V\oplus W$. Since $f_i(x)=f_{i,V}(v)+f_{i,W}(w)$, we can write
\begin{gather*}
\forall\,n\geqslant0,\qquad H^n(K_0) = \bigcup_{v\in K'_n} v + L_n(v),
\end{gather*}
where $L_n(v) \subset W$ satisfies $p_{V,W}(L_n(v))=v$. Thus, we obtain
\begin{gather*}
\dha\big(H^n(K_0),K'_n\big) = \dha\left(\bigcup_{v\in K'_n} v + L_n(v),\bigcup_{v\in K'_n} \{v\} \right)
\leqslant \sup_{v\in K'_n} \dha(v + L_n(v), \{v\}).
\end{gather*}
Let $v\in K'_n$. We have
\begin{gather*}
\dha(v + L_n(v), \{v\}) = \sup_{z\in L_n(v)} \|z-v\| = \sup_{z\in L_n(v)} \|z- p_{V,W}(z)\|.
\end{gather*}
Let $z\in L_n(v)$. We can write
\begin{gather*}
z=(f_{i_n}\circ\cdots\circ f_{i_1})(z_0)=(f_{i_n,V}\circ\cdots\circ f_{i_1,V})(v_0) + (f_{i_n,W}\circ\cdots\circ f_{i_1,W})(w_0)
\end{gather*}
with $z_0=v_0+w_0 \in V_0\oplus W_0$. Thus,
\begin{gather*}
\|z- p_{V,W}(z)\|=\|(f_{i_n,W}\circ\cdots\circ f_{i_1,W})(w_0)\|\leqslant \|f_{i_n,W}\circ\cdots\circ f_{i_1,W}\|\|w_0\|\leqslant \widetilde{\lambda}_H^n \sup_{w_0\in W_0}\|w_0\|.
\end{gather*}
It follows that $\dha\big(H^n(K_0),K'_n\big) \leqslant \widetilde{\lambda}_H^n \sup\limits_{w_0\in W_0}\|w_0\| \to 0$ when $n\to\infty$.

(b) Therefore, it is now enough to prove the convergence of $(K'_n)_n$.

Assume first that $f_{i,V}\neq \id$ for all $i\in\{1,\ldots,p\}$. Then $H_V$ is contractive and we get $K'_n \to \{0\}$. Thus $K_n \to L=\{0\}$.

Assume now that there is at least one $j\in\{1,\ldots,p\}$ such that $f_{j,V}=\id$. Since $\|f_{i,V}\|\leqslant 1$, it follows from Lemma~\ref{lem:limidlin} that $K'_n\to L = \cl\big(\bigcup_{n\geqslant 0} K'_n\big)$. Let us prove by induction that $K'_n = p_{V,W}\big(H^n(K_0)\big)$. We have $K'_0=H_V^0(V_0)=V_0= p_{V,W}(K_0)$ and
\begin{align*}
K'_{n+1} & = H_V(K'_n)= \bigcup_{i=1}^p f_{i,V}(K'_n)= \bigcup_{i=1}^p f_{i,V}\big(p_{V,W}\big(H^n(K_0)\big)\big) \\
& = \bigcup_{i=1}^p p_{V,W}\big(f_i\big(H^n(K_0)\big)\big) = p_{V,W}\left(\bigcup_{i=1}^p f_i\big(H^n(K_0)\big)\right) = p_{V,W}\big(H^{n+1}(K_0)\big).
\end{align*}
Hence the result. Therefore $K_n \to L= \cl\big(\bigcup_{n\geqslant 0} p_{V,W}\big(H^n(K_0)\big)\big)$ that ends the proof.
\end{proof}

\begin{Theorem}\label{theo:linsup}Assume that there exists two subspaces $V,W\subset {\mathbb R}^D$ satisfying $V\oplus W={\mathbb R}^D$ and, for all $i\in \{1,\ldots,p\}$,
\begin{enumerate}\itemsep=0pt
\item[$(i)$] $(f_i-\id)(V) \subset W$,
\item[$(ii)$] $f_i(W)\subset W$ and the linear function $f_{i,W}\colon W\to W$ induced by $f_i$ is a contraction.
\end{enumerate}
Then, $\big(H^n(K_0)\big)_n$ converges for all $K_0\in {\cal K}$ to the set
\begin{gather*}
L=\cl\left(\bigcup_{v_0 \in p_{V,W}(K_0)} v_0 + \widetilde{L}(v_0)\right),
\end{gather*}
where $p_{V,W}$ is the projection onto $V$ along $W$ and $\widetilde{L}(v_0)$ is the attractor of the $\mathrm{IFS}$ $\big\{\widetilde{f}_1,\ldots,\widetilde{f}_p\big\}$ where $\widetilde{f}_i\colon W\to W$ is defined by $\widetilde{f}_i(w)= f_{i,W}(w) + (f_i-\id)(v_0)$.

In particular, $p_{V,W}(L)=p_{V,W}(K_0)$.
\end{Theorem}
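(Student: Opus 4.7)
The idea is to use hypothesis $(i)$ to decompose $H^n(K_0)$ into fibers over $V$ and to apply the classical contractive theory inside each fiber. For $x=v+w\in V\oplus W$ one has
\[
f_i(v+w)=v+(f_i-\id)(v)+f_{i,W}(w)=v+\widetilde{f}_i(w),
\]
so the $V$-component is preserved, which gives $p_{V,W}(H^n(K_0))=p_{V,W}(K_0)$ for all $n$. For each $v_0\in p_{V,W}(K_0)$, the fiber $K_0^{v_0}:=\{w\in W\colon v_0+w\in K_0\}$ is a non-empty compact subset of $W$. Denoting by $\widetilde{H}_{v_0}$ the Hutchinson operator on $\mathcal{K}_W$ associated with the $v_0$-dependent $\widetilde{f}_i$'s, a direct induction on $n$ yields the fiber decomposition
\[
H^n(K_0)=\bigcup_{v_0\in p_{V,W}(K_0)}\big(v_0+\widetilde{H}_{v_0}^n(K_0^{v_0})\big).
\]

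Each $\widetilde{H}_{v_0}$ is contractive on $(\mathcal{K}_W,\dha)$ with ratio $\widetilde{\lambda}_H:=\max_i\|f_{i,W}\|<1$, independent of $v_0$, since a translation does not change the Lipschitz constant. By the classical theory $\widetilde{H}_{v_0}$ therefore admits the unique attractor $\widetilde{L}(v_0)$, and
\[
\dha\big(\widetilde{H}_{v_0}^n(K_0^{v_0}),\widetilde{L}(v_0)\big)\leqslant\widetilde{\lambda}_H^n\,\dha\big(K_0^{v_0},\widetilde{L}(v_0)\big).
\]
To turn this pointwise rate into one uniform in $v_0$, I would bound $\dha(K_0^{v_0},\widetilde{L}(v_0))$ independently of $v_0$. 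The fibers satisfy $K_0^{v_0}\subset p_{W,V}(K_0)$, which is bounded. Each fixed point $z_i^{v_0}=(\id_W-f_{i,W})^{-1}(f_i-\id)(v_0)$ depends linearly, hence continuously, on $v_0$; so $\{z_i^{v_0}\colon v_0\in p_{V,W}(K_0),\,1\leqslant i\leqslant p\}$ is bounded, and the standard estimate for attractors of contractive IFS's then encloses all the $\widetilde{L}(v_0)$ in a single ball of $W$. Consequently, the convergence $\widetilde{H}_{v_0}^n(K_0^{v_0})\to\widetilde{L}(v_0)$ is uniform in $v_0$.

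Setting $L'=\bigcup_{v_0\in p_{V,W}(K_0)}\big(v_0+\widetilde{L}(v_0)\big)$, the uniform fiber convergence plus the fiber decomposition translate directly into $\dha(H^n(K_0),L')\to 0$ by a short $\eps$-argument: any point in one set is approximated by a point in the other having the same $V$-component. Since $H^n(K_0)\in\mathcal{K}$, the actual Hausdorff limit must be $L=\cl(L')$, which is itself compact by the uniform bounds obtained above. The final assertion $p_{V,W}(L)=p_{V,W}(K_0)$ follows from passing to the limit in $p_{V,W}(H^n(K_0))=p_{V,W}(K_0)$ using the continuity of $p_{V,W}$.

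The main obstacle is the uniformity in $v_0$: the whole scheme collapses if the constant $\dha(K_0^{v_0},\widetilde{L}(v_0))$ multiplying $\widetilde{\lambda}_H^n$ blows up with $v_0$. Controlling it requires both the compactness of the parameter set $p_{V,W}(K_0)$ and the continuous (in fact linear) dependence of the fixed points $z_i^{v_0}$ on $v_0$. Once this uniformity is secured, the rest of the argument is mechanical.
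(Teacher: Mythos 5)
Your proposal is correct and follows essentially the same route as the paper: split $\mathbb{R}^D=V\oplus W$, observe $f_i(v+w)=v+\widetilde{f}_i(w)$ so the $V$-component is frozen, run the contractive IFS $\{\widetilde{f}_i\}$ in each $W$-fiber with the uniform ratio $\widetilde{\lambda}_H$, and pass to the limit after checking the error is uniform in $v_0$. The only (immaterial) difference is how that uniformity is secured — you bound the attractors $\widetilde{L}(v_0)$ via their fixed points $(\id_W-f_{i,W})^{-1}(f_i-\id)(v_0)$, while the paper exhibits a single invariant ball $B(0,R)\cap W$ with $R=\frac{1}{1-\widetilde{\lambda}_H}\max_i\rho((f_i-\id)(V_0))$ containing all of them.
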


\begin{proof} Let us write $V_0= p_{V,W}(K_0)$, $W_0=p_{W,V}(K_0)$ where $p_{W,V}$ is the projection onto $W$ along $V$, and set $\widetilde{\lambda}_H=\max\limits_{1\leqslant i \leqslant p} \|f_{i,W}\|=\max\limits_{1\leqslant i \leqslant p} \|\widetilde{f}_i\|<1$.

(a) First let $z_0\in K_0$ with $z_0 = v_0 + w_0 \in V_0+W_0 \subset V\oplus W$. We have
\begin{gather*}
\forall\,i\in\{1,\ldots,p\},\qquad f_i(z_0) = v_0 + \big((f_i(v_0)-v_0) + f_i(w_0)\big) = v_0 + \widetilde{f}_i(w_0) \in V_0+W.
\end{gather*}
We then prove by induction that
\begin{gather*}
\forall\,n\geqslant0,\qquad H^n(\{z_0\}) = v_0 + \widetilde{H}^n(\{w_0\}),
\end{gather*}
where $\widetilde{H}$ is the Hutchinson operator associated with the contractive IFS $\big\{\widetilde{f}_1,\ldots,\widetilde{f}_p\big\}$. Therefore the sequence $\big(\widetilde{H}^n(w_0)\big)_n$ converges to a set $\widetilde{L}(v_0)$ which only depends on $v_0$. It follows that $\big(H^n(\{z_0\})\big)_n$ converges to $v_0 + \widetilde{L}(v_0)$ with the estimate
\begin{gather*}
\dha\big(v_0 + \widetilde{L}(v_0),v_0+\widetilde{H}^n(w_0)\big) = \dha\big(\widetilde{L}(v_0),\widetilde{H}^n(w_0)\big) \leqslant \widetilde{\lambda}_H^n\dha\big(\widetilde{L}(v_0),w_0\big).
\end{gather*}

(b) Let
\begin{gather*}
R=\frac1{1-\widetilde{\lambda}_H} \max_{1\leqslant i\leqslant p} \big\{ \rho((f_i-\id)(V_0)) \big\}.
\end{gather*}
We have $\widetilde{f}_i(B(0,R)\cap W)\subset B(0,R)\cap W$ for all $i\in\{1,\ldots,p\}$,
so that $\widetilde{H}(B(0,R)\cap W)\subset B(0,R)\cap W$. It follows that $\widetilde{L}(v_0) \subset B(0,R)\cap W$ for all $v_0\in V_0$. Thus
\begin{gather*}
\forall\,v_0\in V_0,\qquad \rho\big(v_0 + \widetilde{L}(v_0)\big) \leqslant \|v_0\| + R \leqslant \rho(V_0)+R.
\end{gather*}
We then have proved that the set $\,\bigcup_{v_0 \in V_0} v_0 + \widetilde{L}(v_0)$ is bounded, i.e., $L\in{\cal K}$.

Furthermore, since $\widetilde{L}(v_0)$ does not depend on $w_0$, we can write
\begin{gather*}
\bigcup_{v_0 \in V_0} v_0 + \widetilde{L}(v_0) = \bigcup_{v_0+w_0 \in K_0} v_0 + \widetilde{L}(v_0).
\end{gather*}

(c) Finally, writing $H^n(K_0)=\bigcup_{v_0+w_0 \in K_0} v_0 + \widetilde{H}^n(w_0) $ and using (a) and (b), we obtain
\begin{align*}
\dha\big(L,H^n(K_0)\big) & = \dha\left(\bigcup_{v_0+w_0 \in K_0} v_0 + \widetilde{L}(v_0),\bigcup_{v_0+w_0 \in K_0} v_0 + \widetilde{H}^n(w_0)\right) \\
 & \leqslant \sup_{v_0+w_0\in K_0} \dha\big(v_0 + \widetilde{L}(v_0),v_0+\widetilde{H}^n(w_0)\big) \\
 & \leqslant \widetilde{\lambda}_H^n\sup_{v_0+w_0\in K_0}\dha\big(\widetilde{L}(v_0),w_0\big).
\end{align*}

Since this latter supremum is finite, we obtain the result by letting $n$ goes to $\infty$.
\end{proof}

Notice that hypotheses of Theorem \ref{theo:linsup} mean that, for all $i\in\{1,\ldots,p\}$, the maps $f_i$'s have a block matrix with respect to the sum $V\oplus W$ of the form
\begin{gather*} \left[\begin{matrix}
\id & 0 \\
M_i & \widetilde{A}_i
\end{matrix}\right] \end{gather*}
with $\widetilde{A}_i$ contractive and some matrix $M_i$. We deduce that $\sigma_{{\cal M}}=1$.

In particular, when $M_i=0$ for all $i\in\{1,\ldots,p\}$, we get $f_{i,W}=\widetilde{f}_i$ hence $\widetilde{L}(v_0)=\{0\}$ for all $v_0\in V_0$ and the limit set is simply $L=p_{V,W}(K_0)$. We recover a special case of Theorem \ref{theo:linproj}, namely when $f_{i,V}=\id$ and $H_V=\id$.

\begin{Example}\label{exa:linsup}
Let us consider the IFS $\{f_1,f_2\}$ where the $f_i\colon {\mathbb R}^3\to {\mathbb R}^3$ are the linear maps given by their canonical matrices
\begin{gather*} A_1 = \left[ \begin{matrix}
1 & 0 & 0 \\
0 & 1 & 0 \\
a & b & c
\end{matrix}\right]
\qquad \text{and}\qquad A_2 = \left[ \begin{matrix}
1 & 0 & 0 \\
0 & 1 & 0 \\
0 & 0 & c
\end{matrix}\right] \end{gather*}
with $a,b\in{\mathbb R}$ and $0<c<1$. Then, for all $K_0\in{\cal K}$, the sequence $\big(H^n(K_0)\big)_n$ converges to the set
\begin{gather*}
L=\cl\left(\bigcup_{(x_0,y_0,z_0)\in K_0} \left\{\left(x_0,y_0,\frac{ax_0+by_0}{1-c} L_c\right)\right\}\right)
\end{gather*}
where $L_c$ is the Cantor set $\Gamma_c$ if $0<c<\frac1{2}$ and the interval $[0,1]$ if $\frac1{2}\leqslant c<1$.

As an example, $L$ is shown in Fig.~\ref{fig:limlinsup} when $K_0$ is the unit circle $\{(\cos t,\sin t,0)\colon t\in[0,2\pi]\}$ and parameters $(a,b,c)=\big(1,1,\frac1{4}\big)$.
\end{Example}

\begin{figure}[!h]\centering
\includegraphics[scale=0.5]{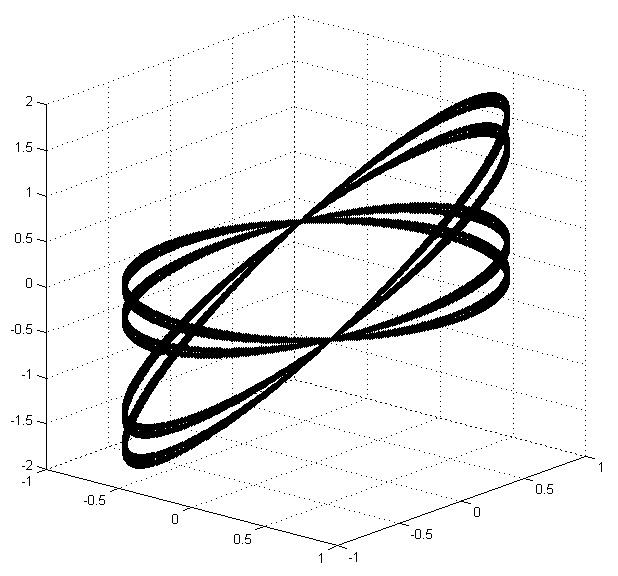}
\caption{The limit set $L$ of $\big(H^n(K_0)\big)_n$ where $H$ is the Hutchinson operator associated with the IFS $\{f_1,f_2\}$. Maps $f_1$, $f_2$ are given in Example~\ref{exa:linsup} with parameters $(a,b,c)=\big(1,1,\frac1{4}\big)$. The starting set is the circle $K_0=\{(\cos t,\sin t,0) \colon t\in[0,2\pi]$\}.}\label{fig:limlinsup}
\end{figure}

\begin{proof}
We can apply Theorem \ref{theo:linsup} with $V=\Span\{(1,0,0),(0,1,0)\}$ and $W=\Span\{(0,0,1)\}$. Thus, $\big(H^n(K_0)\big)_n$ converges to the set
\begin{gather*} L=\cl\left(\bigcup_{(x_0,y_0,z_0)\in K_0} (x_0,y_0,0)+L(x_0,y_0)\right), \end{gather*}
where $L(x_0,y_0)$ is the attractor of the IFS $\big\{\widetilde{f}_1,\widetilde{f}_2\big\}$ with, for all $w=(0,0,z)\in W$, \begin{gather*} \widetilde{f}_1(w)=\widetilde{f}_1(0,0,z)= (0,0,cz+ax_0+by_0 )\qquad \text{and}\qquad \widetilde{f}_2(w)=\widetilde{f}_2(0,0,z)= (0,0,cz ). \end{gather*}
By uniqueness, we check that this attractor is the one announced.

Assume that $(a,b,c)=\big(1,1,\frac1{4}\big)$ and $K_0=\{(\cos t,\sin t)\colon t\in[0,2\pi]\}\times\{0\}$. We have
\begin{gather*}
L= \cl\left(\bigcup_{t\in[0,2\pi]} \big(\cos t,\sin t,\tfrac{4}{3}(\cos t+\sin t)\Gamma_{\frac1{4}}\big)\right).
\end{gather*}
Then $L$ is the closure of a union of circles drawn on the cylinder $\{(\cos t,\sin t)\colon t\in[0,2\pi]\}\times \big\{\frac{4\sqrt{2}}{3}t\colon t\in[-1,1]\big\}$. Each intersection with a generatrice is homothetic with the Cantor set $\Gamma_{\frac1{4}}$ (except for the two special values $t=\frac{3\pi}{4}$ and $t=\frac{7\pi}{4}$).
\end{proof}

\subsection{Orbit of the unit ball}\label{subsec:linunitball}

To avoid the various behaviors due to the different invariant subspaces of the $A_i$'s, we propose to take into account all the directions of ${\mathbb R}^D$ by focusing on the $H$-orbit of the unit ball $B(0,1)$.

\begin{Proposition}\label{prop:linboule}
Assume that
\begin{enumerate}\itemsep=0pt
\item[$(i)$] For all $i\in\{1,\ldots,p\}$, $\,\|A_i\|\leqslant 1$,
\item[$(ii)$] There exists $N\geqslant1$ indices $i_1,\ldots,i_N\in\{1,\ldots,p\}$ such that the matrix $A_{i_N}\cdots A_{i_1}$ has eigenvalue~$1$.
\end{enumerate}
Then, the sequence $(H^n(B(0,1)))_n$ is decreasing and converges to the set
\begin{gather}\label{eq:limlinboule}
L = \bigcap_{n\geqslant0} H^n(B(0,1))
\end{gather}
with $\rho(L)=1$. Moreover, $H_{\rho}^n(B(0,1))=H^n(B(0,1))$ and $d_n=1$ for all $n\geqslant 0$.
\end{Proposition}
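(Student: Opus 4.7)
My plan is to establish the four claims in order: monotonicity of the orbit, convergence to its intersection $L$, the equality $\rho(L)=1$, and the identification of $H_\rho^n(B(0,1))$ with $H^n(B(0,1))$ together with $d_n=1$. First I would observe that hypothesis~$(i)$ gives $f_i(B(0,1))\subset B(0,1)$ for every $i$, hence $H(B(0,1))\subset B(0,1)$. Since $H$ is monotone for inclusion, applying $H$ repeatedly yields $H^{n+1}(B(0,1))\subset H^n(B(0,1))$, so the sequence is a decreasing chain of non-empty compact sets; the standard fact that such a chain converges in Hausdorff distance to its intersection then gives convergence to $L$ as defined in~\eqref{eq:limlinboule}.

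To prove $\rho(L)=1$ I would exploit hypothesis~$(ii)$. Let $v$ be a unit eigenvector of $A_{i_N}\cdots A_{i_1}$ associated with the eigenvalue $1$. Since each $\|A_i\|\leqslant 1$ and $\|A_{i_N}\cdots A_{i_1}v\|=1$, every partial product must satisfy $\|A_{i_k}\cdots A_{i_1}v\|=1$ for $1\leqslant k\leqslant N$; otherwise the norm would drop strictly below $1$ at some step and could not be recovered. In particular $v=(f_{i_N}\circ\cdots\circ f_{i_1})(v)\in H^N(B(0,1))$. Iterating the eigen-equation, $v=(A_{i_N}\cdots A_{i_1})^k v\in H^{kN}(B(0,1))$ for every $k\geqslant 0$, and monotonicity then places $v$ in $H^n(B(0,1))$ for every $n$. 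Hence $v\in L$, and since $L\subset B(0,1)$ one concludes $\rho(L)=1$.

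Finally I would close the proposition by induction on $n\geqslant 0$ with the simultaneous statement $H_\rho^n(B(0,1))=H^n(B(0,1))$ and $d_n=1$. The base case $n=0$ is immediate. For the inductive step, assuming the identity holds at step $n$, one has $d_n=\rho(H(H^n(B(0,1))))=\rho(H^{n+1}(B(0,1)))$, which is at most $1$ because $H^{n+1}(B(0,1))\subset B(0,1)$ and at least $1$ because the argument of the previous paragraph furnishes a unit vector inside $H^{n+1}(B(0,1))$; so $d_n=1$, and then $H_\rho^{n+1}(B(0,1))=\frac{1}{d_n}H(H_\rho^n(B(0,1)))=H^{n+1}(B(0,1))$.

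The main obstacle is the $\rho(L)=1$ step: one must exhibit a unit vector that survives \emph{every} iterate of $H$. Hypothesis~$(i)$ is what forces each partial product in the eigenvector argument to be norm-preserving on $v$, and hypothesis~$(ii)$ is what allows $v$ to be propagated indefinitely by pre-composing with additional copies of $A_{i_N}\cdots A_{i_1}$; together they place $v$ in $H^n(B(0,1))$ for all $n$ and feed both the $\rho(L)=1$ conclusion and the induction that pins down $d_n$.
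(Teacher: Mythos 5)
Your proof is correct and follows essentially the same route as the paper: hypothesis $(i)$ gives the decreasing chain converging to its intersection, a unit eigenvector $v$ of $A_{i_N}\cdots A_{i_1}$ is shown to lie in every $H^{kN}(B(0,1))$ and hence in $L$, which pins $\rho(H^n(B(0,1)))=\rho(L)=1$ and forces $d_n=1$. The only difference is cosmetic: your remark that every partial product preserves $\|v\|$ is not actually needed (membership $v\in H^N(B(0,1))$ already follows from $v\in B(0,1)$ and $v=(f_{i_N}\circ\cdots\circ f_{i_1})(v)$), and your explicit induction replaces the paper's one-line chain of inequalities.
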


\begin{proof}Hypothesis (i) implies that $H(B(0,1))\subset B(0,1)$, then $\big(H^n(B(0,1))\big)_n$ is decreasing and converges to $L$. Let $v\in B(0,1)$ with $\|v\|=1$ and such that $A_{i_N}\cdots A_{i_1}v=v$. One has $v\in H^{k N}(K_0)$ for all $k\geqslant0$, thus $v\in L$. It follows $L\neq\{0\}$ and $v\in H^n(B(0,1))$ for all $n\geqslant0$. Therefore,
\begin{gather*}
1=\|v\|\leqslant \rho(L)\leqslant \rho\big(H^n(B(0,1))\big) \leqslant \rho(B(0,1))=1.
\end{gather*}
This yields $H_{\rho}^n(B(0,1))=H^n(B(0,1))$ and $d_n=1$.
\end{proof}

Notice that the two hypotheses imply that $\alpha(A_{i_N}\cdots A_{i_1})=\|A_{i_N}\cdots A_{i_1}\|=1$, hence $\sigma_{{\cal M}}=1$. Notice also that~${\cal M}$ need not to be a~LCP set. One can for example consider matrices with rotations or symmetries.

\begin{Example}\label{exa:limlinboule1}Let us consider the IFS $\{f_1,f_2\}$ where the $f_i\colon {\mathbb R}^2\to {\mathbb R}^2$ are the linear maps given by their canonical matrices
\begin{gather*} A_1 = \left[\begin{matrix}
a & 1 \\
0 & a
\end{matrix}\right] \qquad \text{and}\qquad
A_2 = \left[ \begin{matrix}
a & 0 \\
1 & a
\end{matrix}\right] \end{gather*}
with $a\geqslant 0$. Then, the sequence $\big(H_{\rho}^n(B(0,1))\big)_n$ is decreasing, thus converges to a~set~$L$. Moreover, the point
\begin{gather*}
v=\left(\frac1{\sqrt2}\frac{1+\sqrt{1+4a^2}}{\sqrt{1+4a^2+\sqrt{1+4a^2}}},\frac1{\sqrt2}\frac{2a}{\sqrt{1+4a^2+\sqrt{1+4a^2}}}\right)
\end{gather*}
belongs to $L$ and satisfies $\|v\|=1$. As an example, $L$ is shown in Fig.~\ref{fig:limlinboule1} when $a=1$.
\end{Example}

\begin{figure}[!h]\centering
\includegraphics[scale=0.3]{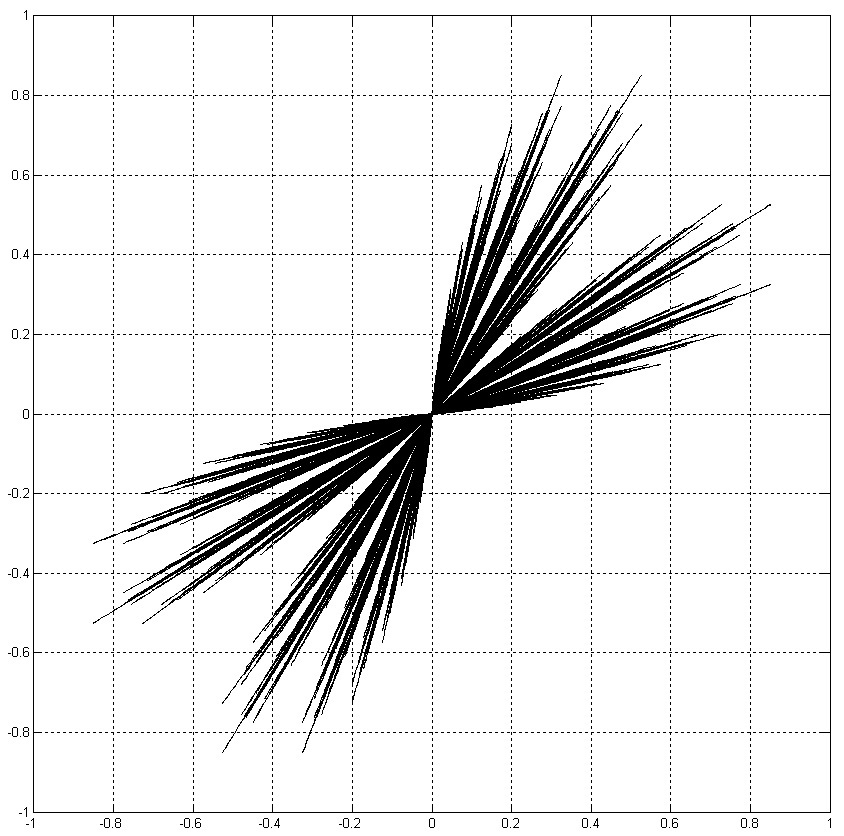}
\caption{The limit set $L$ of $(H_{\rho}^n(B(0,1)))_n$ where $H$ is the Hutchinson operator associated with the IFS $\{f_1,f_2\}$. Maps $f_1$, $f_2$ are given in Example~\ref{exa:limlinboule1} with parameter $a=1$.}\label{fig:limlinboule1}
\end{figure}

\begin{proof} Since $\|A_1\|^2=\|A_2\|^2=a^2+\frac1{2}\big(1+\sqrt{1+4a^2}\big)\geqslant 1$, we cannot apply directly the previous proposition. Thus we have to consider the normalized matrices $A'_i=\frac1{d}A_i$ with $d=\|A_1\|=\|A_2\|$. Notice that we can show here that $d=\sigma_{{\cal M}}$ (see \cite{theys}). Then, hypothesis (i) is satisfied and one checks that the matrix
\begin{gather*}
A'_1 A'_2 = \frac1{a^2+\frac1{2}\big(1+\sqrt{1+4a^2}\big)}\left[\begin{matrix}
1+a^2 & a \\
a & a^2
\end{matrix}\right]
\end{gather*}
has eigenvalue $1$. Hence, using now Proposition~\ref{prop:linboule}, the sequence $\big(H_d^n(B(0,1))\big)_n$ converges to the set
\begin{gather*}
L = \bigcap_{n\geqslant0} H_d^n(B(0,1))
\end{gather*}
with $\rho(L)=1$. Therefore $\big(H_{\rho}^n(B(0,1))\big)_n$ converges to $L$ by Proposition~\ref{prop:linred}. Finally, it follows from the proof of Proposition~\ref{prop:linboule} that~$v$, one of the unit eigenvectors associated with~$1$, belongs to~$L$.
\end{proof}

Let us emphasize that the previous method may be applied generally when two matrices of~${\cal M}$ are symmetric each other, or when one of them is symmetric. In particular we can use it to study the example of Section~\ref{subsec:motivation}.

\subsection[Angular structure of the limit set in ${\mathbb R}^2$]{Angular structure of the limit set in $\boldsymbol{{\mathbb R}^2}$}\label{subsec:polarlim}

Under the hypotheses of Proposition~\ref{prop:linboule}, the set $L$ defined by \eqref{eq:limlinboule} is an eigenset for $H$ associated with the eigenvalue $\sigma_{{\cal M}}=1$. It is straightforward to see that, as the ball $B(0,1)$, the set $L$ is symmetric and star-shaped with respect to the origin, i.e., if $x\in L$ then $rx \in L$ for any $r\in[-1,1]$. Such a property was also discussed in~\cite{barvin11}. More generally, if $K_0$ is symmetric and star-shaped with respect to the origin, then the same holds for every set $H^n(K_0)$ and thus for its limit set $L$ in case of convergence. Therefore, such a limit set $L$ admits a polar representation and it would be especially interesting to know its angular structure. In this section we state a~result in this direction for IFS's in~${\mathbb R}^2$.

Let ${\mathbb P} =\big\{(x,y)\in{\mathbb R}^2 \colon x\geqslant0\big\}\backslash\{0\}$. Every point $(x,y)\in{\mathbb P}$ may be written with polar coordinates as $(R\cos\theta,R\sin\theta)$ with $R>0$ and $\theta\in\big[{-}\tfrac{\pi}{2},\tfrac{\pi}{2}\big]$. Let $K\in\cal{K}$ with $K\neq\{0\}$. Assume moreover that $K$ is symmetric with respect to the origin. Then $K\cap {\mathbb P} \neq\varnothing$ and we can define its \textit{set of slopes} $S_K\subset[-\infty,\infty]$ by
\begin{gather*}\label{eq:defslope}
S_K=\big\{\tan\theta \,|\, \exists\,(R\cos\theta,R\sin\theta)\in K\cap {\mathbb P} \big\}
\end{gather*}
with the convention $\tan\big({\pm}\frac{\pi}{2}\big)=\pm\infty$.

The following result provides a description of the set of slopes of $L$ for particular IFS's.

\begin{Proposition}\label{prop:homo}
Let $p\geqslant1$ linear maps $f_i\colon {\mathbb R}^2 \to {\mathbb R}^2$ given by their canonical matrices
$ A_i = \left[ \begin{smallmatrix}
a_i & b_i \\
c_i & d_i
\end{smallmatrix}\right]$
such that $\det(A_i)\neq 0$ and $f_i({\mathbb P})={\mathbb P}$. Let $K_0\subset{\mathbb R}^2$ be a symmetric and star-shaped with respect to the origin set. Assume that the sequence $\big(H^n(K_0)\big)_n$ converges to a set $L\neq\{0\}$. Then $L$ is symmetric and star-shaped with respect to the origin and its set of slopes $S_L$ is a non-empty invariant set of the Hutchinson operator $\widehat{H}=\bigcup_{i=1}^p \widehat{f}_i$ where $\widehat{f}_i\colon [-\infty,\infty]\to[-\infty,\infty]$ is the homographic function defined by
\begin{gather*}\label{def:homo}
\widehat{f}_i(z)= \frac{d_i z +c_i}{b_i z + a_i}.
\end{gather*}
\end{Proposition}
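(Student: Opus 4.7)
The plan is to proceed in four stages: inherit the symmetry/star-shape of $K_0$ by $L$, note the invariance equation $L = H(L)$, derive the slope transformation formula that gives rise to $\widehat{f}_i$, and finally check both inclusions $\widehat{H}(S_L)\subset S_L$ and $S_L\subset \widehat{H}(S_L)$.

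First I would establish that each $H^n(K_0)$ is symmetric and star-shaped with respect to $0$, using the linearity of the $f_i$'s. Indeed, if $K$ is symmetric and star-shaped with respect to $0$ then so is every $f_i(K)$ (if $x=f_i(y)$ with $y\in K$, then $-y\in K$ gives $-x=f_i(-y)$, and $ry\in K$ for $r\in[-1,1]$ gives $rx=f_i(ry)$), and these two properties are preserved by finite unions, hence by $H$. Passing to the Hausdorff limit preserves both properties: for symmetry the map $K\mapsto -K$ is $\dha$-continuous, and for the star-shape one argues that if each $x\in L$ is a limit of $x_n\in H^n(K_0)$ then $rx$ is the limit of $rx_n\in H^n(K_0)$, so $rx\in L$. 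Continuity of $H$ with respect to $\dha$ together with $K_{n+1}=H(K_n)$ yields the self-reproducing identity $L=H(L)=\bigcup_{i=1}^p f_i(L)$.

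Next I would record the key algebraic computation. For a point $(x,y)=(R\cos\theta,R\sin\theta)\in\mathbb{P}$ with slope $z=\tan\theta\in[-\infty,\infty]$ (with the usual convention at $\pm\pi/2$), a direct computation gives
\begin{gather*}
f_i(x,y) = (a_ix+b_iy,\, c_ix+d_iy),
\end{gather*}
whose slope, when one divides top and bottom by $x$ (or handles $z=\pm\infty$ by passing to the limit), equals $(c_i+d_iz)/(a_i+b_iz)=\widehat{f}_i(z)$. The hypothesis $\det(A_i)\neq 0$ ensures that $f_i(x,y)\neq 0$ and that $\widehat{f}_i$ is well defined on $[-\infty,\infty]$ as a homeomorphism. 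The hypothesis $f_i(\mathbb{P})=\mathbb{P}$, combined with $\det(A_i)\neq 0$, says that $f_i$ restricts to a bijection $\mathbb{P}\to\mathbb{P}$, so in particular $f_i^{-1}(\mathbb{P})=\mathbb{P}$.

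Finally, I would prove both inclusions. For $\widehat{H}(S_L)\subset S_L$, pick $s=\widehat{f}_i(t)$ with $t\in S_L$, so $t$ is the slope of some $q\in L\cap\mathbb{P}$; then $f_i(q)\in f_i(L)\subset L$ and $f_i(q)\in f_i(\mathbb{P})=\mathbb{P}$, and the slope of $f_i(q)$ is $\widehat{f}_i(t)=s\in S_L$. For $S_L\subset\widehat{H}(S_L)$, pick $s\in S_L$, realised as the slope of some $p\in L\cap\mathbb{P}$. Since $L=\bigcup_i f_i(L)$, write $p=f_i(q)$ with $q\in L$ and $i\in\{1,\dots,p\}$; then $q=f_i^{-1}(p)\in f_i^{-1}(\mathbb{P})=\mathbb{P}$ by the bijection observation, so $q\in L\cap\mathbb{P}$ with slope $t\in S_L$ satisfying $\widehat{f}_i(t)=s$. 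Non-emptiness is immediate: $L\neq\{0\}$ and $L$ symmetric imply $L\cap\mathbb{P}\neq\varnothing$. The only mild technical point — and the place I expect to be the most careful — is the bookkeeping of the conventions when $z=\pm\infty$ or $b_iz+a_i=0$, which amounts to verifying that $\widehat{f}_i$ extends continuously to the compactified slope space $[-\infty,\infty]$ and matches the computation of the slope of $f_i(x,y)$ on the vertical axis.
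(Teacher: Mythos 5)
Your proposal is correct and follows essentially the same route as the paper: the invariance $L=H(L)$, the identity $H(L)\cap{\mathbb P}=H(L\cap{\mathbb P})$ (which in the paper is obtained from $f_i({\mathbb P})={\mathbb P}$ together with the injectivity coming from $\det(A_i)\neq0$, exactly the facts you use for your two inclusions), and the polar computation showing that $f_i$ acts on slopes by $\widehat{f}_i$. Your version merely unpacks the paper's chain of set equalities into the two inclusions $\widehat{H}(S_L)\subset S_L$ and $S_L\subset\widehat{H}(S_L)$, and spells out the inheritance of symmetry and star-shapedness that the paper disposes of in the discussion preceding the proposition.
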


\begin{proof} Notice that $S_L$ is well-defined. Writing $L\cap{\mathbb P}=\{(R\cos\theta,R\sin\theta) \colon R>0, \theta\in\Theta\}$ we get $S_L=\tan\Theta$. Observing that $L \cap {\mathbb P} = H(L)\cap {\mathbb P}$, we have $S_L=S_{H(L)}$. We deduce now from the hypothesis that
\begin{gather*}
H(L) \cap {\mathbb P} =\bigcup_{i=1}^p \big( f_i(L) \cap {\mathbb P} \big) = \bigcup_{i=1}^p \big( f_i(L) \cap f_i({\mathbb P}) \big)
= \bigcup_{i=1}^p f_i(L\cap {\mathbb P}) = H(L\cap {\mathbb P}).
\end{gather*}
Hence $H(L) \cap {\mathbb P}$ is the set of the points $f_i(s)$ for all $s\in L\cap {\mathbb P}$ and $i\in\{1,\ldots,p\}$. Since
\begin{gather*}
f_i(s)=A_i \left[\begin{matrix}
 R\cos \theta \\
 R\sin \theta
 \end{matrix}\right] = R\left[\begin{matrix}
 a_i\cos \theta + b_i\sin \theta \\
 c_i\cos \theta + d_i\sin \theta
 \end{matrix}\right],
\end{gather*}
the set of slopes of $H(L)$ is exactly the set of all the points of the form
\begin{gather*}
\frac{c_i\cos \theta + d_i\sin \theta}{a_i\cos \theta + b_i\sin \theta} = \widehat{f}_i(\tan \theta),\qquad \theta\in\Theta\qquad \text{and}\qquad i\in\{1,\ldots,p\}.
\end{gather*}
Therefore, $S_{H(L)}=\bigcup_{i=1}^p \widehat{f}_i(\tan\Theta)$. The results follows.
\end{proof}

The angular structure of~$L$ is then known as soon as we can describe the invariant sets of the Hutchinson operator~$\widehat{H}$. This operator may have several invariant sets~$S$, not necessarily closed. However, there is a useful way to determine them. Indeed, if $\widehat{H}$ is contractive then every bounded invariant set~$S$ of $\widehat{H}$ satisfies $\cl(S)=\widehat{L}$ where $\widehat{L}$ is the attractor of $\widehat{H}$. It is possible to determine such attractors of IFS made up with homographic functions (see for example~\cite[p.~136]{falco}). Notice that $[-\infty,\infty]$ is always a compact set and then would be an invariant set. Thus, it will be often necessary to consider a~restriction of $\widehat{H}$ to obtain a contractive operator.

\begin{Example}\label{exa:limcantor}Let us consider the IFS $\{f_1,f_2\}$ where the $f_i\colon {\mathbb R}^2\to {\mathbb R}^2$ are the linear maps given by their canonical matrices
\begin{gather*} A_1 = \left[\begin{matrix}
1 & 0 \\
0 & a
\end{matrix}\right] \qquad \text{and}\qquad A_2 = \left[ \begin{matrix}
\dfrac{b}{1-a} & 0 \\
b & \dfrac{a\, b}{1-a}
\end{matrix}\right] \end{gather*}
with $0<a<1$, $0<b<1$ and $a+b\leqslant1$. Starting from the unit square $K_0=[-1,1]\times[-1,1]$, the sequence $\big(H^n(K_0)\big)_n$ converges to the set $L=\bigcap_{n\geqslant0}H^n(K_0)$. Its set of slopes satisfies $\cl(S_L)=\Gamma_a$ if $0<a<\frac1{2}$ and $\cl(S_L)=[0,1]$ if $\frac1 2 \leqslant a\leqslant 1$ (see Fig.~\ref{fig:limcantor}(a)). Moreover, if $b=1-a$ then $S_L=\cl(S_L)$ (see Fig.~\ref{fig:limcantor}(b)).
\end{Example}

\begin{figure}[!h]\centering
\begin{minipage}{75mm}\centering
\includegraphics[scale=0.3]{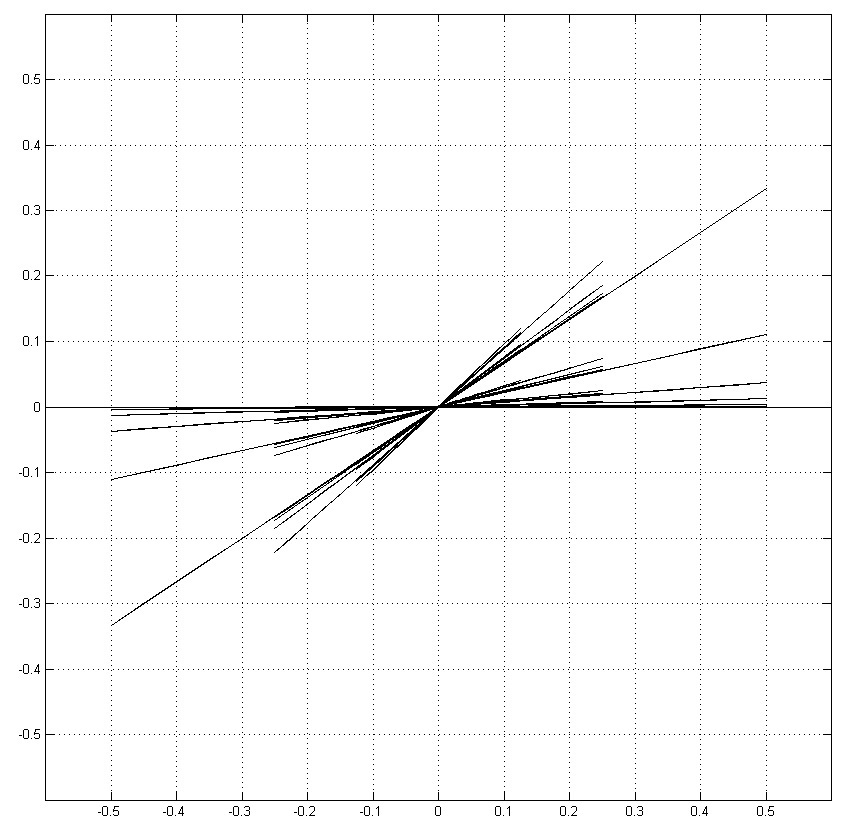}\\
{\footnotesize (a) $\cl(S_L)=\Gamma_{\frac1{3}}$}
\end{minipage}\qquad
\begin{minipage}{75mm}\centering
\includegraphics[scale=0.3]{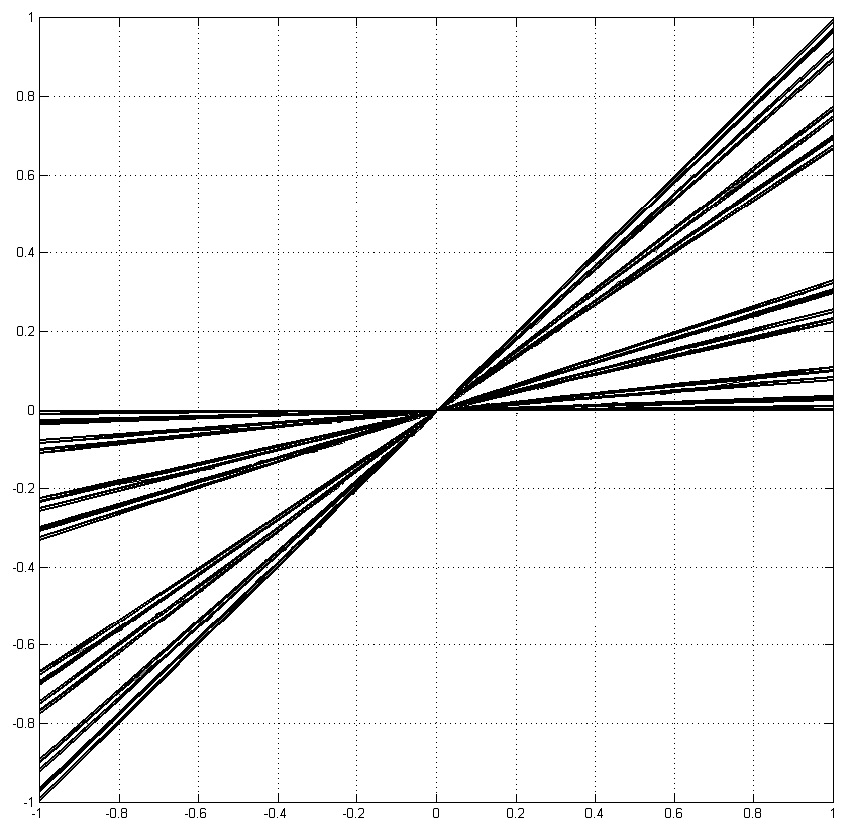}\\
{\footnotesize (b) $S_L=\Gamma_{\frac1{3}}$}
\end{minipage}
\caption{The limit set $L$ of $\big(H^n(K_0)\big)_n$ where $H$ is the Hutchinson operator associated with the IFS $\{f_1,f_2\}$. The starting set is the unit square $[-1,1]\times[-1,1]$ and maps $f_1$, $f_2$ are given in Example~\ref{exa:limcantor}. In (a) parameters are $a=b=\frac1{3}$ and the $S_L$ is dense in the triadic Cantor set. In (b) parameters are $a=\frac1{3}$, $b=\frac{2}{3}$ and $S_L$ is the triadic Cantor set.}\label{fig:limcantor}
\end{figure}

\begin{proof} One checks that $H(K_0)\subset K_0$ so that the sequence $\big(H^n(K_0)\big)_n$ converges to the given set $L$. Moreover all the hypothesis on the $A_i's$ of Proposition~\ref{prop:homo} are satisfied. For example $(1,0)\in H^n(K_0)$ for all $n\geqslant0$ so that $L\neq\{0\}$. Hence the set of slopes $S_L$ is an invariant set of the operator $\widehat{H}=\widehat{f}_1\cup\widehat{f}_2$ where $\widehat{f}_1(z)=az$ and $\widehat{f}_2(z)=az+(1-a)$. Since $\|f_1(0,y)\|<1$ and $\|f_2(0,y)\|<1$ for all $y\neq0$, one has $\Span\{(0,1)\}\cap L = \{0\}$. It follows that $S_L\subset{\mathbb R}$. Since $\widehat{H}$ is contractive on ${\cal K}_{{\mathbb R}}$ one obtains $\cl(S)=\Gamma_a$ if $0<a<\frac1{2}$ and $\cl(S)=[0,1]$ if $\frac1{2}\leqslant a <1$.

Assume now that $b=1-a$. To obtain a more precise description of the limit set $L$ we rather apply Theorem~\ref{theo:linsup} with $V=\Span\{(1,0)\}$ and $W=\Span\{(0,1)\}$. Let $z_0=(x_0,y_0) \in K_0$. We have $p_{V,W}(z_0)=(x_0,0)$. It follows that $\big(H^n(K_0)\big)_n$ converges to the set $L=\cl\big(\bigcup_{x_0 \in [-1,1]} (x_0,0)+ L(x_0)\big)$ where $L(x_0)$ is the attractor of the IFS $\big\{\widetilde{f}_1,\widetilde{f}_2\big\}$ with, for all $w=(0,y)\in W$,
\begin{gather*}
\widetilde{f}_1(w)=\widetilde{f}_1(0,y)=(0,ay)\qquad \text{and }\qquad \widetilde{f}_2(w)=\widetilde{f}_2(0,y)=(0,ay+(1-a)x_0).
\end{gather*}
By uniqueness, this attractor is $L(x_0) = (0,x_0 L_a)$ with $L_a=\Gamma_a$ if $a<\frac1{2}$ and $L_a=[0,1]$ if $a\geqslant\frac1{2}$. The limit set is then{\samepage
\begin{gather*}
L=\cl\left(\bigcup_{x_0 \in [-1,1]} (x_0,x_0 L_a) \right)
\end{gather*}
and we directly see that $S_L=L_a$.}
\end{proof}

The fact that in Example~\ref{exa:limcantor} the set $S_L$ is not always the whole Cantor set $\Gamma_a$ comes from the function~$f_2$. When $b\neq 1-a$, $f_2$ is a contraction. Thus, all the orbits of points $z\in B(0,1)$ associated with~$f_2$ infinitely many times correspond to a~`slope' $s=\tan \theta$ for which $R=R(\theta)=0$. These slopes are then not visible in the limit set.

\section{Other renormalizations}\label{sec:general}

We have renormalized the sets $H^n(K_0)$ by dividing them by their radius. As we mentioned in the introduction, one usually rather uses the diameter
to rescale a sequence of compact sets. Thus, we can wonder what a such renormalization would yield. More generally, we want to study in this section the iteration of more general operators $H_{\fhi}$ which will provide various ways to approximate the solutions of the eigen-equation $H(K)=dK$. We keep using the sequence~$(K_n)_n$ but choosing a well-adapted operator according to the form of the matrices $A_i$'s.

\subsection{Renormalization with a size function}\label{subsec:renormsize}

We are interested in functions $\fhi$ that describe the size of a compact set and the way it occupies the space. Following the example of the radius function $\rho$, we will say that a function $\fhi\colon {\cal K} \rightarrow [0,+\infty)$ is a \textit{size function} if it is continuous with respect to $\dha$, monotonic and homogeneous (see Section~\ref{subsec:strategy}). For example, the max-radius function $\rho_{\infty}$ and the diameter $\delta$ respectively defined on ${\cal K}$ by
\begin{gather*}\label{def:maxradiusdiam}
\rho_{\infty}(K)= \max_{(x_1,\ldots,x_D)\in K}\{|x_j|\colon 1\leqslant j \leqslant D\}\qquad\text{and}\qquad \delta(K)= \max\{\|x-y\|\colon x,y\in K\}
\end{gather*}
are two size functions.

We define an associated operator $H_{\fhi}$ in the same way as \eqref{def:normhutch} setting
\begin{gather*}
\forall\,K\in{\cal K},\qquad H_{\fhi}(K)= \frac1{\fhi(H(K))} H(K)
\end{gather*}
and consider the $H_{\fhi}$-orbit of some set $K_0\in{\cal K}$. Let us keep all the notation of the previous sections, easily adapted by replacing $\rho$ with~$\fhi$. In particular $K_n=H_{\fhi}^n(K_0)$ (see~\eqref{def:normhutchiter}) and $d_n=\fhi\big(\bigcup_{i=1}^p f_i(K_n)\big)$ (see the first equality in~\eqref{eq:defdn}). We will assume that $K_n$ is always well-defined, i.e., $\fhi(K_n)\neq 0$.

We are still interested in the convergence of the sequence $(K_n)_n$ and the description of its limit. The key-points are Theorem~\ref{theo:renorm} and Lemma~\ref{prop:linred} which provide general conditions of convergence. We summarize here the main results which still hold for any size functions.

\begin{Theorem}\label{theo:cvgfhi}Let $p\geqslant1$ affine maps $f_i\colon {\mathbb R}^D\rightarrow {\mathbb R}^D$ of the form $f_i(x)=A_i x + b_i$.
\begin{enumerate}\itemsep=0pt
\item[$(i)$] If $(d_n)_n$ converges to $d>\lambda_H$, then $(K_n)_n$ converges to the attractor $L_d$ and $d$ satisfies the inequality
\begin{gather*}\label{eq:ineqfhi}
\fhi\big(\big\{(d\id -A_1)^{-1}b_1,\ldots,(d\id -A_p)^{-1}b_p\big\}\big)\leqslant 1.
\end{gather*}
\item[$(ii)$] Assume that $b_i=0$ for all $i\in\{1,\ldots,p\}$. If $\big(H^n(K_0)\big)_n$ converges to a set $L\in{\cal K}$ such that $\fhi(L)\neq 0$, then $(K_n)_n$ converges to $K=\frac1{\fhi(L)}L$.
\end{enumerate}
\end{Theorem}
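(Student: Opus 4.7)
The plan is to reduce both parts to results already established in the paper, leveraging the observation that the three axiomatic properties of a size function---continuity, monotonicity, and homogeneity---are all that was actually used in the corresponding arguments for $\rho$.

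For part $(i)$, the first step is to invoke Theorem~\ref{theo:renorm} directly. As pointed out immediately after its proof, that argument never uses how $d_n$ is defined nor the affine form of the $f_i$'s; it only relies on the recursion $K_{n+1}=\frac1{d_n}H(K_n)$ together with the convergence $d_n\to d>\lambda_H$. Hence $K_n\to L_d$. Next I would establish that $\fhi(L_d)=1$: by homogeneity, $\fhi(K_{n+1})=\frac1{d_n}\fhi(H(K_n))=1$ for every admissible $n$, and continuity of $\fhi$ with respect to $\dha$ passes this to the limit. Finally, since $d>\lambda_H$, each matrix $d\id-A_i$ is invertible (exactly as in the proof of Proposition~\ref{prop:limerho}), and the point $z_i=(d\id-A_i)^{-1}b_i$ is the unique fixed point of $\frac1{d}f_i$, hence belongs to $L_d$. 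Monotonicity of $\fhi$ then yields $\fhi(\{z_1,\ldots,z_p\})\leqslant\fhi(L_d)=1$.

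For part $(ii)$, I would mimic the proof of Proposition~\ref{prop:linred}. With $b_i=0$ for all $i$, the operator $H$ is positively homogeneous of degree one on ${\cal K}$, so an easy induction yields
\begin{gather*}
K_n=\frac1{d_0\cdots d_{n-1}}\,H^n(K_0).
\end{gather*}
Applying $\fhi$ to both sides and using both its homogeneity and the identity $\fhi(K_n)=1$ for $n\geqslant1$, one obtains $\fhi(H^n(K_0))=d_0\cdots d_{n-1}$. By hypothesis $H^n(K_0)\to L$, hence continuity of $\fhi$ forces $d_0\cdots d_{n-1}\to\fhi(L)\neq0$. Substituting back into the displayed formula and passing to the limit gives $K_n\to\frac1{\fhi(L)}\,L$, as claimed.

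The main obstacle is conceptual rather than technical: one must recognize that the proofs of Theorem~\ref{theo:renorm}, Proposition~\ref{prop:limerho}, and Proposition~\ref{prop:linred} depend only on the three size-function axioms, and in particular that the stability property~\eqref{eq:rhostab}, highlighted earlier as a special feature of $\rho$, plays no role in either statement here. Once this is verified, each step above is essentially a verbatim transcription of the argument for~$\rho$.
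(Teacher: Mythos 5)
Your proposal is correct and is essentially the proof the paper intends: Theorem~\ref{theo:cvgfhi} is stated there without an explicit argument, precisely because parts $(i)$ and $(ii)$ are meant to follow by rerunning the proofs of Theorem~\ref{theo:renorm}, Proposition~\ref{prop:limerho} and Proposition~\ref{prop:linred} using only continuity, monotonicity and homogeneity of the size function. Your explicit verification that the stability property~\eqref{eq:rhostab} is never needed (and that $\fhi(K_n)=1$, hence $\fhi(L_d)=1$, still holds by homogeneity and continuity) is exactly the point the paper leaves to the reader.
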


In particular, when all the $f_i$'s are linear, the choice of $\fhi$ is not important. Therefore, all the results of Section~\ref{sec:linear} providing the convergence of the sequence $\big(H^n(K_0)\big)_n$ (e.g., Lemma~\ref{lem:cvglcp}, Lemma~\ref{lem:limidlin}, Theorem~\ref{theo:linsup}) and the description of its limit $L$ (Proposition~\ref{prop:homo}) may be used. In case of convergence, the limit set $K$ will only depend on $\fhi$ through the scaling factor~$\fhi(L)$.

\subsection{Renormalization with the max-radius function}\label{subsec:maxradius}

We consider here the size function $\rho_{\infty}$, which is nothing but the `radius function' associated with the usual maximum norm $\|\cdot\|_{\infty}$. In particular, Property~\eqref{eq:rhostab}, Lemma~\ref{lem:bounddn} and Proposition~\ref{prop:majd} still work \textit{mutatis mutandis}. Therefore it should be possible to obtain a result similar to Theorem~\ref{theo:radius}. Since the Euclidean norm is isotropic we assumed in Theorem~\ref{theo:radius} the $A_i$'s were homotheties. Here, the maximum norm allows us to deal with more general diagonal matrices. However, many complicated particular situations may happen. For the sake of simplicity, we only state a simpler result with an additional hypothesis avoiding these special behaviors.

\begin{Proposition}\label{theo:maxradius} Let $p\geqslant1$ maps $f_i\colon {\mathbb R}^D\rightarrow {\mathbb R}^D$ of the form $f_i(x)=A_i x+b_i$ where $A_i=\diag(a_{i,1},\ldots,a_{i,D})$ is a diagonal matrix with non-negative entries and $b_i=(b_{i,1},\ldots,b_{i,D})$. Let $K_0\in{\cal K}$. Assume that
\begin{gather}\label{eq:theomaxradius}
\max_{\substack{1\leqslant i \leqslant p \\ 1\leqslant j \leqslant D}}\left\{|a_{i,j}x_j+ b_{i,j}|\colon x=(x_1,\ldots,x_D)\in H_{\rho_{\infty}}(K_0)\right\}
> \max_{\substack{1\leqslant i \leqslant p \\ 1\leqslant j \leqslant D}}\big\{|a_{i,j}|\big\}.
\end{gather}
Then, the sequence $\big(H_{\rho_{\infty}}^n(K_0)\big)_n$ converges to $L_d$ where $d=\lim\limits_{n\to\infty} d_n$.
\end{Proposition}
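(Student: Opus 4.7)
The plan is to mimic the strategy used in Lemma~\ref{lem:limefhid} and Theorem~\ref{theo:radius} for homotheties in the Euclidean setting, but with $\|\cdot\|_\infty$ replacing $\|\cdot\|$, so as to prove that $d:=\lim d_n$ exists and satisfies $d>\lambda_H$; once this is in place, Theorem~\ref{theo:cvgfhi}(i) will close the proof. Since every $A_i$ is diagonal with non-negative entries, its operator norm (for both the Euclidean and the max-norm) equals $\max_j a_{i,j}$, so $\lambda_H=\max_{i,j}a_{i,j}$. The right-hand side of \eqref{eq:theomaxradius} is therefore exactly $\lambda_H$, while the left-hand side is $\rho_\infty(H(K_1))=d_1$; hence \eqref{eq:theomaxradius} reads simply $d_1>\lambda_H$.

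The central step will be to show that $(d_n)_{n\geqslant 1}$ is non-decreasing. At step $n$, I pick $y_n\in K_n$ and indices $i_n$, $j_n$ realizing $d_n=|a_{i_n,j_n}(y_n)_{j_n}+b_{i_n,j_n}|$. Then $u_n:=f_{i_n}(y_n)/d_n$ lies in $K_{n+1}$ and its $j_n$-th coordinate equals the sign $\sigma\in\{-1,+1\}$ of $a_{i_n,j_n}(y_n)_{j_n}+b_{i_n,j_n}$. Consequently $d_{n+1}\geqslant|\sigma\,a_{i_n,j_n}+b_{i_n,j_n}|$, and it remains to verify that $|\sigma\,a_{i_n,j_n}+b_{i_n,j_n}|\geqslant d_n$ via a short sign case analysis using $a_{i_n,j_n}\geqslant 0$ and $|(y_n)_{j_n}|\leqslant 1$. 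Granted monotonicity, the upper bound of Lemma~\ref{lem:bounddn} transposes verbatim to $\rho_\infty$ (replace $B(0,1)$ by the $\|\cdot\|_\infty$-unit ball, and $\|\cdot\|$ by $\|\cdot\|_\infty$ throughout), so $(d_n)$ is bounded and converges to some $d\geqslant d_1>\lambda_H$. Theorem~\ref{theo:cvgfhi}(i), the general-size-function analogue of Theorem~\ref{theo:renorm}, then delivers the convergence of $(K_n)_n$ to the attractor $L_d$ of $\{f_1/d,\ldots,f_p/d\}$.

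The only non-routine piece is the sign case analysis establishing monotonicity. Writing $\alpha=a_{i_n,j_n}\geqslant 0$, $b=b_{i_n,j_n}$ and $t=(y_n)_{j_n}\in[-1,1]$ (using $\rho_\infty(K_n)=1$), when $\sigma=+1$, i.e., $\alpha t+b>0$, the required inequality $|\alpha+b|\geqslant\alpha t+b$ follows from $\alpha+b\geqslant\alpha t+b>0$, since the sub-case $\alpha+b<0$ would force $t>1$; when $\sigma=-1$, i.e., $\alpha t+b<0$, the sub-case $b\geqslant\alpha$ is incompatible with $\alpha t+b<0$ and $t\geqslant-1$, hence is void, while the sub-case $b<\alpha$ reduces the inequality $|{-}\alpha+b|\geqslant-\alpha t-b$ to $\alpha(1+t)\geqslant 0$. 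Everything else is a straightforward transcription of the arguments of Section~\ref{subsec:radhomo}.
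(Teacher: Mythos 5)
Your proposal is correct and follows essentially the same route as the paper: both reduce the hypothesis to $d_1>\lambda_H$, prove $(d_n)_{n\geqslant 1}$ is non-decreasing by pushing the maximizing point $u_n=f_{i_n}(y_n)/d_n$ (whose $j_n$-th coordinate has modulus $1$) one more step, bound $(d_n)$ via the $\rho_\infty$-analogue of Lemma~\ref{lem:bounddn}, and conclude with the renormalization theorem. The only cosmetic difference is that the paper packages your sign case analysis into a single reverse-triangle-inequality estimate $|a_{i_n,j_n}u_{n,j_n}+b_{i_n,j_n}|\geqslant (a_{i_n,j_n}+d_n)|u_{n,j_n}|-a_{i_n,j_n}|x_{n,j_n}|\geqslant d_n$.
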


\begin{proof} The proof is very similar to that of Theorem \ref{theo:radius}. First we claim that $(d_n)_{n\geqslant 1}$ always converges. Indeed, for all $n\geqslant 1$ we can find $x_n\in K_n$, $i_n\in\{1,\ldots,p\}$ and $j_n\in\{1,\ldots,D\}$ such that $d_n=|a_{i_n,j_n} x_{n,j_n} + b_{i_n,j_n}|$. Then, $u_n=\frac1{d_n}(A_{i_n} x_n + b_{i_n})$ satisfies $u_n\in K_{n+1}$ and $|u_{n,j_n}|=1$. Since $|x_{n,j_n}|\leqslant 1$ we obtain
\begin{align*}
d_{n+1}\geqslant \max_{1\leqslant j \leqslant D}\{|a_{i_n,j} u_{n,j} + b_{i_n,j}|\}
& \geqslant \max_{1\leqslant j \leqslant D} \{|a_{i_n,j} u_{n,j} + d_n u_{n,j}| - |d_n u_{n,j} - b_{i_n,j}|\} \\
& \geqslant |a_{i_n,j_n} u_{n,k_n} + d_n u_{n,j_n}| - |d_n u_{n,j_n} - b_{i_n,j_n}| \\
& \geqslant (a_{i_n,j_n} + d_n)|u_{n,j_n}| - a_{i_n,j_n}|x_{n,j_n}| \\
& \geqslant d_n + a_{i_n,j_n}(1-|x_{n,j_n}|) \geqslant d_n.
\end{align*}
Hence, $(d_n)_{n\geqslant1}$ is increasing and bounded so it converges to a number $d\geqslant0$. Notice that the left-hand side of \eqref{eq:theomaxradius} is $d_1$ and the right-hand side of \eqref{eq:theomaxradius} is $\lambda_H$. Thus one has $d\geqslant d_1 >\lambda_H$ and the result follows from Theorem \ref{theo:renorm}.
\end{proof}

\subsection{Renormalization with the diameter function}\label{subsec:diam}

We consider now the diameter function $\delta$. The situation is more complicated than the previous ones, even if the matrices $A_i$'s are homotheties. The stability property \eqref{eq:rhostab} of $\rho$ was a key-point in the proof of Theorem \ref{theo:radius} but unfortunately it is no longer satisfied. We will only deal with the one dimensional case $D=1$.

\begin{Proposition}\label{prop:diamD1}Let $p\geqslant1$ affine maps $f_i\colon {\mathbb R}\rightarrow {\mathbb R}$ of the form $f_i(x)=\alpha_i x + b_i$ with $\alpha_i\geqslant 0$. Let us define the function
\begin{gather}\label{eq:F}
F(x)= \frac{\min\limits_{1\leqslant i\leqslant p} \{\alpha_ix+b_i\}}{\max\limits_{1\leqslant i\leqslant p}\{\alpha_i(x+1)+b_i\} -\min\limits_{1\leqslant i\leqslant p}\{\alpha_ix+b_i\}} .
\end{gather}
Assume that the sequence $\big(F^n(u)\big)_n$ starting at $u=\min(H_{\delta}(K_0))$ converges to a number $c\in{\mathbb R}$. Then,
\begin{enumerate}\itemsep=0pt
\item[$(i)$] $(d_n)_n$ converges to $d=\max\limits_{1\leqslant i\leqslant p}\{\alpha_i(c+1)+b_i\}- \min\limits_{1\leqslant i\leqslant p}\{\alpha_i c+b_i\}$,
\item[$(ii)$] If $d> \max\limits_{1\leqslant i \leqslant p} \{\alpha_i\}$ then $\big(H_{\delta}^n(K_0)\big)_n$ converges to $L_d$ whose convex hull is $[c,c+1]$.
\end{enumerate}
\end{Proposition}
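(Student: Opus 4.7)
The plan is to reduce the set-valued iteration to the scalar dynamical system governed by $F$ by tracking the left endpoint $u_n=\min K_n$ of $K_n=H_{\delta}^n(K_0)$, and then to invoke Theorem~\ref{theo:renorm} for the convergence of $(K_n)_n$ itself.

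First I would observe that, for every $n\geqslant 1$, the compact set $K_n\subset{\mathbb R}$ satisfies $\delta(K_n)=1$, so $\max K_n=u_n+1$ and both extrema are attained in $K_n$. Since every $\alpha_i\geqslant 0$, each $f_i$ is non-decreasing on ${\mathbb R}$, whence
\begin{gather*}
\min f_i(K_n)=\alpha_i u_n+b_i,\qquad \max f_i(K_n)=\alpha_i(u_n+1)+b_i.
\end{gather*}
Using that $\min$ and $\max$ commute with finite unions in ${\mathbb R}$, this yields
\begin{gather*}
\min H(K_n)=\min_{1\leqslant i\leqslant p}\{\alpha_i u_n+b_i\},\qquad \max H(K_n)=\max_{1\leqslant i\leqslant p}\{\alpha_i(u_n+1)+b_i\},
\end{gather*}
so $d_n=\delta(H(K_n))=\max H(K_n)-\min H(K_n)$ is exactly the denominator appearing in \eqref{eq:F} at $u_n$. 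Dividing by $d_n$ gives $u_{n+1}=F(u_n)$, and since $u_1=\min K_1=\min H_{\delta}(K_0)=u$, this unfolds into $u_n=F^{n-1}(u)$ for every $n\geqslant 1$.

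Part~$(i)$ then follows at once: the hypothesis forces $u_n\to c$, and continuity of the map $x\mapsto\max_{1\leqslant i\leqslant p}\{\alpha_i(x+1)+b_i\}-\min_{1\leqslant i\leqslant p}\{\alpha_i x+b_i\}$ yields $d_n\to d$. For part~$(ii)$, the assumption $d>\lambda_H$ triggers Theorem~\ref{theo:renorm}, giving the Hausdorff convergence $K_n\to L_d$. The convex-hull description comes from the inclusions $K_n\subset[u_n,u_n+1]$ with both endpoints realized in $K_n$: passing to the Hausdorff limit yields $c,c+1\in L_d$ and $L_d\subset[c,c+1]$, hence $\ch(L_d)=[c,c+1]$.

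The argument is largely bookkeeping once the reduction $u_{n+1}=F(u_n)$ is in place. The main subtlety I anticipate is the implicit non-degeneracy: each $d_n>0$ so that $K_{n+1}$ is well-defined and each iterate $F^n(u)$ makes sense. This is encoded in the standing hypothesis that the $H_{\varphi}$-orbit exists, and it propagates to the limit because $d>\lambda_H\geqslant 0$ ensures that the denominator of $F(c)$ is positive.
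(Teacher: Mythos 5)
Your proposal is correct and follows essentially the same route as the paper: both reduce the dynamics to the left endpoint $a_n=\min K_n$ of $\ch(K_n)=[a_n,a_n+1]$, verify $a_{n+1}=F(a_n)$ and $d_n=d(a_n)$ using the monotonicity of the $f_i$ (from $\alpha_i\geqslant 0$), and then conclude via continuity and Theorem~\ref{theo:renorm}. You merely spell out the details the paper compresses into ``one checks that,'' including the endpoint-attainment argument for $\ch(L_d)=[c,c+1]$ and the non-degeneracy of the $d_n$.
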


\begin{proof} Let us write $\ch(K_n)=[a_n,a_n +1]$ for any $n\geqslant1$.

(i) Let $d(x)=\max\limits_{1\leqslant i\leqslant p}\{\alpha_i (x+1)+b_i\}-\min\limits_{1\leqslant i\leqslant p}\{\alpha_i x+b_i\}$. Since $\alpha_i\geqslant 0$ one checks that $d_n=d(a_n)$ and $a_{n+1}=F(a_n)$ where $F$ is given by \eqref{eq:F}. The result follows.

(ii) Since $\lambda_H=\max\limits_{1\leqslant i\leqslant p}\{\alpha_i\}$ the result is obtained by applying Theorem~\ref{theo:renorm}. Finally, since $\ch(L_d)=\lim\limits_{n\to\infty}\ch(K_n)$, we get the last part of the assertion.
\end{proof}

This Proposition gives a very simple and practical tool to prove the convergence of~$(K_n)_n$. It is enough to study the orbits of $F$ which is just a piecewise homographic function.

\begin{Example}\label{exa:limcantorD1} Let us consider the IFS $\{f_1,f_2,f_3\}$ where the $f_i\colon {\mathbb R}\to {\mathbb R}$ are given by
\begin{gather*}
f_1(x)=2x+1 ,\qquad f_2(x)=3x-4 \qquad \text{and}\qquad f_3(x)=x+2 .
\end{gather*}
Then, for all $K_0\in {\cal K}$, the sequence $(K_n)_n$ converges to the attractor $L_d$ with $d=5+2\sqrt{3}$. In particular its convex hull is the interval $\big[1-\sqrt{3},2-\sqrt{3}\big]$.
\end{Example}

\begin{proof} First we determine the function $F$. We obtain
\begin{gather*}
F(x) = \left(\frac{3x-4}{7-2x}\right)\indic_{(-\infty,0]}(x) + \left(\frac{3x-4}{7-x}\right)\indic_{(0,3]}(x)\\
\hphantom{F(x) =}{} + \left(\frac{x+2}{x+1}\right)\indic_{(3,4]}(x) + \left(\frac{x+2}{2x-3}\right)\indic_{(4,+\infty)}(x).
\end{gather*}
The only invariant point of $F$ is $c=1-\sqrt{3}$ and one checks that $(F^n(u))_n$ converges to $c$ for every $u\in {\mathbb R}$. Thus $(d_n)_n$ converges to $d=f_3(c+1)-f_2(c)=5+2\sqrt{3}$. Since $d>\max\{\alpha_1,\alpha_2,\alpha_3\}=5$, the result follows from Proposition~\ref{prop:diamD1}.
\end{proof}

\begin{Example}\label{exa:diamtrans} Let $p\geqslant1$ affine maps $f_i\colon {\mathbb R}\rightarrow {\mathbb R}$ of the form $f_i(x)=\alpha x + b_i$ with $\alpha\geqslant0$ and $b_1<\cdots <b_p$. Then, for all $K_0\in {\cal K}$, the sequence $(K_n)_n$ converges to $L_d$ with $d=\alpha+(b_p-b_1)$.
\end{Example}

\begin{proof} We get
\begin{gather*}
F(x) = \frac{\alpha x +b_1}{\alpha+(b_p-b_1)}.
\end{gather*}
Thus $F$ is a contraction, $(a_n)_{n\geqslant1}$ converges to the invariant point $c=\frac{b_1}{b_p-b_1}$ and $(d_n)_{n\geqslant1}$ is constant to $d= \alpha+(b_p-b_1)$. Since $d>\alpha=\lambda_H$, the result follows from Proposition \ref{prop:diamD1}.
\end{proof}

\subsection*{Acknowledgements}

The present paper has been completed during the thematic research semester \textit{Fractal Geometry and Dynamics} organized in the fall of 2017 at the Institut Mittag-Leffler, Stockholm, Sweden. The author is very grateful to the organizers for their warm welcome during its stay at the Institute. This work is partially supported by the French research group `Analyse Multifractale' (CNRS-GDR3475).

\pdfbookmark[1]{References}{ref}
\LastPageEnding

\end{document}